\newtheorem*{ta}{Theorem A}
\newtheorem*{pb}{Proposition B}
\newtheorem*{conj}{Conjecture}
\newtheorem{theorem}{Theorem}[section]
\newtheorem{lemma}[theorem]{Lemma}
\theoremstyle{definition}
\theoremstyle{remark}
\numberwithin{equation}{section}
\begin{document}

\title[Regular character-graphs]{Regular character-graphs \\whose eigenvalues are greater than or equal to -2}

\author[M. Ebrahimi]{Mahdi Ebrahimi}
\address{School of Mathematics, Institute for Research in Fundamental Sciences (IPM), Tehran, 19395-5746, Iran}
\email{m.ebrahimi.math@ipm.ir}
\thanks{This research was supported in part by a grant from School of Mathematics, Institute for Research in Fundamental Sciences (IPM)}

\author[M. khatami]{Maryam Khatami}
\address{Department of Pure Mathematics, Faculty of Mathematics and Statistics, University of Isfahan, Isfahan, 81746-73441, Iran}
\email{m.khatami@sci.ui.ac.ir}

\author[Z. Mirzaei]{Zohreh Mirzaei}
\address{Department of Pure Mathematics, Faculty of Mathematics and Statistics, University of Isfahan, Isfahan, 81746-73441, Iran}
\email{z.mirzaei@sci.ui.ac.ir}

\subjclass[2020]{Primary 20C15, 05C50, 05C25}

\keywords{Character degree, Character-graph, Eigenvalue, Regular graph}

\date{}

\dedicatory{}

\begin{abstract}
Let $G$ be a finite group and $\mathrm{Irr}(G)$ be the set of all complex irreducible characters of $G$. The character-graph $\Delta(G)$ associated to $G$, is a graph whose vertex set is the set of primes which divide the degrees of some characters in $\mathrm{Irr}(G)$ and two distinct primes $p$ and $q$ are adjacent in $\Delta(G)$ if the product $pq$ divides $\chi(1)$, for some  $\chi\in\mathrm{Irr}(G)$. Tong-viet posed the conjecture that if $\Delta(G)$ is  $k$-regular for some integer $k\geqslant 2$, then $\Delta(G)$ is either a complete graph or a cocktail party graph. In this paper,  we show that his conjecture is true for all regular character-graphs whose eigenvalues are in the interval $[-2, \infty )$.
\end{abstract}

\maketitle

\section{Introduction}
In this paper, all groups are assumed to be finite and all graphs are considered as finite simple graphs. Let $G$ be a group and $\mathrm{Out}(G)$ be the group of outer automorphisms of $G$. For some positive integer $n$, the set of prime divisors of $n$, is denoted by $\pi(n)$. Also  we use the notation $\pi(G)$ for $\pi(|G|)$. The set of all complex irreducible characters of $G$ is denoted by  $\mathrm{Irr}(G)$  and $\mathrm{cd}(G)=\{\chi(1)\, |\, \chi\in \mathrm{Irr}(G)\}$, is the set of degrees of irreducible characters  of $G$. The set of primes that divide some degrees in $\mathrm{cd}(G)$ is denoted by $\rho(G)$.

 The character-graph $\Delta(G)$ associated to $G$, is a graph with vertex set $\rho(G)$ and two distinct vertices $p$ and $q$ are adjacent in $\Delta(G)$, if the product $pq$ divides some character degrees in $\mathrm{cd}(G)$ \cite{manz}. In last decades, studying character-graphs has been an interesting field of research for mathematicians. For example, Lewis classified the structure of finite solvable groups with disconnected character-graphs \cite{lewis.dis}. Also  Akhlaghi et al. proved that  the complement of the character-graph of a solvable group is bipartite \cite{a1}. As another example, the first author showed that if the diameter of $\Delta(G)$ is equal to three, then the complement of $\Delta(G)$ is bipartite \cite{e.diam}. For a survey on this topic we refer the readers to \cite{l1}.

Studying eigenvalues of graphs, is an attractive field of research for graph theorists. For example, it has been proved that a connected regular graph of degree $r> 0$, is strongly regular if and only if it has exactly three distinct eigenvalues \cite[Theorem $3.6.4$]{spect}. Also  characterizing graphs with bounded smallest eigenvalue, is one of the important research problems (see for example  \cite{cameron, doob, hoffman}). Determining graphs whose least eigenvalue is greater than or equal to $-2$, has received much attention for many years. In \cite{cameron}, Cameron et al. characterized this class of graphs completely. They proved that graphs with eigenvalues in the interval  $[-2,\infty )$ are either  generalized line graphs or some connected graphs which are called exceptional graphs. Regularity of graphs is also one of the important properties of graphs which can always be recognized using the eigenvalues of graphs. Specially the largest eigenvalue of regular graphs  would be the degree of regularity \cite[Corollary $3.2.2$]{spect}. Among all regular graphs, those whose eigenvalues are in the interval $[-2,\infty )$ have their own importance. For example, it has been proved that  a connected regular graph with least eigenvalue greater than $-2$, is either a complete graph or an odd cycle \cite[Theorem $2. 5$]{doob}. Also it has been shown that a regular graph with least eigenvalue  $-2$, is either a generalized line graph or one of the $187$  exceptional graphs, which are classified in three definite layers {\rm(}\cite{nn}, \cite[Theorem $4.1.5$]{spectb1}{\rm)}. As another example, Abdollahi et al.  studied distance-regular Cayley graphs whose least eigenvalue is equal to $-2$ \cite{abdollahi}.

In last decade, regular character-graphs have been also studied in some details. For example, Tong-Viet proved that the character-graph $\Delta(G)$ of a finite group $G$ is $3$-regular if and only if $\Delta(G)\cong K_4$ \cite[Theorem $A$]{tongreg}. Also the following conjecture has been posed by him.
\begin{conj}
 Let $G$ be a group. If $\Delta(G)$ is $k$-regular,  for some integer  $k\geqslant 2$, then $\Delta(G)$ is either a complete graph of order $k+1$ or a cocktail party graph of order $k+2$. 
\end{conj}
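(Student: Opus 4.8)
The plan is to attack the conjecture by two standard reductions and then confront what I expect to be the genuine obstacle. First I would reduce to the connected case: a disconnected $k$-regular graph with $k\geqslant 2$ forces every component to be itself $k$-regular on at least $k+1\geqslant 3$ vertices, and I would dispatch this using the classification of groups whose character-graph is disconnected, which I expect to rule out any disconnected $k$-regular example with $k\geqslant 2$ (note that both conjectured graphs are connected). Having assumed $\Delta(G)$ connected and $k$-regular on $n:=|\rho(G)|$ vertices, I would split according to whether $G$ is solvable.

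In the solvable case the essential input is the theorem of Akhlaghi et al. \cite{a1} that $\overline{\Delta(G)}$ is bipartite, say with parts $X$ and $Y$. Bipartiteness of the complement says each of $X,Y$ induces a clique in $\Delta(G)$ and every non-edge of $\Delta(G)$ runs between $X$ and $Y$; equivalently $\Delta(G)=K_n\setminus B$ where $B=\overline{\Delta(G)}$ is bipartite on $X,Y$. Then $k$-regularity of $\Delta(G)$ is equivalent to $B$ being $d$-regular with $d:=n-1-k$. If $d=0$ then $B$ is empty and $\Delta(G)=K_{k+1}$; if $d=1$ then $B$ is a perfect matching and $\Delta(G)$ is the cocktail party graph of order $k+2$. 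So the solvable case of the conjecture reduces to the single assertion that no solvable group realizes $\Delta(G)=K_n\setminus B$ with $B$ a $d$-regular bipartite graph for some $d\geqslant 2$. For the non-solvable case I would instead appeal to the structural and classification results for non-solvable groups (resting on the classification of finite simple groups) that bound $|\rho(G)|$ and force a large clique or dominating structure in $\Delta(G)$, and push $k$-regularity through the resulting short list to conclude $\Delta(G)=K_{k+1}$; I expect the cocktail party graph not to arise here. These cases are technical but finite in nature.

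The main obstacle is exactly the solvable subcase $d\geqslant 2$ isolated above, and the difficulty is real: a graph $K_n\setminus B$ with $B$ a $d$-regular bipartite graph automatically satisfies the only cheap necessary condition (bipartite complement), yet excluding it as a character-graph is an inverse problem for which no general tool is available. This is precisely where eigenvalues supply leverage. Writing $\Delta(G)=J-I-B$, the all-ones vector $\mathbf{1}$ gives eigenvalue $n-1-d=k$, while on $\mathbf{1}^{\perp}$ the eigenvalues are $-1-\nu$ as $\nu$ ranges over the eigenvalues of $B$; hence $\lambda_{\min}(\Delta(G))\geqslant -2$ if and only if the largest eigenvalue of $B$ on $\mathbf{1}^{\perp}$ is at most $1$. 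This is an extremely restrictive condition on a $d$-regular bipartite graph, and through the classification of graphs with least eigenvalue $\geqslant -2$ \cite{cameron} together with the regular $-2$ analysis \cite{nn} it collapses the candidates to line graphs and exceptional graphs, which are then removed by character-graph constraints (compare Tong-Viet's $3$-regular case \cite{tongreg}, where, for instance, the triangular prism $\overline{C_6}$ is excluded). This is the route the paper takes, and it works only inside $[-2,\infty)$.

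Dropping the eigenvalue hypothesis removes exactly this leverage, and that is where a new idea is needed. Candidates such as $K_8\setminus(C_4\cup C_4)$ are $5$-regular with bipartite complement $2C_4$ yet have $\lambda_{\min}=-3$, so they lie outside \cite{cameron} and cannot be touched by the classification; they must be excluded by genuinely new realizability arguments for solvable groups. Thus the full conjecture is equivalent to the paper's theorem \emph{plus} the statement that every $k$-regular character-graph already has all eigenvalues in $[-2,\infty)$, and proving the latter is the crux I expect to require input beyond the graph theory that suffices in the eigenvalue-bounded regime.
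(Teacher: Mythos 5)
Your proposal is not, and does not claim to be, a proof of the stated conjecture --- and indeed the paper contains no proof of it either: the statement is Tong-Viet's conjecture, which the paper explicitly leaves open, proving only Theorem A, i.e.\ the subcase in which all eigenvalues of $\Delta(G)$ lie in $[-2,\infty)$. Within that scope, your reductions are sound and do reconstruct the paper's leverage: connectedness can be assumed (the paper's Lemma \ref{parta} shows a disconnected regular character-graph is $\overline{K_2}$ or $\overline{K_3}$, so $k\geqslant 2$ forces connectedness); writing $\Delta(G)=K_n\setminus B$ with $B=\overline{\Delta(G)}$ a $d$-regular bipartite graph, $d=n-1-k$, correctly identifies $d=0$ with $K_{k+1}$ and $d=1$ with $\mathrm{CP}\bigl(\tfrac{k+2}{2}\bigr)$; and your spectral computation ($\lambda_{\min}(\Delta(G))\geqslant -2$ iff $B$ has no eigenvalue exceeding $1$ on $\mathbf{1}^{\perp}$) is exactly why \cite{cameron} and the regular $-2$ classification apply in the paper's regime, with line graphs such as $L(K_{2,n})$ then removed by group-theoretic work (the paper's Section 3) just as $\overline{C_6}$ is removed in \cite{tongreg}. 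You honestly flag that the case $d\geqslant 2$ with an eigenvalue of $B$ exceeding $1$ off $\mathbf{1}$ is untouched; that flagged case \emph{is} the open content of the conjecture, so as a proof there is a genuine, self-acknowledged gap.

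Beyond the acknowledged gap, your case division inverts where the difficulty actually lives, in two ways that would derail the plan as written. First, the solvable subcase $d\geqslant 2$ that you isolate as ``the main obstacle'' is not open: Morresi Zuccari \cite{morre.reg}, cited in the paper's introduction, proved the full conjecture for solvable groups with no eigenvalue hypothesis, so a candidate like $K_8\setminus(C_4\cup C_4)$ requires no ``genuinely new realizability arguments for solvable groups'' --- it is already excluded there. Moreover, the bipartite-complement input you attribute to the solvable theory is available for \emph{all} groups with a regular character-graph on at least three vertices, by \cite{akhlaghi.reg} (the paper's Lemma \ref{akh.reg}). Second, your non-solvable plan rests on classification results ``that bound $|\rho(G)|$'' and make the case ``finite in nature,'' but no such bound exists: $|\rho(G)|$ is unbounded already for $G=\mathrm{PSL}_2(q)$, so that step fails concretely. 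The true state of affairs is the mirror image of your split: the solvable case is closed, and the crux is precisely the non-solvable case with $d\geqslant 2$ and least eigenvalue below $-2$ (for instance $K_8\setminus C_8$, where $B=C_8$ carries $\sqrt{2}$ on $\mathbf{1}^{\perp}$, is $5$-regular with bipartite complement and $\lambda_{\min}=-1-\sqrt{2}$). Your closing equivalence --- the conjecture equals Theorem A plus the assertion that every regular character-graph has spectrum in $[-2,\infty)$ --- is correct, but it is a restatement of the open problem rather than progress on it.
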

Morresi-Zuccari showed that Tong-viet's conjecture is true for solvable groups \cite[Theorem $A$]{morre.reg}. Also in \cite[Theorem $A$]{akhlaghi.reg}, Sayanjaly et al.  proved that  a regular character-graph $\Delta(G)$ with odd order,  of a group $G$, is a complete graph, which confirms the Tong-viet's conjecture in some cases. In this paper, we wish to prove that Tong-viet's conjecture is  true for all regular character-graphs whose eigenvalues are in the interval $[-2,\infty )$. Now we are ready to state our main results.

\begin{ta}
Let $G$ be a finite group and $k\geqslant 2$ be an integer. Suppose that  $\Delta(G)$ is a $k$-regular character-graph whose eigenvalues are in the interval $[-2,\infty )$. Then   $\Delta(G)$ is isomorphic to either a complete graph of order $k+1$, or a cocktail party graph $\mathrm{CP}(m)$, where for some positive integer $m$ we have $|\rho(G)|=k+2=2m$.
\end{ta}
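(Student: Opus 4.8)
The plan is to use the spectral classification of regular graphs with least eigenvalue at least $-2$ as the main engine, and then to eliminate every resulting candidate except the complete graph and the cocktail party graph by invoking the structure theory of character-graphs. Since $\Delta(G)$ is $k$-regular, its largest eigenvalue equals $k$, and a disconnected $k$-regular graph would have $k$ as an eigenvalue of multiplicity equal to its number of components. Using the well-known fact that $\Delta(G)$ has at most two connected components together with the solvable case of the conjecture, already established in \cite[Theorem $A$]{morre.reg}, I would first reduce to the case where $\Delta(G)$ is \emph{connected}; note also that both target graphs are connected. I then split according to whether the least eigenvalue $\lambda$ of $\Delta(G)$ satisfies $\lambda>-2$ or $\lambda=-2$.

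In the case $\lambda>-2$, by \cite[Theorem $2.5$]{doob} a connected regular graph with least eigenvalue greater than $-2$ is either a complete graph or an odd cycle. A complete graph is exactly the first alternative $K_{k+1}$, so it remains to exclude the odd cycles $C_n$ with $n\geqslant 5$. Here the diameter bound $\mathrm{diam}(\Delta(G))\leqslant 3$ already forces $n\leqslant 7$, leaving only $C_5$ and $C_7$ (which have $\lambda>-2$). These are $2$-regular, and I would rule them out by showing that the only admissible $2$-regular character-graphs are $K_3$ and $C_4=\mathrm{CP}(2)$: for solvable $G$ this follows from the fact that $\overline{\Delta(G)}$ is bipartite \cite{a1} (whereas $\overline{C_5}$ and $\overline{C_7}$ contain odd cycles), and for non-solvable $G$ from the structure theory of character-graphs of non-solvable groups.

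In the case $\lambda=-2$, the classification of regular graphs with smallest eigenvalue $-2$ (\cite{nn}, \cite[Theorem $4.1.5$]{spectb1}, building on \cite{cameron}) tells me that $\Delta(G)$ is either a regular generalized line graph or one of the $187$ exceptional graphs. A connected regular generalized line graph is either a line graph $L(H)$ or a cocktail party graph; the latter is precisely the target $\mathrm{CP}(m)$ with $|\rho(G)|=2m=k+2$, so I only need to treat $\Delta(G)=L(H)$ and the exceptional graphs. For $L(H)$, the maximal cliques correspond to the vertices of $H$ and the independence number of $L(H)$ equals the matching number of $H$; for solvable $G$ the condition that $\overline{\Delta(G)}$ is triangle-free (a consequence of \cite{a1}) forces $H$ to have matching number at most $2$, collapsing $L(H)$ to a complete graph, and the cycle $C_6=L(C_6)$ is excluded the same way. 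The exceptional graphs are finitely many, their regular members have bounded degree, and I would dispatch them using \cite[Theorem $A$]{tongreg} (which already pins down the $3$-regular case as $K_4$, excluding the Petersen graph and similar small-degree exceptionals) together with the solvable result and a direct check of the remaining few against the character-graph constraints.

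I expect the main obstacle to be the non-solvable instances of the generalized-line-graph case $\Delta(G)=L(H)$ with $H$ not a star: this is an infinite family, and the convenient tool that $\overline{\Delta(G)}$ is bipartite is unavailable outside the solvable world. Resolving it should require the detailed structure theory of character-graphs of non-solvable groups to constrain $H$ and to confirm that no regular exceptional graph can occur, after which the only surviving possibilities are the complete graph $K_{k+1}$ and the cocktail party graph $\mathrm{CP}(m)$, as claimed.
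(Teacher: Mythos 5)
Your proposal has the right skeleton---the same spectral engine the paper uses (Doob for $\lambda>-2$, the generalized-line-graph/exceptional dichotomy for $\lambda=-2$)---but it contains one false step and one unresolved obstacle, and the obstacle is precisely where the whole content of the theorem lies. First, the false step: you claim that for solvable $G$ the triangle-freeness of $\overline{\Delta(G)}$ forces the root graph $H$ to have matching number at most $2$, ``collapsing $L(H)$ to a complete graph.'' Matching number at most $2$ does \emph{not} collapse $L(H)$: take $H=K_{2,n}$, whose matching number is $2$; then $L(K_{2,n})$ is the graph the paper calls $F(n)$ (two copies of $K_n$ joined by a perfect matching). It is $n$-regular, its complement is $K_{n,n}$ minus a perfect matching---hence bipartite, so it passes every complement-parity test---and, being a line graph, all its eigenvalues are at least $-2$. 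So $F(n)$ survives every graph-theoretic filter in your proposal, for solvable and non-solvable $G$ alike.

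Second, the unresolved obstacle you flag at the end (``the non-solvable instances of $\Delta(G)=L(H)$'') is not a loose end to be patched by ``detailed structure theory''; it \emph{is} the theorem. Two corrections to your framing: (i) the tool you say is unavailable outside the solvable world is in fact available---by \cite[Theorem $1.2$]{akhlaghi.reg} (Lemma \ref{akh.reg} in the paper), for \emph{any} finite group with $|\rho(G)|\geqslant 3$, regularity of $\Delta(G)$ forces $\overline{\Delta(G)}$ to be bipartite or $\Delta(G)\cong\overline{K_3}$; this is what the paper uses to kill the odd cycles, the exceptional graphs (Lemma \ref{exceptional2}), and to bound $\alpha'(\Gamma)\leqslant 2$ uniformly, with no solvable/non-solvable case split. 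But (ii) even with this stronger tool, the family $F(n)=L(K_{2,n})$ remains, and excluding it requires genuinely new group theory: the paper devotes all of Section \ref{sec3} (Theorem \ref{forbidengraph}) to proving $F(n)$ is never a character-graph, using the classification of finite simple groups, Proposition B, reduction to an almost simple group with socle $\mathrm{PSL}_2(q)$, Dickson's subgroup list, Zsigmondy's theorem, and case analysis of Clifford theory over the solvable radical. Your proposal neither supplies this argument nor identifies a substitute for it, so as written it proves the theorem only modulo its hardest and essential case.
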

To prove Theorem A, we state a simple but interesting observation on non-abelian simple groups which is a fundamental tool for us in this paper.
\begin{pb}
Let $S$ be a non-abelian simple group. Then  $|\pi(\mathrm{Out}(S))\setminus \pi(S)|< |\pi(S)|$.
\end{pb}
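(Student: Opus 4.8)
The plan is to invoke the classification of finite simple groups and dispose of the generic families before confronting the groups of Lie type, which carry the whole weight of the statement. If $S$ is an alternating group $A_n$ with $n\geq 5$, then $\mathrm{Out}(S)$ is a $2$-group (of order $2$, except $\mathrm{Out}(A_6)\cong C_2\times C_2$), and if $S$ is sporadic or the Tits group, then $|\mathrm{Out}(S)|\in\{1,2\}$; in all these cases $\pi(\mathrm{Out}(S))\subseteq\{2\}\subseteq\pi(S)$, so the set $T:=\pi(\mathrm{Out}(S))\setminus\pi(S)$ of ``new'' primes is empty and there is nothing to prove. Thus I would assume $S$ is a simple group of Lie type in characteristic $p$ over a field of size $q=p^f$.

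For such $S$ I would use the standard description $|\mathrm{Out}(S)|=d\cdot e\cdot g$, where $d$ is the order of the group of diagonal automorphisms, $e$ is the order of the cyclic group of field automorphisms, and $g\in\{1,2,3,6\}$ is the order of the group of graph automorphisms. The first reduction is to show that the diagonal and graph contributions bring in no new primes. Here $d$ divides one of $\gcd(n,q-1)$, $\gcd(n,q+1)$, or a divisor of $4$ (according to the type), so $\pi(d)\subseteq\pi(q-1)\cup\pi(q+1)\cup\{2\}\subseteq\pi(S)$, since $q^2-1$ always divides $|S|$; similarly $\pi(g)\subseteq\{2,3\}$, and one checks $3\in\pi(S)$ in the few types ($D_4$, ${}^3D_4$, $E_6$, ${}^2E_6$) where the prime $3$ can enter through $dg$ or through the triality factor of $e$. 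Consequently every prime in $T$ is an odd prime $r$ dividing $e$, and since $e\in\{f,2f,3f\}$ with the factor $3$ occurring only for ${}^3D_4$ (a type with $3\in\pi(S)$), in fact $r\mid f$.

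The heart of the argument is a Zsygmondy injection. For each prime $r\in T$ I would attach a primitive prime divisor $\zeta_r$ of $p^{r}-1$, i.e. a prime with $\mathrm{ord}_{\zeta_r}(p)=r$; this exists by Zsygmondy's theorem because $r$ is an odd prime and hence $r\notin\{1,2,6\}$, the list of exceptions. Since $r\mid f$ we have $p^{r}-1\mid p^{f}-1=q-1$, and $q-1\mid|S|$ for every group of Lie type, so $\zeta_r\in\pi(S)$; moreover $\zeta_r\neq p$. The assignment $r\mapsto\zeta_r$ is injective because the multiplicative order of $p$ modulo $\zeta_r$ recovers $r$, so it embeds $T$ into $\pi(S)\setminus\{p\}$, which yields
$$|T|\leq|\pi(S)\setminus\{p\}|=|\pi(S)|-1<|\pi(S)|,$$
exactly the claimed inequality.

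I expect the main obstacle to be bookkeeping rather than ideas: one must pin down the precise order and prime content of $\mathrm{Out}(S)$ uniformly across all Lie-type families---in particular the field-automorphism order $e$ for the twisted groups (${}^2A_n$, ${}^2B_2$, ${}^2G_2$, ${}^2F_4$, ${}^3D_4$, ${}^2E_6$) and the small exceptional configurations---and confirm in each case both that $q-1$ divides $|S|$ and that the primes $2,3$ occurring in $d$ and $g$ genuinely lie in $\pi(S)$. Once this case-checking is assembled, the Zsygmondy injection closes the argument with room to spare, since it never even uses the (true) fact that $|\pi(S)|\geq 3$.
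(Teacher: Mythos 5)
Your proposal is correct and follows essentially the same route as the paper's proof: invoke the classification, dispose of the alternating and sporadic cases via $\pi(\mathrm{Out}(S))\subseteq\{2\}$, reduce the Lie-type case to primes dividing the field degree $f$, and use Zsigmondy primitive prime divisors to inject those primes into $\pi(q-1)\subseteq\pi(S)$, with the defining characteristic supplying the strict inequality. The only minor difference is bookkeeping: the paper injects all of $\pi(f)$ into $\pi(u^f-1)$ and gets strictness from $u\nmid u^f-1$, whereas your restriction to the \emph{odd} primes of $\pi(\mathrm{Out}(S))\setminus\pi(S)$ (legitimate, since $2\in\pi(S)$) has the small added benefit of avoiding the exponent-$2$ exception in Zsigmondy's theorem, which the paper's terser ``it is clearly seen'' step leaves implicit.
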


\section{Preliminaries and Notations}\label{sec2}
In this section, we state some information and notations which we will use in the next sections. Let $G$ be a group and $R(G)$ the  solvable radical of $G$. For $H\leq\, G$ and $\theta\in\mathrm{Irr}(H)$, the set of all irreducible characters of $G$ lying over $\theta$ is denoted by $\mathrm{Irr}(G|\theta)$ and $\mathrm{cd}(G|\theta)=\{\chi(1)\, \big|\, \chi\in \mathrm{Irr}(G|\theta) \}$. Also if $N\lhd G$ and $\theta\in\mathrm{Irr}(N)$, the inertia subgroup of $\theta$ in $G$ is denoted by $I_{G}(\theta)$. We frequently use, Clifford's theorem  \cite[Theorem $6.11$]{i} and Gallagher's theorem \cite[Corollary $6.17$]{i}, without any further references. Also we use Zsigmondy's theorem which can be found in \cite{zsig}. We begin with the following lemmas.
\begin{lemma}[{\cite[Lemma $11.29$]{i}}]\label{11.29}
Let $N\unlhd\, G$ and $\phi\in\mathrm{Irr}(N)$. Then for every $\chi\in\mathrm{Irr}(G\, |\, \phi)$, $\frac{\chi(1)}{\phi(1)}\, \Big| \, |G:N|$.
\end{lemma}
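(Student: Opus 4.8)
The plan is to prove this (it is Isaacs' Lemma $11.29$) by the standard two-stage Clifford-theoretic reduction, in which the only genuine content lives in the case where $\phi$ is $G$-invariant. First I would reduce to that case. Put $T=I_{G}(\phi)$. By the Clifford correspondence, induction $\psi\mapsto\psi^{G}$ is a bijection from $\mathrm{Irr}(T\mid\phi)$ onto $\mathrm{Irr}(G\mid\phi)$, so the given $\chi$ equals $\psi^{G}$ for a unique $\psi\in\mathrm{Irr}(T\mid\phi)$, with $\chi(1)=|G:T|\,\psi(1)$. Since $N\le T\le G$, the index is multiplicative, $|G:N|=|G:T|\,|T:N|$. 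Hence if the conclusion is known for the pair $(T,N)$, i.e. $\psi(1)/\phi(1)$ divides $|T:N|$, then
\[
\frac{\chi(1)}{\phi(1)}=|G:T|\cdot\frac{\psi(1)}{\phi(1)}\ \Big|\ |G:T|\cdot|T:N|=|G:N|.
\]
As $\phi$ is $T$-invariant by definition of $T$, it therefore suffices to prove the lemma under the additional hypothesis that $\phi$ is $G$-invariant.

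Next I would treat the invariant case, where $(G,N,\phi)$ is a character triple, using its reduction to the central situation. The triple $(G,N,\phi)$ is isomorphic to a triple $(\Gamma,Z,\lambda)$ in which $Z\le Z(\Gamma)$ and $\lambda$ is a faithful linear character of $Z$. Such a triple isomorphism supplies, at the level of the full groups, a degree-ratio-preserving bijection $\chi\mapsto\chi^{*}$ from $\mathrm{Irr}(G\mid\phi)$ onto $\mathrm{Irr}(\Gamma\mid\lambda)$ satisfying
\[
\frac{\chi(1)}{\phi(1)}=\frac{\chi^{*}(1)}{\lambda(1)}=\chi^{*}(1),
\]
and it preserves indices, $|G:N|=|\Gamma:Z|$. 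So the task reduces to showing that for $\chi^{*}\in\mathrm{Irr}(\Gamma\mid\lambda)$ with $Z\le Z(\Gamma)$ one has $\chi^{*}(1)\mid|\Gamma:Z|$.

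This final step is then a classical fact: every irreducible degree divides the index of the center, $\chi^{*}(1)\mid|\Gamma:Z(\Gamma)|$, and since $Z\le Z(\Gamma)$ gives $|\Gamma:Z(\Gamma)|\mid|\Gamma:Z|$, we conclude $\chi^{*}(1)\mid|\Gamma:Z|=|G:N|$. Translating back through the triple isomorphism and then through the Clifford correspondence yields $\chi(1)/\phi(1)\mid|G:N|$ in general. The argument is essentially bookkeeping resting on two standard pillars; the one nontrivial ingredient, and the place where any real difficulty sits, is the central reduction of character triples, which is precisely what upgrades the easy inequality $(\chi(1)/\phi(1))^{2}\le|G:N|$ to the sharper divisibility asserted here.
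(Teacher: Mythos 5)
Your proof is correct and is essentially the proof in the cited source: the paper itself gives no argument, quoting this as \cite[Lemma $11.29$]{i}, and Isaacs proves it exactly as you do, via the Clifford correspondence to reduce to the $G$-invariant case, the character-triple isomorphism to a central triple $(\Gamma, Z, \lambda)$ with $Z\leq Z(\Gamma)$, and the classical divisibility $\chi^{*}(1)\mid |\Gamma : Z|$. All the steps you invoke (degree-ratio and index preservation under triple isomorphisms, multiplicativity of degrees under the Clifford bijection) are sound, so there is nothing to fix.
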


\begin{lemma}[{\cite[Lemma $4.2$]{t.notriangle}}]\label{notri}
Let $N$ be a normal subgroup of a group $G$ such that $\frac{G}{N}\cong S$, where $S$ is a non-abelian simple group. Also let $\theta\in\mathrm{Irr}(N)$. Then either $\frac{\chi(1)}{\theta(1)}$ is divisible by two distinct primes in $\pi(S)$ for some $\chi\in\mathrm{Irr}(G|\theta)$ or $\theta$ is extendible to $\theta_{0}\in\mathrm{Irr}(G)$ and $S\cong A_{5}$ or $\mathrm{PSL}_{2}(8)$.
\end{lemma}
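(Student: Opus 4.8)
The plan is to reduce to the $G$-invariant case by Clifford theory and then to read off the ratios $\chi(1)/\theta(1)$ from the character theory of $S$ and its covers. Write $I=I_{G}(\theta)$ and $\bar I=I/N$, so that $\bar I\leq S$ and, by the Clifford correspondence, induction $\psi\mapsto\psi^{G}$ is a bijection $\mathrm{Irr}(I|\theta)\to\mathrm{Irr}(G|\theta)$ with $\psi^{G}(1)=|S:\bar I|\,\psi(1)$. When $\theta$ is $G$-invariant (i.e.\ $I=G$), I would invoke the theory of character triples \cite[Chapter~11]{i} to replace $(G,N,\theta)$ by an isomorphic triple $(\Gamma,Z,\lambda)$ with $Z=Z(\Gamma)$ cyclic, $\Gamma/Z\cong S$ and $\lambda\in\mathrm{Irr}(Z)$ faithful and linear. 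The point of this reduction is that $\{\chi(1)/\theta(1):\chi\in\mathrm{Irr}(G|\theta)\}$ is exactly the set of degrees of the characters in $\mathrm{Irr}(\Gamma|\lambda)$, that is, the degrees of the irreducible $\alpha$-projective representations of $S$, where $\alpha$ is the cohomology class determined by $\lambda$. If $\lambda$ is trivial then $\theta$ extends to $G$ by Gallagher's theorem and this set is just $\mathrm{cd}(S)$; otherwise $\Gamma/\ker\lambda$ is a nontrivial central cover of $S$ and the relevant degrees are the faithful ones.

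The heart of the argument is then the following consequence of the classification of finite simple groups: the only non-abelian simple groups all of whose irreducible character degrees are prime powers are $A_{5}$ and $\mathrm{PSL}_{2}(8)$. Granting this, suppose first that $\theta$ is invariant and extends. If $S\notin\{A_{5},\mathrm{PSL}_{2}(8)\}$ then $\mathrm{cd}(S)$ contains a degree divisible by two distinct primes of $\pi(S)$, and multiplying by $\theta(1)$ produces the desired $\chi$; if $S\in\{A_{5},\mathrm{PSL}_{2}(8)\}$ then a direct inspection of $\mathrm{cd}(A_{5})=\{1,3,4,5\}$ and $\mathrm{cd}(\mathrm{PSL}_{2}(8))=\{1,7,8,9\}$ shows that every ratio is a prime power, and we land precisely in the exceptional conclusion of the lemma.

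It remains to rule out the two cases in which the exceptional conclusion must \emph{not} occur, namely $\theta$ invariant but non-extending, and $\theta$ non-invariant. In the non-extending invariant case the ratios are the faithful degrees of a proper central cover of $S$, and one checks that this set always contains a number divisible by two primes; the only delicate instance is $S\cong A_{5}$, whose cover $2.A_{5}\cong\mathrm{SL}_{2}(5)$ has a faithful character of degree $6=2\cdot 3$, while $\mathrm{PSL}_{2}(8)$ has trivial Schur multiplier and so admits no such cover at all. In the non-invariant case I would use $\chi=\psi^{G}$, so that $\chi(1)/\theta(1)=|S:\bar I|\,(\psi(1)/\theta(1))$; if $|S:\bar I|$ is divisible by two primes we are done at once, and if it is a prime power we appeal to the classification of subgroups of prime-power index in simple groups to run through the short list of possibilities and exhibit a suitable $\chi$ in each.

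The main obstacle is the uniform verification, for the groups of Lie type, that the relevant degree set (ordinary degrees, or faithful degrees of a cover, or a prime-power index times a cover degree) meets a number with two distinct prime divisors. Here I would use Zsigmondy's theorem \cite{zsig} to produce, for suitable $i$, a prime dividing $q^{i}-1$ but no smaller $q^{j}-1$; placing two such primes on the generic degrees $q\pm 1$, $(q^{n}-1)/(q-1)$, and the like, forces a two-prime degree for all but finitely many small groups, which are then handled by inspection. The prime-power-index bookkeeping in the non-invariant case is the most laborious part, but it is entirely finite once Guralnick's list is invoked.
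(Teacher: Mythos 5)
This lemma is not proved in the paper at all: it is imported verbatim from Tong-Viet \cite[Lemma 4.2]{t.notriangle}, so there is no internal proof to compare against. Measured against the original argument, your reconstruction follows essentially the same route: the Clifford correspondence to reduce to the stabilizer $I$, character triples to replace an invariant $(G,N,\theta)$ by a central extension of $S$ so that the ratios $\chi(1)/\theta(1)$ become ordinary degrees of $S$ or faithful degrees of a proper quasisimple cover, the fact that $A_5$ and $\mathrm{PSL}_2(8)$ are the only non-abelian simple groups all of whose character degrees are prime powers (readable off White's description of $\Delta(S)$ \cite{whitesim}: these are the only simple $S$ with $\Delta(S)$ edgeless), and, for non-invariant $\theta$, Guralnick's classification of subgroups of prime-power index. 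One small point you could have added: the requirement that the two primes lie in $\pi(S)$ costs nothing, since $\chi(1)/\theta(1)$ divides $|G:N|=|S|$ by Lemma \ref{11.29}.

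Two of your claims are overstated, though neither breaks the plan. First, in the invariant non-extending case, the assertion that the faithful degrees of a proper cover ``always contain a number divisible by two primes'' with $2.A_5$ as ``the only delicate instance'' hides a genuine case analysis: besides the generic covers $\mathrm{SL}_2(q)$, $\mathrm{SL}_n(q)$, etc.\ (where your Zsigmondy argument does the job), the groups with exceptional Schur multipliers --- covers of $\mathrm{PSL}_3(4)$, $^{2}\mathrm{B}_2(8)$, $\mathrm{PSU}_4(3)$, $\mathrm{PSU}_6(2)$, $\Omega_7(3)$, and so on --- must each be checked against the Atlas; the checks succeed, but they are part of the proof, not a remark. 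Second, your statement that the non-invariant case ``is entirely finite once Guralnick's list is invoked'' is wrong as written: that list contains two infinite families, namely $A_{p^a-1}<A_{p^a}$ and a point stabilizer in $\mathrm{PSL}_n(q)$ with $(q^n-1)/(q-1)=p^a$, in addition to the finitely many cases $\mathrm{PSL}_2(11)$, $\mathrm{M}_{11}$, $\mathrm{M}_{23}$, $\mathrm{PSU}_4(2)$. The alternating family yields to a recursion through the lemma itself (the exceptional branch would force $p^a-1=5$, impossible for a prime power $p^a$), but the $\mathrm{PSL}_n(q)$ family needs a uniform argument about $\mathrm{Irr}(I|\theta)$ with $I/N$ a maximal parabolic --- inspection of a finite list does not suffice. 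With these two steps honestly carried out, your outline is a correct proof in the same spirit as the cited one.
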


Now we use \cite{bondy}, \cite{spect} and \cite{spectb1} to review some necessary concepts in graph theory. Let $\Gamma$ be a finite graph with vertex set $V(\Gamma)$ and edge set $E(\Gamma)$. The number $|V(\Gamma)|$ is called the order of  $\Gamma$. If $E(\Gamma)=\emptyset$, then $\Gamma$ is called an empty graph. We use the notation $vw$, for the edge between  $v, w\in V(\Gamma)$. The complement of $\Gamma$ is denoted by  $\overline{\Gamma}$.  Also for $X\subseteq\, V(\Gamma)$, the induced subgraphs of $\Gamma$ on the subsets $X$ and $V(\Gamma ) \setminus X$, are denoted by  $\Gamma[X]$ and $\Gamma\setminus X$, respectively. A cut edge of $\Gamma$ is an edge whose removal from the graph $\Gamma$, increases the number of connected components of $\Gamma$. The clique number of $\Gamma$, denoted by $\omega (\Gamma )$, is the order of a largest complete subgraph of $\Gamma$. Also a matching in the graph $\Gamma$, is a set of pairwise non-adjacent edges. A maximum matching in $\Gamma$ is a matching with maximum number of edges among all matchings in $\Gamma$ and the matching number of $\Gamma$, denoted by $\alpha^{'}(\Gamma )$, is the size of a maximum matching. Now let $\Delta$ be a graph with vertex set $V(\Delta)$ for which $V(\Gamma)\cap V(\Delta)=\emptyset$. The join graph $\Gamma\star \Delta$ is the union of  $\Gamma$ and $ \Delta$  together with all edges joining $V(\Gamma)$ and $V(\Delta)$. For each vertex $v\in V(\Gamma)$, the degree of $v$ is denoted by $deg(v)$. If $deg(v)=r$, for every $v\in V(\Gamma)$,  then  $\Gamma$ is called an $r$-regular graph. The complete graph and the cycle with $n$ vertices are denoted by $K_n$ and $C_n$, respectively. For some positive integer $n$, the complement of the disjoint union of $n$  copies of $K_2$ is called the cocktail party graph which is denoted by $\mathrm{CP}(n)$. A bipartite graph whose partitions have $n_1$ vertices of degree $r_1$ and $n_2$ vertices of degree $r_2$ such that $n_1 r_1 = n_2 r_2$, is called a semi-regular bipartite graph  with parameters $(n_1, n_2, r_1, r_2)$. The complete bipartite graph whose partitions have  $m$ and $n$ vertices is denoted by  $K_{m,n}$.

Let $V(\Gamma):=\{v_1, v_2, \ldots , v_n\}$. The adjacency matrix of $\Gamma$ is the $n\times n$ matrix, denoted by $A$ ($A(\Gamma )$), whose rows and columns are indexed by $V(\Gamma )$ so that the $(v_i,v_j)$-entry $a_{ij}$ is equal to $1$, if $v_i v_j\in E(\Gamma )$ and $0$ otherwise, where $1 \leqslant i, j \leqslant n$. The eigenvalue of $\Gamma$, is an eigenvalue of the adjacency matrix of $\Gamma$. Since the adjacency matrix of $\Gamma$ is symmetric, the eigenvalues of $\Gamma$ are real numbers. The line graph $L(\Gamma)$ of the graph $\Gamma$, is a graph whose vertex set is $E(\Gamma)$ so that two vertices in $L(\Gamma)$ are adjacent whenever the corresponding edges in $\Gamma$ have exactly one vertex in common. Let  $a_1, \ldots , a_n$ be non-negative integers. The generalized line graph $L(\Gamma; a_1, \ldots , a_n)$, consists of disjoint union of $L(\Gamma)$ and $\mathrm{CP}(a_i)$, for  $i = 1, 2, \ldots , n $, along with all edges joining a vertex $v_i v_j $ of $L(\Gamma)$ with each vertex in $\mathrm{CP}(a_i)$ and $\mathrm{CP}(a_j)$, for every distinct $i, j\in\{1, \ldots , n\}$.   As special cases, line graphs ($a_1 = a_2 = \ldots = a_n=0$) and the cocktail party graph $\mathrm{CP}(n)=L(K_1; n)$, are generalized line graphs. Note that eigenvalues of the generalized line graphs are in the interval  $[-2,\infty)$. An exceptional graph is a connected graph, other than a generalized line graph, whose eigenvalues are in the interval $[-2,\infty )$.

Given a subset  $U$ of $V(\Gamma )$, the switching of $\Gamma$ with respect to $U$, denoted by $\Gamma_{U}$, is a graph with vertex set $V(\Gamma )$ and the adjacency between it's vertices is defined as follows:  $\Gamma_{U}[U]=\Gamma[U]$, $\Gamma_{U}\setminus  U=\Gamma\setminus U$ and for every $u\in U$ and $v\in V(\Gamma ) \setminus U$, the vertices $u$ and $v$ are adjacent in $\Gamma_{U}$ as well as they are adjacent in  $\overline{\Gamma}$. The Schl\"{a}fli graph and the Clebsch graph are two important examples of exceptional graphs which are defined as follows. Let $\Gamma =L(K_8)$  and $U\subseteq V(\Gamma )$ be the set of neighbours of a given vertex  $v\in V(\Gamma )$. Thus $\Gamma_{U}$ is a graph in which $v$ is an isolated vertex. Then $\Gamma_{U}\setminus \{v\}$ is a $16$-regular graph with $27$ vertices, which is called the Schl\"{a}fli graph and is denoted by $Sch_{27}$. Note that $\omega(Sch_{27})=6$. Now let $\Gamma =L(K_{4,4})$ and $U\subseteq V(\Gamma )$ be the set of vertices of a subgraph of $\Gamma$ isomorphic to  $L(K_{4,2})$. Then $\Gamma_{U}$ is a $10$-regular graph with $16$ vertices called the Clebsch graph.

\begin{lemma}\label{lineh}
Let $\Delta$ be a simple graph. Also let $\Delta=L(\Gamma)$, for some graph $\Gamma$ and $\overline{\Delta}$  be bipartite. Then the following hold:\\
{\bf 1.} $\alpha^{'}(\Gamma)\leqslant 2$ and, \\
{\bf 2.} $\Gamma$ has no subgraph isomorphic to $C_{5}$.
\end{lemma}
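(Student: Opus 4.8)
The plan is to reinterpret the hypothesis that $\overline{\Delta}$ is bipartite as a statement about cliques of $\Delta$ itself, and then to read off both conclusions from the line-graph correspondence between edges of $\Gamma$ and vertices of $\Delta$. Concretely, if $\overline{\Delta}$ is bipartite with bipartition $V(\Delta)=A\cup B$, then by definition $\overline{\Delta}$ has no edge inside $A$ and no edge inside $B$; equivalently, every pair of distinct vertices inside $A$ (respectively inside $B$) is adjacent in $\Delta$. Thus $A$ and $B$ are each cliques of $\Delta$, and the entire hypothesis amounts to saying that $V(\Delta)$ is covered by two cliques. This reformulation is the only conceptual step; everything else is counting.

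For part 1, I would use that a matching of $\Gamma$ is exactly a set of edges of $\Gamma$ that pairwise share no vertex, which is precisely an independent set of vertices of $L(\Gamma)=\Delta$. Hence $\alpha^{'}(\Gamma)$ equals the independence number of $\Delta$. Now an independent set of $\Delta$ can contain at most one vertex of the clique $A$ and at most one vertex of the clique $B$; since $V(\Delta)=A\cup B$, any independent set has size at most $2$. Therefore $\alpha^{'}(\Gamma)\leqslant 2$.

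For part 2, suppose toward a contradiction that $\Gamma$ contains a subgraph isomorphic to $C_5$, say on vertices $v_1,\dots,v_5$ with edges $e_i=v_iv_{i+1}$ (indices taken modulo $5$). Viewed as vertices of $\Delta=L(\Gamma)$, two of the $e_i$ are adjacent in $\Delta$ exactly when they share an endpoint in $\Gamma$; in a $5$-cycle consecutive edges share a vertex while non-consecutive ones do not, so the five vertices $e_1,\dots,e_5$ induce a copy of $C_5$ in $\Delta$. Writing $S=\{e_1,\dots,e_5\}$, the sets $A\cap S$ and $B\cap S$ are cliques of this induced $C_5$, and since $\omega(C_5)=2$ each has at most two elements. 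But $S\subseteq A\cup B$ forces $5\leqslant 2+2$, a contradiction; hence $\Gamma$ has no subgraph isomorphic to $C_5$.

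I expect no serious obstacle: the argument is elementary once the hypothesis is translated into ``two cliques cover $V(\Delta)$.'' The only points needing a little care are verifying that the five chosen edges genuinely induce $C_5$ rather than a graph with extra chords---which holds because the mutual intersection pattern of the $e_i$ depends only on the $5$-cycle and not on any further edges $\Gamma$ might carry---together with recalling the standard facts that a matching of $\Gamma$ is an independent set of $L(\Gamma)$ and that $\omega(C_5)=2$.
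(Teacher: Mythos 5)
Your proposal is correct and takes essentially the same route as the paper: both translate matchings of $\Gamma$ into independent sets of $\Delta=L(\Gamma)$, and both observe that the five edges of a $C_5$ in $\Gamma$ induce a $C_5$ in $\Delta$. The only difference is cosmetic --- the paper derives the contradiction by exhibiting a $K_3$ (respectively an induced $C_5$, whose complement is again $C_5$) inside the bipartite graph $\overline{\Delta}$, whereas you phrase bipartiteness of $\overline{\Delta}$ as a cover of $V(\Delta)$ by two cliques; these are the same obstruction.
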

\begin{proof}
{\bf 1.} On the contrary, assume that $\alpha^{'}(\Gamma)\geqslant 3$. Then as  $\overline{\Delta}=\overline{L(\Gamma)}$, $\overline{\Delta}$ contains a copy of $K_3$ and it is a contradiction with this fact that $\overline{\Delta}$ is bipartite. \\
{\bf 2.}  On the contrary suppose that $\Gamma$ has a subgraph isomorphic to $C_{5}$ with edge set $A$. Then $\Delta [A]=L(\Gamma)[A]\cong C_{5}$. This is a contradiction as $\overline{\Delta}$ is bipartite.
\end{proof}


 In the following lemmas, regular connected graphs whose eigenvalues are in the interval $[-2, \infty )$, have been verified.
\begin{lemma}[{\cite[Proposition $1.1.5$]{spectb1}}]\label{reg.lin.con}
Let for some graph $\Gamma$, the line graph $L(\Gamma)$ be  regular and connected. Then the graph $\Gamma$ is either regular or semi-regular bipartite.
\end{lemma}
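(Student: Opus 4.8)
The plan is to translate the hypotheses on $L(\Gamma)$ into a local degree condition on $\Gamma$ and then exploit connectivity. First I would record the basic degree formula for line graphs: if $e=uv\in E(\Gamma)$, then the vertex $e$ of $L(\Gamma)$ is adjacent exactly to the edges of $\Gamma$ that meet $e$ in precisely one endpoint, of which there are $deg(u)-1$ through $u$ and $deg(v)-1$ through $v$. Since $\Gamma$ is simple these two families are disjoint, so the degree of $e$ in $L(\Gamma)$ equals $deg(u)+deg(v)-2$. Assuming $L(\Gamma)$ is $r$-regular, this yields the key identity
$$deg(u)+deg(v)=r+2 \quad\text{for every edge } uv\in E(\Gamma).$$
I may assume $\Gamma$ has no isolated vertices, since they contribute nothing to $L(\Gamma)$, and that $\Gamma$ is connected, because connectedness of $L(\Gamma)$ forces connectedness of $\Gamma$ on its non-isolated vertices.

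Next, write $c:=r+2$ and fix a vertex $u_0$ with $a:=deg(u_0)$. The second step is to propagate the identity along paths: every neighbour of $u_0$ has degree $c-a$, every neighbour of such a vertex again has degree $a$, and so on. Because $\Gamma$ is connected, this shows that $deg$ takes at most the two values $a$ and $c-a$, partitioning $V(\Gamma)$ into $V_a=\{v: deg(v)=a\}$ and $V_{c-a}=\{v: deg(v)=c-a\}$.

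Finally I would split into two cases. If $a=c-a$, then every vertex of $\Gamma$ has degree $a$, so $\Gamma$ is $a$-regular and we are done. If $a\neq c-a$, then for any edge $uv$ one cannot have $deg(u)=deg(v)$, since this would force $2\,deg(u)=c$ and hence $deg(u)=c-deg(u)$; consequently every edge joins $V_a$ to $V_{c-a}$, so $\Gamma$ is bipartite with parts $V_a$ and $V_{c-a}$. All vertices in $V_a$ have degree $a$ and all in $V_{c-a}$ have degree $c-a$, and counting the edges from each side gives $|V_a|\cdot a=|V_{c-a}|\cdot(c-a)$, which is precisely the definition of a semi-regular bipartite graph with parameters $(|V_a|,|V_{c-a}|,a,c-a)$.

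The argument is essentially routine once the degree formula is in hand; the only points requiring care are the reduction to the connected, isolated-vertex-free case and the observation that, in the non-regular case, no edge can be monochromatic with respect to the two degree values. This last observation is exactly what turns the two degree classes $V_a$ and $V_{c-a}$ into the two parts of the bipartition, so it is the small step on which the whole dichotomy rests.
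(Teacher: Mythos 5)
Your proof is correct: the degree identity $deg(u)+deg(v)=r+2$ for every edge $uv$, propagated along paths via connectivity, and the observation that in the non-regular case no edge can join two vertices of equal degree, together give exactly the stated dichotomy, and your handling of isolated vertices is the right convention. Note that the paper offers no proof of this lemma at all --- it is quoted from Cvetkovi\'{c}--Rowlinson--Simi\'{c} (Proposition 1.1.5 of the cited book), and your argument is essentially the standard one given there, so there is nothing to compare beyond saying the approaches coincide.
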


 \begin{lemma}[{\cite[Proposition $1.1.9$]{spectb1}}]\label{reg.gen.con}
Let $\Gamma$ be a regular connected generalized line graph. Then $\Gamma$ is either a cocktail party graph or a line graph.
\end{lemma}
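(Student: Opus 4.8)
The plan is to exploit the explicit definition of a generalized line graph recalled above and reduce everything to a short degree count. I would write the given graph as $\Gamma=L(\Lambda;a_1,\ldots,a_n)$ for a base graph $\Lambda$ with vertices $u_1,\ldots,u_n$ and non-negative integers $a_1,\ldots,a_n$, and set $d_i:=\deg_{\Lambda}(u_i)$. If every $a_i=0$, then by definition $\Gamma=L(\Lambda)$ is a line graph and there is nothing to prove, so I would assume that $a_i>0$ for some $i$ and aim to show that in this case $\Gamma$ must be a cocktail party graph.

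The heart of the argument is to compare the degrees of the two kinds of vertices of $\Gamma$. A vertex lying in the copy of $\mathrm{CP}(a_i)$ is adjacent to the other $2a_i-2$ vertices of that cocktail party graph and to each of the $d_i$ vertices $u_iu_j$ of $L(\Lambda)$ coming from an edge at $u_i$, so its degree is $2a_i-2+d_i$. On the other hand, a vertex $u_iu_j$ of $L(\Lambda)$ has degree
\[
(d_i-1)+(d_j-1)+2a_i+2a_j=d_i+d_j-2+2a_i+2a_j,
\]
the first two terms counting its line-graph neighbours and the last two counting the vertices of $\mathrm{CP}(a_i)$ and $\mathrm{CP}(a_j)$ to which it is joined. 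Now suppose $a_i>0$ and that $u_i$ has a neighbour $u_j$ in $\Lambda$. Since $\Gamma$ is $r$-regular, equating the degree $2a_i-2+d_i$ of a $\mathrm{CP}(a_i)$-vertex with the degree of the edge-vertex $u_iu_j$ yields $d_j+2a_j=0$, which is impossible because $d_j\geqslant 1$. Hence every vertex $u_i$ with $a_i>0$ must be isolated in $\Lambda$.

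Finally I would invoke connectedness to finish. If $u_i$ is isolated in $\Lambda$ and $a_i>0$, then the vertices of the attached $\mathrm{CP}(a_i)$ are adjacent only to edge-vertices incident to $u_i$---of which there are none---so this copy of $\mathrm{CP}(a_i)$ is already a whole connected component of $\Gamma$. As $\Gamma$ is connected it can have no further vertices, and therefore $\Gamma=\mathrm{CP}(a_i)=L(K_1;a_i)$ is a cocktail party graph, which completes the proof.

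I expect the only genuinely delicate point to be the bookkeeping in this last step: one must verify that no edge of $\Lambda$ and no second index $k$ with $a_k>0$ can coexist with the isolated vertex $u_i$, since any such feature would contribute a separate connected component and so contradict the connectedness of $\Gamma$. Everything else follows directly from the degree identity displayed above, which is what makes the dichotomy ``line graph versus cocktail party graph'' fall out so cleanly.
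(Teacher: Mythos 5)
Your proof is correct, but note that the paper itself does not prove this lemma at all: it is quoted verbatim from \cite[Proposition 1.1.9]{spectb1}, so your argument is, by necessity, a self-contained substitute for an external citation rather than a variant of an in-paper proof. What you give is essentially the standard elementary argument: writing $\Gamma=L(\Lambda;a_1,\ldots,a_n)$, comparing the degree $2a_i-2+d_i$ of a vertex of $\mathrm{CP}(a_i)$ with the degree $d_i+d_j-2+2a_i+2a_j$ of an edge-vertex $u_iu_j$, and concluding from regularity that $d_j+2a_j=0$ is forced whenever $a_i>0$ and $u_iu_j\in E(\Lambda)$, which is impossible; hence any vertex carrying a positive $a_i$ is isolated in $\Lambda$, and connectedness then collapses $\Gamma$ to the single cocktail party graph $\mathrm{CP}(a_i)$. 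Both degree counts are right, and your closing remark correctly identifies the bookkeeping needed (no other edge of $\Lambda$, no second index $k$ with $a_k>0$, since each would create a separate component). The one boundary case you gloss over is $a_i=1$: there $\mathrm{CP}(1)=\overline{K_2}$ is itself disconnected, so the attached copy is \emph{not} ``a whole connected component'' but two of them; this does not damage the conclusion, since connectedness of $\Gamma$ simply rules that case out (so every nonzero $a_i$ is at least $2$ and the dichotomy stands), but a careful write-up should say so explicitly. The payoff of your approach is that the lemma becomes a half-page of degree counting from the paper's own definition of a generalized line graph, with no appeal to the spectral machinery of \cite{spectb1}.
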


\begin{lemma}[{\cite[Theorem $4.1.5$]{spectb1}}]\label{exceptional}
Let $\Gamma$ be an $r$-regular exceptional graph with $n$ vertices, where $r$ and $n$ are positive integers. Then one of the following layers occurs:\\
{\bf a) } $n=2(r+2)\leqslant 28$, \\
{\bf b) } $n=\frac{3}{2}(r+2)\leqslant 27$ and $\Gamma$ is an induced subgraph of the Schl\"{a}fli graph, or  \\
{\bf c) } $n=\frac{4}{3}(r+2)\leqslant 16$ and $\Gamma$ is an induced subgraph of the Clebsch graph.
\end{lemma}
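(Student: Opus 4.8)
The plan is to turn the spectral hypothesis into geometry via root systems and then squeeze out the numerics using regularity. Since every eigenvalue of $\Gamma$ is at least $-2$, the matrix $A(\Gamma)+2I$ is positive semidefinite, so it is the Gram matrix of vectors $x_1,\dots,x_n$ in a Euclidean space with $\langle x_i,x_i\rangle = 2$ and $\langle x_i,x_j\rangle\in\{0,1\}$ for $i\ne j$, the value $1$ occurring exactly when $v_iv_j\in E(\Gamma)$. Thus the $x_i$ are vectors of squared length $2$ meeting only at $90^\circ$ or $60^\circ$; being equal-norm (``simply laced''), the irreducible root system they generate is of type $A_m$, $D_m$, $E_6$, $E_7$, or $E_8$. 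By the theorem of Cameron et al. \cite{cameron}, an exceptional graph cannot be represented inside a system of type $A_m$ or $D_m$ (those give generalized line graphs), so the relevant system is one of $E_6,E_7,E_8$. In particular
$d:=\operatorname{rank}\bigl(A(\Gamma)+2I\bigr)=n-m_{-2}\in\{6,7,8\}$, where $m_{-2}$ is the multiplicity of the eigenvalue $-2$; these three values $d=6,7,8$ are precisely layers c), b), a).

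Next I would bring in regularity. As $\Gamma$ is connected and $r$-regular, $\mathbf{1}$ is an eigenvector of $A(\Gamma)$ for $r$, and a direct computation with $G:=A(\Gamma)+2I$ produces two clean identities for the sum vector $s:=x_1+\cdots+x_n$: first $\langle x_i,s\rangle=\sum_j G_{ij}=r+2$ for every $i$, so all representing vectors sit at one common height relative to $s$; and second $|s|^2=\mathbf{1}^{\top}G\,\mathbf{1}=nr+2n=n(r+2)$. Dividing, the single quantity that the theorem must pin down is
\[
\frac{n}{r+2}\;=\;\frac{|s|^2}{(r+2)^2},
\]
and the whole statement reduces to showing this ratio equals $2,\tfrac32,\tfrac43$ according as $d=8,7,6$.

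This is where the arithmetic of $E_6,E_7,E_8$ does the work. Because $s$ is a sum of roots it lies in the root lattice, and each representing root $x_i$ has the fixed inner product $r+2$ with it; equivalently all the $x_i$ occupy a single ``level'' of the lattice in the direction of $s$. One then classifies the lattice directions along which a spanning set of mutually $\{0,1\}$-related roots can lie at one level and close up into a regular graph: these are the distinguished (minuscule) directions of $E_6$ and $E_7$ and the highest-root direction of $E_8$, which force $|s|^2/(r+2)^2$ to equal $\tfrac43,\tfrac32,2$ respectively. The host configurations, in which the level is filled completely, are the Clebsch graph ($d=6$, $16$-regular on $16$ vertices, $\tfrac43\cdot 12=16$) and the Schl\"{a}fli graph ($d=7$, $16$-regular on $27$ vertices, $\tfrac32\cdot 18=27$); every regular exceptional graph of the respective dimension embeds into one of these as an induced subgraph, which yields both the bounds $n\le 16$, $n\le 27$ and the induced-subgraph assertions of c) and b). In dimension $8$ the same level analysis caps the size at $n\le 28$ (layer a).

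The genuine obstacle is the last step: extracting the admissible levels and proving the embedding/maximality claims is not a short computation but the finite classification underlying \cite[Theorem $4.1.5$]{spectb1} -- ultimately the Cameron--Goethals--Seidel--Shult root-system theorem together with the Bussemaker--Cvetkovi\'c--Seidel enumeration of the $187$ exceptional regular graphs into these three families. I would therefore present the representation-and-regularity reduction above as the conceptual core, which can be carried out cleanly, and defer the determination of exactly which induced subgraphs of the Clebsch and Schl\"{a}fli graphs occur to that classification, since reproducing it in full requires the (computer-assisted) case analysis that produced the list of maximal exceptional graphs.
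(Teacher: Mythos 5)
First, a point of reference: the paper does not prove this lemma at all --- it is quoted verbatim from \cite[Theorem $4.1.5$]{spectb1}, so there is no internal proof to compare against; your attempt has to be judged against the cited source's argument, whose skeleton you have in fact reproduced correctly. Your reduction is sound as far as it goes: $A(\Gamma)+2I\succeq 0$ is the Gram matrix of norm-$\sqrt{2}$ vectors with pairwise inner products in $\{0,1\}$; connectedness makes the generated root system irreducible; by Cameron--Goethals--Seidel--Shult, representability in $A_m$ or $D_m$ is excluded for an exceptional graph, forcing the system to be $E_6$, $E_7$ or $E_8$ and $d=\operatorname{rank}(A+2I)\in\{6,7,8\}$; and the two identities $\langle x_i,s\rangle=r+2$ and $|s|^2=n(r+2)$, hence $n/(r+2)=|s|^2/(r+2)^2$, are correct elementary computations.

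The genuine gap is exactly where you place it, but it is larger than your phrasing suggests: the assertion that the vector $w:=s/(r+2)$ must point along a minuscule direction of $E_6$ or $E_7$ or the highest-root direction of $E_8$, forcing $|w|^2\in\{\tfrac43,\tfrac32,2\}$, is not a routine ``level classification'' that one can wave at. A priori $w$ is merely a rational vector in the dual of the $\mathbb{Z}$-span of the $x_i$ (which may be a proper finite-index sublattice of the $E_d$-lattice, so no integrality argument pins down $|w|^2$), and nothing in your setup rules out other values of $n/(r+2)$ for a given $d$; establishing the three ratios, the bound $n\leqslant 28$ in the $E_8$ case (via the pairing $x\leftrightarrow w-x$, which itself presupposes $|w|^2=2$ so that $w-x$ is a root), and the induced-subgraph embeddings into the Schl\"{a}fli and Clebsch graphs \emph{is} the content of the theorem. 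Historically this rests on the Bussemaker--Cvetkovi\'{c}--Seidel enumeration \cite{nn} and the maximal-exceptional-graph analysis in \cite{spectb1}, so deferring it means your text is an (accurate) exposition of the known proof's first half rather than a proof. One factual slip should also be corrected: the Clebsch graph is $10$-regular on $16$ vertices (as stated in Section \ref{sec2} of the paper; indeed $\tfrac43\cdot 12=16$ gives $r=10$), not ``$16$-regular on $16$ vertices,'' which is impossible.
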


\begin{lemma}\label{exceptional2}
Suppose $\Gamma$ is an $r$-regular exceptional graph with $n$ vertices, where $r$ and $n$ are positive integers. Then $\overline{\Gamma}$ is non-bipartite.
\end{lemma}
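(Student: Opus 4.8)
The plan is to translate the conclusion into a statement about cliques and then exploit the classification in Lemma~\ref{exceptional}. Observe first that $\overline{\Gamma}$ is bipartite if and only if $V(\Gamma)$ admits a partition $V(\Gamma)=A\sqcup B$ in which both $A$ and $B$ induce complete subgraphs of $\Gamma$, since each colour class of a proper $2$-colouring of $\overline{\Gamma}$ is exactly a clique of $\Gamma$. So I would assume, for contradiction, that such a partition exists, and aim to force $\Gamma$ to be either disconnected or a line graph, each of which contradicts $\Gamma$ being exceptional (an exceptional graph is connected and is not a generalized line graph, while $K_n=L(K_{1,n})$ and all line graphs are generalized line graphs).

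First I would extract numerical constraints from $r$-regularity. Since $\Gamma$ is exceptional it is not complete, so $r\leqslant n-2$. Writing $a=|A|$ and $b=|B|$ and counting the edges between $A$ and $B$ in two ways gives $a(r-a+1)=b(r-b+1)$, i.e. $(a-b)\bigl(r-(a+b)+1\bigr)=0$; as $a+b=n\geqslant r+2>r+1$ this forces $a=b=n/2=:m$, and every vertex then has exactly $r-m+1\geqslant 0$ neighbours in the opposite part. In particular $r\geqslant m-1$, so $n\leqslant 2r+2$. This already kills layer (a) of Lemma~\ref{exceptional}, where $n=2(r+2)=2r+4>2r+2$.

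Next I would bound $m$ by the clique number. Because a clique of $\Gamma$ is a clique of the ambient graph, in layer (b) we have $m\leqslant\omega(Sch_{27})=6$, and in layer (c) we have $m\leqslant\omega(\mathrm{Clebsch})$. Establishing $\omega(\mathrm{Clebsch})=5$ for the $10$-regular Clebsch graph is the one fact I must pin down (e.g. via its description as the complement of the triangle-free $(16,5,0,2)$ strongly regular graph, whose independence number equals $5$); I expect this to be the main obstacle, as everything else is bookkeeping. Feeding $m\leqslant 6$, resp. $m\leqslant 5$, together with $n=2m$ and the layer identities $n=\tfrac32(r+2)$, resp. $n=\tfrac43(r+2)$, plus the integrality and parity of $r$, restricts $(n,m,r)$ to finitely many possibilities: $(12,6,6)$ and $(6,3,2)$ in layer (b), and $(8,4,4)$ and $(4,2,1)$ in layer (c).

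Finally I would dispose of these cases, noting that in each of them $r=m-1$ or $r=m$. If $r=m-1$, then no edges run between $A$ and $B$, so $\Gamma$ is disconnected, contradicting connectivity of an exceptional graph; this handles $(6,3,2)$ and $(4,2,1)$. If $r=m$, then each vertex has exactly one neighbour across, so the bipartite part between $A$ and $B$ is a perfect matching, whence $\Gamma$ is two copies of $K_m$ joined by a perfect matching, which is precisely $L(K_{2,m})$; being a line graph it is not exceptional, contradicting the hypothesis and handling $(12,6,6)$ and $(8,4,4)$. Thus no such partition exists, and $\overline{\Gamma}$ is non-bipartite.
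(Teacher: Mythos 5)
Your proof is correct, and it diverges from the paper's in a substantive way. Both arguments run on the trichotomy of Lemma~\ref{exceptional} and both use $\omega(Sch_{27})=6$, but the paper disposes of layer (a) as immediate, treats layer (b) in one terse line, and handles layer (c) by consulting the explicit list of the three layer-(c) graphs in \cite[Proposition 8.4]{nn} and exhibiting an induced $C_5$ in each (an induced $C_5$ in $\Gamma$ is an induced $C_5$ in $\overline{\Gamma}$, so $\overline{\Gamma}$ is non-bipartite). You avoid that external table entirely: your edge-count identity $(a-b)\bigl(r-(a+b)+1\bigr)=0$, forcing the two cliques to have equal size $m$ with cross-degree $r-m+1$, is not in the paper, and it is what makes your treatment uniform --- it kills layer (a) outright, removes the odd-order and unequal-size cases in layers (b) and (c) that the paper glosses over, and reduces everything to $r=m-1$ (disconnected, contradicting connectivity of exceptional graphs) or $r=m$ (the graph is $K_m$'s joined by a perfect matching, i.e.\ $L(K_{2,m})$, a line graph, hence not exceptional --- incidentally these are exactly the forbidden graphs $F(m)$ of Section~\ref{sec3}). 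The price is the one ingredient the paper never states: $\omega$ of the $10$-regular Clebsch graph equals $5$, which you correctly flag as the load-bearing fact and correctly justify (its complement is the triangle-free $(16,5,0,2)$ strongly regular graph, whose independence number is $5$; note the Hoffman ratio bound only gives $6$, so one does need the exact combinatorial value). In short: the paper's route is shorter given the citation to \cite{nn}; yours is self-contained, fills in the details the paper leaves implicit in layers (a) and (b), and replaces inspection of a classification table by a structural argument.
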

\begin{proof}
Using Lemma \ref{exceptional}, one of the following occurs:\\
{\bf 1.} $\Gamma$ lies in the first layer. Then as $n=2(r+2)$, it is clear that $\overline{\Gamma}$ is non-bipartite. \\
{\bf 2.}  $\Gamma$ is a graph in the second layer. Then as $n=\frac{3}{2} (r+2)$ and $\omega (\Gamma )\leqslant \omega (Sch_{27})=6$, we deduce that $\overline{\Gamma}$ is non-bipartite.\\
{\bf 3.}   $\Gamma$ is a graph in the third layer. There exist exactly three graphs in this layer, given in \cite[ Proposition $8.4$ ]{nn}. Thus it is easy to see that $\Gamma$ has an induced subgraph isomorphic to $C_5$ and hence $\overline{\Gamma}$ is non-bipartite. This completes the proof.
\end{proof}

Now we state some results on character-graphs of a finite group $G$.

\begin{lemma}[{\cite[Theorem $1.2$]{akhlaghi.reg}}]\label{akh.reg}
Let $G$ be a finite group and $|\rho(G)|\geqslant 3$. The character-graph $\Delta(G)$  is $k$-regular for some integer $k$ if and only if either $\Delta(G)\cong \overline{K_3}$ or $\overline{\Delta(G)}$ is a bipartite graph.
\end{lemma}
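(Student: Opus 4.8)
The plan is to prove the two implications separately, with the forward direction carrying essentially all of the content. Throughout I would use the fact recorded in the introduction that $\overline{\Delta(H)}$ is bipartite whenever $H$ is solvable; consequently a regular character-graph can fail to have bipartite complement only when $G$ is non-solvable, and this is precisely where Lemma \ref{notri} and Proposition B are meant to enter. The reverse implication is the easy one: if $\Delta(G)\cong\overline{K_3}$ then it is $0$-regular and there is nothing to check, while if $\overline{\Delta(G)}$ is bipartite then $\rho(G)$ splits into two cliques of $\Delta(G)$ and one reads off regularity from the structural description of character-graphs with bipartite complement. Since only the forward implication is used later in the paper, I keep this direction short.

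For the forward implication, assume $\Delta(G)$ is $k$-regular with $|\rho(G)|\geq 3$ and that $\overline{\Delta(G)}$ is not bipartite; the target is $\Delta(G)\cong\overline{K_3}$. By the solvable case $G$ is non-solvable, so there is a normal subgroup $N$ with $G/N\cong S$ for a non-abelian simple group $S$ (after passing to a convenient section of $G/R(G)$). The first step is to run Lemma \ref{notri} over all $\theta\in\mathrm{Irr}(N)$: outside the two exceptional isomorphism types $S\cong A_5$ and $S\cong\mathrm{PSL}_2(8)$, some $\chi\in\mathrm{Irr}(G\,|\,\theta)$ has $\chi(1)/\theta(1)$ divisible by two distinct primes of $\pi(S)$, which plants edges of $\Delta(G)$ inside $\pi(S)$ and forces the non-defining primes of $\pi(S)$ to cluster into cliques. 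Lemma \ref{11.29} keeps all the relevant degree-ratios dividing $|G:N|$, and Proposition B bounds the extra primes coming from field and diagonal automorphisms (that is, from $\mathrm{Out}(S)$) against $\pi(S)$, so that these cannot silently restore regularity. I would combine these to force $\Delta(G)$ to be a disjoint union of cliques, one of which is the single vertex carried by the defining characteristic of $S$.

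The argument then closes graph-theoretically. Since $\Delta(G)$ is a disjoint union of cliques, its complement is complete multipartite, and non-bipartiteness of $\overline{\Delta(G)}$ means exactly that there are at least three parts; by the known bound that a character-graph has at most three connected components there are then exactly three cliques. One of them is the singleton given by the defining characteristic, so $k$-regularity forces $k=0$ and all three cliques to be single vertices, whence $|\rho(G)|=3$ and $\Delta(G)\cong\overline{K_3}$. This is consistent with the two prototypes $A_5$ and $\mathrm{PSL}_2(8)$, which are precisely the exceptional cases of Lemma \ref{notri} and whose own character-graphs are $\overline{K_3}$.

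I expect the crux to be the reduction to this disjoint-union-of-cliques picture with a forced singleton. One must simultaneously dispose of the exceptional extendibility cases of Lemma \ref{notri} (where edges inside $\pi(S)$ are not produced), use Proposition B to prevent the primes contributed by $\mathrm{Out}(S)$ and by the action of $G$ on $N$ from reassembling into a non-trivial regular configuration, and confirm that for these groups the non-bipartiteness of the complement cannot arise from an odd cycle longer than a triangle. Once the graph is known to be a disjoint union of cliques containing a singleton, regularity finishes the proof immediately.
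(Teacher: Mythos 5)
First, a point of comparison: the paper contains no proof of this statement at all --- it is imported verbatim from \cite[Theorem 1.2]{akhlaghi.reg} and used only in the forward direction (in Lemma \ref{parta} and in the proof of Theorem A). So your proposal must stand on its own, and it does not. The decisive gap is in the forward direction. The entire content of the theorem is to show that a \emph{connected} regular $\Delta(G)$ has bipartite complement, and your argument never engages with the connected case: you assert that Lemma \ref{notri}, Lemma \ref{11.29} and Proposition B ``force $\Delta(G)$ to be a disjoint union of cliques, one of which is a singleton,'' but no derivation is offered, and none of these tools produces disconnectedness --- Lemma \ref{notri} only \emph{plants} edges (indeed triangles) in the graph, and nothing you say rules out, for instance, a connected regular character-graph of odd girth at least $5$ for a non-solvable group; that exclusion is the theorem, not a consequence of your set-up. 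Moreover, your reduction ``$G$ non-solvable $\Rightarrow$ there is $N\lhd G$ with $G/N$ non-abelian simple'' is false as stated: $G=A_5\wr C_2$ has no such normal subgroup, and its maximal normal subgroup has quotient $C_2$. One must instead pass to $S^k\cong MC/C\lhd G/C\leq\mathrm{Aut}(S^k)$ as in Lemma \ref{almost} of this paper, and Proposition B is then applied to bound $|\pi(\mathrm{Out}(S))\setminus\pi(S)|$ against $|\pi(S)|$ relative to a putative bipartition --- not to ``prevent restored regularity.'' Your closing steps (at most three components by \cite{manz}, hence exactly three cliques, a singleton forcing $k=0$ and $\overline{K_3}$) are fine once disconnectedness and the union-of-cliques structure are in hand, but that is precisely what was never proved; even the singleton claim tacitly assumes $S\cong\mathrm{PSL}_2(q)$-type structure, which requires the Lewis--White classification \cite{l6}, not merely Lemma \ref{notri}.

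Your reverse direction is not merely unproved but false as you argue it, and in fact the biconditional as transcribed in this paper fails literally in that direction: a bipartite complement means only that $\rho(G)$ is covered by two cliques of $\Delta(G)$, which says nothing about equality of degrees, so nothing can be ``read off.'' Concretely, take $G=S_4\times(C_{11}\rtimes C_5)$; then $\mathrm{cd}(G)=\{1,2,3,5,10,15\}$, so $\Delta(G)$ is the path $2-5-3$: here $|\rho(G)|=3$ and $\overline{\Delta(G)}$ is bipartite, yet $\Delta(G)$ is not regular. (Solvable groups in general have bipartite complement by \cite{a1} and are usually not regular.) The only converse that can be meant is the trivial one in which $\overline{\Delta(G)}$ is additionally required to be regular --- automatic from regularity of $\Delta(G)$, since the complement of a $k$-regular graph on $n$ vertices is $(n-1-k)$-regular --- and a careful write-up should have flagged this defect in the statement rather than sketched a bogus deduction. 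The present paper is insulated from the issue because it only ever invokes the implication from regularity to bipartiteness of the complement.
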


\begin{lemma}\label{simcutedge}
Let $S$ be a non-abelian simple group. Then either $\Delta(S)$ is connected and has no cut edges or $S$ is isomorphic to one of the groups $\mathrm{M}_{11}$, $\mathrm{PSL}_{3}(4)$, $^{2}\mathrm{B}_{2}(q^2)$,  where $q^2=2^{2m+1}$ for some integer $m\geqslant 1$ and $r:=q^2-1$ is a Mersenne prime and  $\mathrm{PSL}_{2} (q)$,  where $q$ is a prime power.
\end{lemma}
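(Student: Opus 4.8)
The statement is a classification of the non-abelian simple groups whose character-graph fails to be connected and bridgeless, so the natural route is a case analysis over the families supplied by the classification of finite simple groups. The plan is to separate the two possible obstructions---disconnectedness and the presence of a cut edge---and to treat them with different tools. For the second obstruction it helps to record first that in each exceptional group the offending edge is a \emph{pendant} edge, so after settling connectivity the real task is to locate the primes $p\in\rho(S)$ that are joined to only one other vertex of $\Delta(S)$.

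First I would dispose of connectivity. It is known that $\Delta(S)$ is connected for every non-abelian simple group except $S\cong\mathrm{PSL}_2(q)$, where the characteristic prime is isolated and the graph breaks into the cliques carried by the degrees $q-1$ and $q+1$ (or their halves when $q$ is odd). Since $\mathrm{PSL}_2(q)$ already occurs in the list, it then remains to prove that for every other simple $S$ the graph $\Delta(S)$ is connected and has no cut edge; equivalently, that every edge of $\Delta(S)$ lies on a cycle.

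The case analysis would then proceed family by family. For the alternating groups $A_n$ one uses that $\Delta(A_n)$ is complete once $n$ is large enough, hence bridgeless, and checks the finitely many small cases ($A_5\cong\mathrm{PSL}_2(4)$, $A_6\cong\mathrm{PSL}_2(9)$, $A_7$, $A_8\cong\mathrm{PSL}_4(2)$) by hand, obtaining complete graphs except where they coincide with a $\mathrm{PSL}_2(q)$. For the sporadic groups the degrees are explicitly tabulated, so one inspects each graph directly; the unique one carrying a cut edge is $\mathrm{M}_{11}$, in which the prime $3$ occurs only in the degree $45=3^2\cdot 5$ and is thus pendant at $5$. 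The bulk of the argument is the groups of Lie type, handled through their generic degree formulas together with Zsigmondy's theorem: the unipotent and semisimple degrees force the primes dividing distinct cyclotomic factors $\Phi_d(q)$ to be pairwise joined, so that large blocks of $\Delta(S)$ are complete and contribute no bridge. A cut edge can persist only when some factor contributes a single prime that is linked to the remainder of the graph through exactly one degree. Tracing this condition produces the last two exceptions: $\mathrm{PSL}_3(4)$, where the characteristic prime $2$ meets the graph only through the degree $20=2^2\cdot 5$, and the Suzuki groups $^{2}\mathrm{B}_{2}(q^2)$ with $q^2=2^{2m+1}$, where the prime $2$ is attached to $\rho(S)$ solely via a degree of the form $2^{m}(q^2-1)$, so that the attaching edge is a bridge precisely when $q^2-1$ reduces to a single prime, that is, when $q^2-1=2^{2m+1}-1$ is a Mersenne prime.

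The main obstacle is this final step. Unlike the alternating and sporadic cases, the Lie type groups must be treated uniformly, and one must verify that in every family other than $\mathrm{PSL}_2(q)$, $\mathrm{PSL}_3(4)$ and the Suzuki groups the pieces carried by different $\Phi_d(q)$ genuinely overlap in enough degrees to destroy every potential bridge. Controlling these divisibility relations---and in particular separating the generic situation from the sporadic numerical coincidences that create a pendant prime---is where the care is needed, and it is exactly here that the Mersenne-prime constraint for $^{2}\mathrm{B}_{2}(q^2)$ and the isolated behaviour of $\mathrm{PSL}_3(4)$ emerge.
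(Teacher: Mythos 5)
Your overall route --- reduce to the classification of finite simple groups and inspect the structure of $\Delta(S)$ family by family --- is in spirit the route the paper takes, and your list of exceptions together with the structural reason for each is correct: in $\mathrm{M}_{11}$ the prime $3$ is pendant at $5$ via the degree $45$; in $\mathrm{PSL}_3(4)$ the prime $2$ is pendant at $5$ via the degree $20$; and in $^{2}\mathrm{B}_{2}(q^2)$ the prime $2$ is joined to the rest of the graph only through $\pi(q^2-1)$ via the degree $2^m(q^2-1)$, so a bridge occurs exactly when $q^2-1$ is a Mersenne prime (since $q^4+1=(q^2+\sqrt{2q^2}+1)(q^2-\sqrt{2q^2}+1)$ gives $|\pi(q^4+1)|\geqslant 2$, no edge away from $2$ can be a bridge). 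This matches the paper's conclusion exactly.

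The genuine gap is the one you flag yourself: the groups of Lie type. The paper performs no generic analysis of cyclotomic factors or Zsigmondy primes at all; its entire proof consists of citing White's determination of the degree graphs of all non-abelian simple groups \cite{whitesim} and reading the lemma off from that classification (disconnectedness forces $S\cong\mathrm{PSL}_2(q)$; a cut edge in a connected $\Delta(S)$ forces one of the three listed groups, with the Mersenne condition extracted for the Suzuki family as above). What you propose for Lie type --- showing that the degrees tie the primes of distinct factors $\Phi_d(q)$ together densely enough to destroy every potential bridge outside the three exceptional families --- is precisely the content of that cited paper; as written, your paragraph is a plan with an acknowledged obstacle rather than an argument, and executing it from scratch would amount to reproving White's theorem. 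The efficient completion of your proof is simply to invoke that classification. One factual slip to correct: $\Delta(A_8)$ is \emph{not} complete, since no character degree of $A_8\cong\mathrm{PSL}_4(2)$ is divisible by $6$, so $2$ and $3$ are non-adjacent; the graph is $K_4$ minus an edge, which is still bridgeless, so your conclusion survives even though the claim as stated is false.
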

\begin{proof}
Suppose that $\Delta(S)$ is disconnected or contains at least one cut edge. If $\Delta(S)$ is disconnected, then using \cite{whitesim}, $S\cong \mathrm{PSL}_{2}(q)$ where $q$ is a prime power. Now assume that $\Delta(S)$ is connected and has at least one cut edge. Then as $\Delta(S)$ has at least one cut edge, using \cite{whitesim}, it is easy to see that $S$ is isomorphic to $\mathrm{M}_{11}$, $\mathrm{PSL}_{3}(4)$ or $^{2}\mathrm{B}_{2}(q^2)$ where $q^2=2^{2m+1}$ for some positive integer $m$. Since $|\pi(q^4+1)|\geqslant 2$, we deduce that $r:=q^2-1$ is a Mersenne prime and edge between $2$ and $r$ is a cut edge. This completes the proof.
\end{proof}

Now we state some properties of  $\mathrm{PSL}_2(q)$ where $q$ is a prime power. We will use Dickson's list of the subgroups of $\mathrm{PSL}_2(q)$, which can be found  in  {\cite[$II.8.27$]{huppert.group}}. The structure of the character-graph of  $\mathrm{PSL}_2(q)$ has been determined as follows.
\begin{lemma}[{\cite{whitesim}}]\label{deltaS}
Let $S\cong\mathrm{PSL}_2(q)$, where $q=u^{\alpha}\geqslant 4$  is a prime power.\\
{\bf 1.} If $q$ is even, then $\Delta(S)$ has three connected components, $\{2\}$, $\pi(q-1)$ and $\pi(q+1)$, and each component is a complete graph.\\
{\bf 2.} If $q> 5$ is odd, then $\Delta(S)$ has two connected components $\{u\}$ and $\pi((q-1)(q+1))$.\\
{\bf a.}  The connected component $\pi((q-1)(q+1))$ is a complete graph if and only if $q-1$ or $q+1$ is a power of $2$.\\
{\bf b.} If neither of $q-1$ or $q+1$ is a power of $2$, then $\pi((q-1)(q+1))$ can be partitioned as $\{2\}\cup M\cup P$, where $M=\pi(q-1)\setminus\{2\}$ and $P=\pi(q+1)\setminus\{2\}$ are both nonempty sets. The subgraph of $\Delta(S)$ corresponding to each of the subsets $M$ and $P$ is complete, all primes are adjacent to $2$, and no prime in $M$ is adjacent to any prime in $P$.
\end{lemma}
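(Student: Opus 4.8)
The plan is to read off the whole component-and-clique structure from the known list of irreducible character degrees of $S=\mathrm{PSL}_2(q)$, since adjacency in $\Delta(S)$ is entirely determined by $\mathrm{cd}(S)$. Recall that for $q=u^{\alpha}$ even one has $\mathrm{cd}(S)=\{1,\,q-1,\,q,\,q+1\}$, while for $q$ odd one has $\mathrm{cd}(S)=\{1,\,(q+\varepsilon)/2,\,q-1,\,q,\,q+1\}$, where $\varepsilon=1$ if $q\equiv 1\pmod 4$ and $\varepsilon=-1$ if $q\equiv 3\pmod 4$; these can be taken from \cite{whitesim} (or recovered from Dickson's subgroup structure via \cite{huppert.group}). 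The three arithmetic facts I would use repeatedly are that the Steinberg degree $q=u^{\alpha}$ is a prime power, that $\gcd(q-1,q+1)$ equals $1$ for $q$ even and $2$ for $q$ odd, and that $\pi\big((q+\varepsilon)/2\big)\subseteq\pi(q+\varepsilon)$, so the half-degree contributes no prime outside $\pi(q-1)\cup\pi(q+1)$.

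First I would settle the even case. Here $\rho(S)=\{2\}\cup\pi(q-1)\cup\pi(q+1)$ is a disjoint union, since $q\pm 1$ are odd and coprime. The only degree divisible by $2$ is the prime power $q=2^{\alpha}$, so $2$ is an isolated vertex. Any two primes of $\pi(q-1)$ have product dividing $q-1\in\mathrm{cd}(S)$, so $\pi(q-1)$ is a clique, and likewise $\pi(q+1)$. For $p\in\pi(q-1)$ and $p'\in\pi(q+1)$ the product $pp'$ divides none of $1,q-1,q,q+1$, as it cannot divide $q\mp 1$ by coprimality nor the prime power $q$. This yields the three complete components of part~1.

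For $q>5$ odd, let $u$ be the unique prime dividing $q=u^{\alpha}$. Since $u$ divides only the prime power $q$ among the degrees, $\{u\}$ is an isolated vertex, and the remaining vertex set is $\pi((q-1)(q+1))=\pi(q-1)\cup\pi(q+1)$, whose single common prime is $2$. To obtain part~2 I would split on whether some $q\pm 1$ is a power of $2$. If $q-1$ is a $2$-power then $\pi(q-1)=\{2\}\subseteq\pi(q+1)$, so $\pi((q-1)(q+1))=\pi(q+1)$ is a clique; symmetrically if $q+1$ is a $2$-power. This is direction~(a). Otherwise both $M=\pi(q-1)\setminus\{2\}$ and $P=\pi(q+1)\setminus\{2\}$ are nonempty and, together with $2$, partition the component; each of $M,P$ is a clique, and $2$ is joined to every $p\in M$ since $p$ odd and $q-1$ even give $2p\mid q-1$, and to all of $P$ via $q+1$.

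The crux is the non-adjacency of $M$ and $P$, which is precisely where the half-degree must be controlled. For $p\in M$ and $p'\in P$ I would verify that $pp'$ divides none of the five degrees: not $q$, since $p,p'\neq u$; not $q-1$, because $p'$ is an odd prime dividing $q+1$ and $\gcd(q-1,q+1)=2$ forces $p'\nmid q-1$, and symmetrically $p\nmid q+1$; and not $(q+\varepsilon)/2$, because its prime divisors lie in $\pi(q+\varepsilon)$, hence on a single side, so $p$ and $p'$ cannot both divide it. Thus no $M$--$P$ edge exists, giving direction~(b) and, by contraposition, the converse in~(a), since nonempty $M$ and $P$ produce a non-complete component. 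Connectivity of $\pi((q-1)(q+1))$ in every subcase is immediate, as $2$ is adjacent to all other primes of the component.
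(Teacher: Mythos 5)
Your proof is correct, and since the paper gives no proof of this lemma at all---it is imported verbatim from White \cite{whitesim}---your direct verification from the degree list $\mathrm{cd}(\mathrm{PSL}_2(q))=\{1,(q+\varepsilon)/2,q-1,q,q+1\}$ (resp.\ $\{1,q-1,q,q+1\}$ for $q$ even, both valid only because $q>5$ in the odd case) is essentially the same computation by which the cited source obtains the graph. You also correctly identify and handle the only delicate point, namely that every prime divisor of the half-degree $(q+\varepsilon)/2$ lies in $\pi(q+\varepsilon)$, so it can never create an edge between $M$ and $P$.
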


\begin{lemma}[{\cite[Lemma $4.5$]{l5}}]\label{index}
Let $S\cong\mathrm{PSL}_2(q)$ where $q=u^{\alpha}\geqslant\, 5$, such that  $u$ is a prime, $\alpha\geqslant\, 2$, and $q\neq\, 9$. If $S\leq\, G\leq\, \mathrm{Aut}(S)$, then $G$ has irreducible characters of degrees $(q+1)|G:G\cap\mathrm{PGL}_2(q)|$ and $(q-1)|G:G\cap\mathrm{PGL}_2(q)|$.
\end{lemma}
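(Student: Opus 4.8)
The plan is to realise the two required characters by Clifford theory applied to the normal subgroup $S\unlhd G$, starting from the ordinary character table of $S=\mathrm{PSL}_2(q)$. Recall (see \cite{huppert.group, whitesim}) that $S$ has irreducible characters of degree $q+1$, the principal-series characters indexed by the nontrivial characters of the split torus of order $(q-1)/\gcd(2,q-1)$ taken up to inversion, and irreducible characters of degree $q-1$, the discrete-series characters indexed in the same way by the characters of the nonsplit torus of order $(q+1)/\gcd(2,q-1)$. Write $D:=G\cap\mathrm{PGL}_2(q)$ and $f:=|G:D|$. Since $\mathrm{PGL}_2(q)\unlhd\mathrm{Aut}(S)=\mathrm{PGL}_2(q)\rtimes\langle\sigma\rangle$, where $\sigma:a\mapsto a^{u}$ is a field automorphism of order $\alpha$, the quotient $G/D$ embeds in $\langle\sigma\rangle$ and is therefore cyclic of order $f\mid\alpha$, while $D/S$ embeds in $\mathrm{PGL}_2(q)/S\cong C_{\gcd(2,q-1)}$ and so is cyclic of order at most $2$.

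I would first reduce the theorem to a single existence assertion. Suppose $\theta\in\mathrm{Irr}(S)$ has degree $q+1$ and satisfies $I_G(\theta)=D$. Then $\theta$ is $D$-invariant, and since $D/S$ is cyclic, $\theta$ extends to some $\widehat{\theta}\in\mathrm{Irr}(D)$ (extension over a cyclic quotient, \cite[Corollary $11.22$]{i}). The Clifford correspondence \cite[Theorem $6.11$]{i} together with Gallagher's theorem \cite[Corollary $6.17$]{i} shows that $\widehat{\theta}$ lies in $\mathrm{Irr}(D\,|\,\theta)$ and hence that $\mathrm{Ind}_D^G(\widehat{\theta})\in\mathrm{Irr}(G)$, of degree $|G:D|\,\theta(1)=f(q+1)=(q+1)|G:G\cap\mathrm{PGL}_2(q)|$; this is exactly the first desired degree, and an analogous $\theta$ of degree $q-1$ produces the second (consistent with Lemma \ref{11.29}, since $f\mid|G:S|$). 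Thus everything comes down to producing, for each degree, a character of $S$ whose inertia group in $G$ is precisely $D$, that is, a character invariant under the diagonal automorphism possibly present in $D$ but moved to $f-1$ distinct characters by the nontrivial elements of the cyclic field-automorphism group $G/D$.

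The construction of such a character is the heart of the matter and the main obstacle. The automorphism $\sigma$ acts on the principal- and discrete-series characters through the $u$-power (Frobenius) map on the indexing torus characters, and I would force a free $G/D$-orbit by invoking Zsigmondy's theorem \cite{zsig}. For the degree $q+1$ case I would take a torus character whose order is a primitive prime divisor of $u^{\alpha}-1$, so that $u$ has multiplicative order $\alpha$ modulo it; for the degree $q-1$ case I would use a primitive prime divisor of $u^{2\alpha}-1$ acting on the nonsplit torus of order $q+1$. In both cases this pins down the length of the Frobenius orbit and hence forces the stabiliser in $G/D\leqslant C_\alpha$ to be trivial. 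The delicate point, which the argument must handle carefully, is the inversion identification $\lambda\leftrightarrow\lambda^{-1}$ built into the indexing of these characters: one must choose the torus character so that $\lambda^{-1}$ does not re-enter the Frobenius orbit prematurely, and simultaneously so that the resulting character is invariant under the diagonal automorphism in $D$ (which is arranged by taking it to be the restriction of a degree $q+1$, respectively $q-1$, character of $\mathrm{PGL}_2(q)$). The hypotheses $\alpha\geqslant 2$ and $q\neq 9$ are precisely what rule out the Zsigmondy exceptions and the small-field degeneracies, guaranteeing that the required primitive prime divisors exist and that the argument applies uniformly.
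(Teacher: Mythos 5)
The paper does not actually prove this lemma---it is quoted verbatim from Lewis--White \cite[Lemma 4.5]{l5}---so your argument can only be judged on its own merits. Your reduction is the standard and correct one: all principal- and discrete-series characters of $S$ are $\mathrm{PGL}_2(q)$-invariant, so it suffices to exhibit a character of each degree $q\pm 1$ whose inertia group in $G$ is exactly $D=G\cap\mathrm{PGL}_2(q)$; extendibility over the cyclic group $D/S$ plus the Clifford correspondence then produces irreducible characters of $G$ of degrees $(q\pm 1)|G:D|$. The gap lies entirely in your Zsigmondy-based construction of such a character.

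For the degree $q+1$ case, suppose $\alpha$ is even and $\ell$ is \emph{any} primitive prime divisor of $u^{\alpha}-1$. Then $u^{\alpha/2}$ has order $2$ in $(\mathbb{Z}/\ell\mathbb{Z})^{\times}$ (its square is $u^{\alpha}\equiv 1$, and it is not $1$ since $\mathrm{ord}_{\ell}(u)=\alpha$), so $u^{\alpha/2}\equiv -1\pmod{\ell}$. Hence for $\lambda$ of order $\ell$ one has $\lambda^{u^{\alpha/2}}=\lambda^{-1}$, so the pair $\{\lambda,\lambda^{-1}\}$, and with it the principal-series character $\chi_{\lambda}$, is fixed by the field automorphism $\sigma^{\alpha/2}$. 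Whenever $\sigma^{\alpha/2}\in G$ (e.g.\ $G=\mathrm{Aut}(S)$ with $q=16,25,81,121,\dots$), the inertia group of your $\theta$ strictly contains $D$ and the induced character does not have degree $(q+1)|G:D|$. This is precisely the inversion issue you flag as ``delicate,'' but your choice does not avoid it: it fails for \emph{every} even $\alpha$ for which a primitive prime divisor exists. (Concretely, for $q=25$ the primitive prime divisor is $\ell=3$, and the degree-$26$ character indexed by the order-$3$ torus character is $\sigma$-invariant; only the faithful, order-$12$, torus characters work.) Moreover, your claim that $\alpha\geqslant 2$ and $q\neq 9$ rule out the Zsigmondy exceptions is false: $u^{\alpha}-1$ has no primitive prime divisor when $q=64$, nor when $q=u^{2}$ with $u$ a Mersenne prime ($q=49, 961,\dots$), and $u^{2\alpha}-1$ has none when $q=8$; all of these satisfy the hypotheses. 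Both defects vanish if you discard Zsigmondy and simply take $\lambda$ \emph{faithful} on the relevant torus, of order $(q\mp 1)/\gcd(2,q-1)$: then $\lambda^{u^{j}}\in\{\lambda,\lambda^{-1}\}$ would force $u^{j}\equiv\pm 1$ modulo the full torus order, impossible for $1\leqslant j<\alpha$ because $u^{j}+1$ is smaller than that order---a size estimate valid for all $q\geqslant 5$ in the statement except exactly $q=9$, which is the excluded case.
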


\begin{lemma}[{\cite[Lemma $3.5$]{e4}}]\label{frob.psl}
Let $G$ be a finite group and $\frac{G}{R(G)}\cong \mathrm{PSL}_{2}(q)$, where  $q=u^{f}\geqslant 11$, $u$ is a prime and $f\geqslant 1$ is an integer. Also let $\theta\in\mathrm{Irr}(R(G))$, $I:=I_{G}(\theta)$ and $N:=\frac{I}{R(G)}$ be a Frobenius group whose kernel is an elementary abelian $u$-group. Then either $\theta$ is extendible to $I$ and $\mathrm{cd}(G|\theta)=\{\theta(1) |G:I|, \theta(1) b\}$, for some positive integer $b$ divisible by $\frac{q^2-1}{gcd (2, q-1)}$, or $\theta$ is not extendible to $I$ and all character degrees in $\mathrm{cd}(G|\theta)$ are divisible by $u (q+1) \theta(1)$.\\
\end{lemma}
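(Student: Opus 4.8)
The plan is to reduce the problem to the inertia group $I$ and to the structure of $N=I/R(G)$ as a subgroup of $\mathrm{PSL}_2(q)$, and then to split according to whether $\theta$ extends to $I$. Write $R:=R(G)$. By the Clifford correspondence, induction gives a degree-multiplying bijection $\mathrm{Irr}(I\mid\theta)\to\mathrm{Irr}(G\mid\theta)$, so that $\mathrm{cd}(G\mid\theta)=|G:I|\cdot\mathrm{cd}(I\mid\theta)$; hence it suffices to understand $\mathrm{cd}(I\mid\theta)$. Since $N$ is a Frobenius group whose kernel $K$ is an elementary abelian $u$-group, Dickson's list of subgroups of $\mathrm{PSL}_2(q)$ forces $N$ to be conjugate into the Borel subgroup $U\rtimes C$ (the normaliser of a Sylow $u$-subgroup), so that $K\le U$ has order $u^m$ for some $m\le f$, the complement $H$ is cyclic, and $|H|$ divides $(q-1)/\gcd(2,q-1)$. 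A direct order count then gives $|G:I|=u^{\,f-m}(q+1)(q-1)/\bigl(\gcd(2,q-1)\,|H|\bigr)$; in particular $(q+1)\mid |G:I|$, a fact I will use in the non-extendible case.

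First I treat the case in which $\theta$ extends to $\theta_0\in\mathrm{Irr}(I)$. By Gallagher's theorem $\mathrm{Irr}(I\mid\theta)=\{\theta_0\beta:\beta\in\mathrm{Irr}(N)\}$, so $\mathrm{cd}(I\mid\theta)=\theta(1)\,\mathrm{cd}(N)$. Because $N=K\rtimes H$ is Frobenius, $H$ acts freely on $\mathrm{Irr}(K)\setminus\{1\}$, so every nontrivial $\lambda\in\mathrm{Irr}(K)$ induces irreducibly to a character of $N$ of degree $|H|$, while the characters of $N$ containing $K$ in their kernel are exactly those of the cyclic (hence abelian) group $H$ and are linear. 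Thus $\mathrm{cd}(N)=\{1,|H|\}$, and $\mathrm{cd}(G\mid\theta)=\{\theta(1)|G:I|,\ \theta(1)|G:I||H|\}$. Setting $b:=|G:I||H|=u^{\,f-m}(q^2-1)/\gcd(2,q-1)$ shows that $b$ is divisible by $(q^2-1)/\gcd(2,q-1)$, as required.

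The delicate case is when $\theta$ does not extend to $I$; this is where I expect the main obstacle. Let $L$ be the preimage of $K$ in $I$, so that $L\unlhd I$, $L/R\cong K$ and $I/L\cong H$. I first claim $\theta$ does not extend to $L$. Indeed, if it did, the extensions of $\theta$ to $L$ would form a torsor under $\mathrm{Irr}(K)$ on which $H$ acts; since $\gcd(|H|,|K|)=1$ we have $H^1(H,\mathrm{Irr}(K))=0$, so there is an $I$-invariant extension $\hat\theta$, and as $I/L\cong H$ is cyclic, $\hat\theta$ would extend to $I$, forcing $\theta$ to extend to $I$, a contradiction. Hence $\theta$ is $L$-invariant but non-extendible, with $L/R$ an elementary abelian $u$-group. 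By the theory of character triples, the commutator map $(\bar x,\bar y)\mapsto\theta([x,y])$ defines a nontrivial alternating form on $K$; passing to its radical $M/R$, the character $\theta$ extends to $M$ and is fully ramified with respect to $L/M$, whose order is an even power $u^{2s}$ of $u$ with $s\ge1$. Consequently every $\psi\in\mathrm{Irr}(L\mid\theta)$ has degree $u^{s}\theta(1)$, so $u\theta(1)\mid\psi(1)$ for all $\psi\in\mathrm{Irr}(L\mid\theta)$, and therefore $u\theta(1)$ divides $\psi(1)$ for every $\psi\in\mathrm{Irr}(I\mid\theta)$, since each such $\psi$ restricts to $L$ as a sum of $I$-conjugates of members of $\mathrm{Irr}(L\mid\theta)$. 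Combining this with $(q+1)\mid|G:I|$ from the first paragraph, every $\chi\in\mathrm{Irr}(G\mid\theta)$ satisfies $\chi(1)=|G:I|\psi(1)$, which is divisible by $u(q+1)\theta(1)$, finishing the argument.

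The main difficulty lies entirely in the non-extendible case: identifying the correct normal subgroup $L$, proving that non-extendibility descends to $L$ via the coprime vanishing $H^1(H,\mathrm{Irr}(K))=0$ together with extendibility over the cyclic quotient $I/L$, and then extracting the factor $u$ from the fully ramified section governed by the alternating form on $K$. The extendible case and the order computation of $|G:I|$ are routine once $N$ has been located inside a Borel subgroup via Dickson's list, and the hypothesis $q\ge11$ serves to exclude the small groups $\mathrm{PSL}_2(q)$ whose subgroup structure or Schur multiplier is exceptional.
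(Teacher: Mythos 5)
The paper itself contains no proof of this lemma: it is quoted from \cite[Lemma 3.5]{e4}, so there is no in-paper argument to compare yours against, and I judge your proposal on its own merits. It is correct. The Clifford correspondence reduces everything to $\mathrm{cd}(I|\theta)$; since the Frobenius kernel $K$ is a $u$-group in characteristic $u$, Sylow theory together with the fact that the normalizer of a nontrivial unipotent subgroup lies in the Borel (or, as you say, Dickson's list) places $N=K\rtimes H$ inside $U\rtimes C$ with $H$ cyclic of order dividing $(q-1)/\gcd(2,q-1)$, whence $(q+1)\mid |G:I|$; the extendible case is Gallagher plus $\mathrm{cd}(N)=\{1,|H|\}$, giving exactly the two degrees $\theta(1)|G:I|$ and $\theta(1)b$ with $b=|G:I||H|=u^{f-m}(q^2-1)/\gcd(2,q-1)$; and your descent to the preimage $L$ of $K$ (non-extendibility of $\theta$ to $L$ via the vanishing of $H^1(H,\mathrm{Irr}(K))$ for coprime orders, combined with extendibility over the cyclic quotient $I/L$) is sound. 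Two remarks. First, the phrase ``the commutator map $(\bar x,\bar y)\mapsto\theta([x,y])$ defines an alternating form on $K$'' is not literally meaningful when $\theta(1)>1$; one must first replace $(L,R(G),\theta)$ by a character-triple-isomorphic triple whose bottom character is linear and faithful, and only then is the form defined. You gesture at this (``by the theory of character triples''), but it should be said explicitly. Second, that entire fully-ramified apparatus is unnecessary: once you know $\theta$ does not extend to $L$, the paper's own Lemma \ref{11.29} gives, for every $\psi\in\mathrm{Irr}(L|\theta)$, that $\psi(1)/\theta(1)$ divides $|L:R(G)|=|K|$, hence is a power of $u$; it cannot equal $1$, since then $\psi_{R(G)}=\theta$ would be an extension. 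So $u\theta(1)\mid\psi(1)$ at once, and the rest of your argument (passing up to $I$ by Clifford's theorem, then multiplying by $|G:I|$, which is divisible by $q+1$) finishes the non-extendible case exactly as you describe.
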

Now we end this section with the proof of Proposition B.
\begin{proof}[Proof of Proposition B]
We use the classification of finite non-abelian simple groups. If $S$ is a sporadic simple group or an alternating group, then as $\pi(\mathrm{Out}(S))\subseteq \{2\}$, we have nothing to prove. Hence we assume that $S$ is a simple group of lie type over a field of order $u^f$, for some prime $u$ and positive integer $f$. Using Tables $5$ and $6$  in \cite{atlas}, it is easy to see that $\pi(\mathrm{Out}(S))\setminus\pi(S)=\pi(f)\setminus\pi(S)$ and $u (u^f -1) \, |\, |S|$. We claim that $|\pi (u^{f}-1)| \geqslant |\pi (f)|$. If $f=1$, then we have nothing to prove. Thus assume that $f\neq 1$ and $|\pi(f)|=r$, where $r$ is a positive integer. Then using Zsigmondy's theorem, it is clearly seen that $|\pi(u^{f}-1)|\geqslant r$ and we obtain the claim. Hence $| \pi(\mathrm{Out}(S))\setminus\pi(S)| = |\pi(f)\setminus\pi(S)| \leqslant |\pi(u^f -1)| < |\pi(u (u^f -1))|\leqslant |\pi(S)|$. This completes the proof.
\end{proof}

\section{A Forbidden graph}\label{sec3}
In this section we introduce an $n$-regular simple graph, which is denoted by $F(n)$, where $n\geqslant 3$ is an integer. We show that this graph can not occur as the character-graph of any finite group.

Let $n\geqslant 3$ be an integer, $W_1:=\{1, \ldots , n\}$ and $W_2:=\{1', \ldots , n'\}$. We define the graph $F(n)$ to be a  simple graph with vertex set $V(F)$ and edge set $E(F)$, where $V(F)=W_1\dot{\cup} W_2$ in which $F(n)[W_1]\cong F(n)[W_2]\cong K_n$ and for each $i\in\{1, 2, \ldots , n\}=W_1$, $i$ is precisely adjacent to $i'\in W_2$.
\begin{theorem}\label{forbidengraph}
The graph $F(n)$, where $n\geqslant 3$, can not occur as the character-graph $\Delta(G)$ of any finite group $G$.
\end{theorem}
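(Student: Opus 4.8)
The plan is to assume, for contradiction, that $\Delta(G)\cong F(n)$ for some finite group $G$ and some integer $n\geqslant 3$, and to derive an absurdity from the combinatorial rigidity of $F(n)$. The first step is to reduce to the non-solvable case. Since $F(n)$ is $n$-regular on $2n$ vertices, while a complete graph on $2n$ vertices is $(2n-1)$-regular and the cocktail party graph $\mathrm{CP}(n)$ is $(2n-2)$-regular, the inequality $n<2n-2$ (valid for $n\geqslant 3$) shows that $F(n)$ is neither complete nor a cocktail party graph. Hence, by the solvable case of Tong-viet's conjecture \cite{morre.reg}, the group $G$ must be non-solvable. I would also record at the outset the features of $F(n)$ that drive everything: writing $W_1=\{p_1,\dots,p_n\}$ and $W_2=\{q_1,\dots,q_n\}$ for the two maximum cliques, these partition $\rho(G)$, and $\overline{F(n)}$ is the connected crown graph, so $\overline{\Delta(G)}$ is bipartite with parts $W_1$ and $W_2$ (consistent with Lemma \ref{akh.reg}).

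Next I would translate the shape of $F(n)$ into arithmetic constraints on $\mathrm{cd}(G)$. Because the prime support $\pi(\chi(1))$ of every $\chi\in\mathrm{Irr}(G)$ induces a complete subgraph of $\Delta(G)$, each such support is contained either in $W_1$, in $W_2$, or in a single matched pair $\{p_i,q_i\}$; in particular each matching edge $p_iq_i$ forces the existence of a character degree whose only prime divisors are $p_i$ and $q_i$. The decisive local property is that every matching edge $p_iq_i$ lies in no triangle of $F(n)$, since $p_i$ and $q_i$ have no common neighbour. Thus in $\Delta(G)$ there are $n$ pairwise vertex-disjoint edges, each carried by a character degree but belonging to no triangle; this is exactly the kind of configuration that the simple-section lemmas are designed to obstruct.

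The heart of the argument is the non-solvable analysis. Passing to the solvable radical $R(G)$, I would use a chief-factor reduction to isolate a non-abelian simple section $S$ governing the non-solvable part of $\rho(G)$. Proposition B bounds the primes contributed by outer automorphisms, so that $\rho(G)$ is controlled by $\pi(S)$ together with $\pi(R(G))$; combining this with the regularity of $\Delta(G)$, the bipartiteness of its complement, and the classification in Lemma \ref{simcutedge}, I expect to force $S\cong\mathrm{PSL}_2(q)$ for some prime power $q$. The remaining candidates $\mathrm{M}_{11}$, $\mathrm{PSL}_3(4)$ and ${}^2\mathrm{B}_2(q^2)$ from Lemma \ref{simcutedge} should be excluded directly, since the cut edge they produce cannot be matched to the cut-edge-free graph $F(n)$.

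It then remains to rule out $S\cong\mathrm{PSL}_2(q)$, and here I would feed the explicit description of $\Delta(\mathrm{PSL}_2(q))$ in Lemma \ref{deltaS} into the constraints above. The components $\{u\}$, $\pi(q-1)$, $\pi(q+1)$ (or their odd-$q$ analogues $\{2\}\cup M\cup P$) must each land inside one of the cliques $W_1$, $W_2$; but knitting these pieces together through the characters of $R(G)$ necessarily produces, by Lemmas \ref{11.29}, \ref{notri} and \ref{frob.psl}, character degrees divisible by two primes of $\pi(S)$, and I would argue that any such degree either creates a cross edge between $W_1$ and $W_2$ outside the matching or places a third vertex on a matching edge, in either case contradicting the triangle-free matching structure (or the equality $|W_1|=|W_2|=n$). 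Carrying this out through the subcases of Lemma \ref{deltaS} — the parity of $q$, whether $q\pm1$ is a power of $2$, and the exceptional groups $A_5$ and $\mathrm{PSL}_2(8)$ singled out in Lemma \ref{notri} — yields the contradiction. I expect the main obstacle to be precisely this final bookkeeping: verifying in every $\mathrm{PSL}_2(q)$ subcase that the interaction between the simple section and the solvable radical is simultaneously incompatible with the triangle-freeness of all $n$ matching edges and with the exact balance $|W_1|=|W_2|$.
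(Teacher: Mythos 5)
Your overall skeleton does follow the paper's strategy (reduction to non-solvable groups via \cite{morre.reg}, an almost simple section controlled by Proposition B, reduction to $\mathrm{PSL}_2(q)$ through Lemma \ref{simcutedge}, then Clifford-theoretic analysis), but two of your steps do not survive scrutiny. First, your proposed ``direct'' exclusion of $\mathrm{M}_{11}$, $\mathrm{PSL}_3(4)$ and $^{2}\mathrm{B}_{2}(q^2)$ is a non sequitur: $\Delta(S)$ is only a \emph{subgraph} of $\Delta(G)$ (not an induced one, and certainly not all of it), so a cut edge of $\Delta(S)$ need not be a cut edge of $\Delta(G)$, and the fact that $F(n)$ has no cut edges yields no contradiction. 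Concretely, $\Delta(\mathrm{M}_{11})$ is a triangle on $\{2,5,11\}$ with the pendant vertex $3$ attached to $5$; this embeds as a subgraph of $F(n)$ by placing the triangle inside one clique $W_1$ and taking $3$ to be either another vertex of $W_1$ or the matched partner of $5$. The paper's Lemma \ref{removes} instead shows that $\pi(S)\setminus\{x\}\cup\pi(|G:M|)$ lands in a single part $V_i$ and then kills the configuration with a counting argument (Lemma \ref{rovi}) that depends on $|\rho(G)|\geqslant 8$ (Lemma \ref{cardgraph}, itself resting on Tong-Viet's $3$-regular theorem \cite{tongreg}) and on Proposition B via Lemma \ref{claim}; none of this is replaceable by the cut-edge mismatch you invoke.

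Second, what you defer as ``final bookkeeping'' for $S\cong\mathrm{PSL}_2(q)$ is in fact the heart of the proof, and your proposal never supplies the mechanism that makes the contradiction ``necessary.'' The paper's argument (Lemmas \ref{rovi} through \ref{even}) takes a character $\theta\in\mathrm{Irr}(R(G))$ lying under a degree carried by a matching edge, forms the inertia quotient $N=I_M(\theta)/R(G)$, and runs $N$ through Dickson's list of subgroups of $\mathrm{PSL}_2(q)$ (elementary abelian, dihedral, Frobenius, $A_4$, $S_4$, $A_5$, $\mathrm{PSL}_2(u^m)$, $\mathrm{PGL}_2(u^m)$), using Schur representation groups, Clifford's theorem, Zsigmondy's theorem and Lemma \ref{frob.psl} to manufacture in each case a character degree whose prime support violates the clique structure of $F(n)$; Lemmas \ref{11.29} and \ref{notri} alone, which is all your sketch cites at this point, do not produce such degrees. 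Your intermediate reduction is also incomplete: you never argue why the relevant chief factor of $G/R(G)$ is simple rather than a power $S^k$ with $k>1$ (the paper needs \cite{lewis.characteristically} for this in Lemma \ref{almost}), nor why its centralizer equals $R(G)$, both of which are required before Proposition B and Lemma \ref{simcutedge} can be applied at all. As it stands, the proposal is a plausible plan whose two load-bearing steps --- the exclusion of the sporadic/Suzuki cases and the $\mathrm{PSL}_2(q)$ inertia analysis --- are respectively incorrect and missing.
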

We wish to prove Theorem \ref{forbidengraph}.  On the contrary suppose that  $G$ is a finite group such that $\Delta(G)\cong F(n)$, for some positive integer $n\geqslant 3$. Then we can partition $\rho(G)=V_1\dot{\cup} V_2$ where $V_1:=\{ {a_{1}}_{1}, {a_{1}}_{2}, \ldots , {a_{1}}_{n}\}$, $V_2:=\{ {a_{2}}_{1}, {a_{2}}_{2}, \ldots , {a_{2}}_{n}\}$, $\Delta(G)[V_1]\cong\Delta(G)[V_2]\cong K_n$ and for every $s\in\{1,2, \ldots , n\}$, the vertex  ${a_{1}}_{s}\in V_1$ is precisely adjacent to ${a_{2}}_{s}\in V_2$.  As $ \Delta(G)$ is an $n$-regular character-graph with $2n$ vertices, using \cite[Theorem $A$]{morre.reg}, $G$ is a non-solvable group.

\begin{lemma}\label{cardgraph}
 $|\rho(G)|\geqslant 8$.
\end{lemma}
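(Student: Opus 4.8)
The plan is to exploit that $|\rho(G)|=2n$, so the claim $|\rho(G)|\geqslant 8$ is equivalent to $n\geqslant 4$; since $n\geqslant 3$ is already assumed, the entire content is to exclude $n=3$, i.e.\ to show that the prism $F(3)$ (two triangles joined by a perfect matching) cannot arise. I would first record the structural features of $F(n)$ that will be used repeatedly: it is connected with no cut edge, it satisfies $\omega(F(n))=n$ (any clique lies inside one of the two $K_n$'s, since a vertex of the opposite part is joined to only one matched vertex), and its complement is the crown graph $K_{n,n}$ minus a perfect matching, which is bipartite. In particular $\overline{\Delta(G)}$ is bipartite, consistent with Lemma~\ref{akh.reg}. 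Thus, for $n=3$ we would be assuming $\omega(\Delta(G))=3$, and the strategy is to manufacture either a $K_4$ or a seventh vertex in $\rho(G)$, contradicting the prism.

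Next I would use non-solvability of $G$ to extract a non-abelian simple section: choosing a chief factor and passing to an appropriate quotient $G/N$ with socle a non-abelian simple group $S$, Clifford theory gives $\mathrm{cd}(G/N)\subseteq\mathrm{cd}(G)$, hence $\pi(S)\subseteq\rho(G)$ and the adjacencies of $\Delta(S)$ survive in $\Delta(G)=F(n)$. For the \emph{generic} simple groups (those other than the $\mathrm{PSL}_2(q)$ family, together with $A_5$ and $\mathrm{PSL}_2(8)$) I would argue that the combination of the intrinsic richness of $\Delta(S)$ with the adjacency-forcing of Lemma~\ref{notri}---which guarantees that a suitable $\chi(1)/\theta(1)$ is divisible by two distinct primes of $\pi(S)$, i.e.\ produces edges inside $\pi(S)$---yields a clique of size at least $4$ inside the $n$-regular graph $F(n)$, which is impossible when $n=3$ since $\omega(F(3))=3$. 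This should eliminate every simple section except the ``thin'' groups $S\cong\mathrm{PSL}_2(q)$, whose character-graphs are disconnected with small complete components.

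The hard part, and the one I expect to be the genuine obstacle, is therefore the $\mathrm{PSL}_2(q)$ case (including the degenerate $A_5$ and $\mathrm{PSL}_2(8)$), where $\Delta(S)$ is so sparse that $\pi(S)$ alone contributes too few vertices and no large clique. Here I would track precisely how the solvable radical and the outer automorphisms inflate $\rho(G)$. Using Lemma~\ref{deltaS} to describe the components $\{u\}$, $\pi(q-1)$, $\pi(q+1)$ (or $\{2\},\pi(q-1),\pi(q+1)$ in even characteristic), Lemma~\ref{index} to produce irreducible degrees $(q\pm1)\lvert G:G\cap\mathrm{PGL}_2(q)\rvert$, and Lemma~\ref{frob.psl} to control the degrees in $\mathrm{cd}(G\mid\theta)$ coming from the radical action, I would show that the prime sets $\pi(q-1)$, $\pi(q+1)$, the defining prime $u$, the outer-automorphism (field-degree) primes, and the radical primes together cannot be packed into a $6$-vertex graph compatible with $3$-regularity and bipartite complement. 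Proposition~B enters at this stage to bound the extra outer-automorphism primes, $\lvert\pi(\mathrm{Out}(S))\setminus\pi(S)\rvert<\lvert\pi(S)\rvert$, ensuring that the field-degree primes do not silently absorb the count, while Lemma~\ref{simcutedge} pins down which small $\mathrm{PSL}_2(q)$ (and the cut-edge examples $\mathrm{M}_{11}$, $\mathrm{PSL}_3(4)$, ${}^2\mathrm{B}_2(q^2)$) must be examined by hand. The delicate point is verifying, for the smallest prime powers $q$ paired with a nontrivial radical, that the Clifford-theoretic degree formulas of Lemmas~\ref{frob.psl} and~\ref{index} genuinely force at least two primes beyond $\pi(S)$, thereby pushing $\lvert\rho(G)\rvert$ past $6$ and yielding $n\geqslant 4$.
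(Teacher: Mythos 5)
Your opening reduction is correct: since $\Delta(G)\cong F(n)$ forces $|\rho(G)|=2n$ and $n\geqslant 3$ is assumed, the lemma is exactly the assertion that $n=3$, i.e.\ $\Delta(G)\cong F(3)$ (the triangular prism), is impossible. But you have missed how the paper disposes of this: $F(3)$ would be a $3$-regular character-graph on six vertices, and Tong-Viet's theorem (quoted in the introduction and cited as \cite[Theorem $A$]{tongreg}) states that a $3$-regular character-graph must be isomorphic to $K_4$. That single citation is the entire proof. Your proposal never invokes this result and instead undertakes to re-derive the $n=3$ case from Clifford theory and the classification of finite simple groups --- in effect, to reprove Tong-Viet's theorem in the special case of the prism. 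That is not a simplification; it is the content of a separate paper, and your sketch does not actually supply it: the decisive $\mathrm{PSL}_2(q)$-with-radical analysis is only promised (``I would show that the prime sets \dots cannot be packed into a $6$-vertex graph''), never carried out.

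Two concrete points in the sketch fail or are unsupported. First, the claim that every ``generic'' simple section forces a clique of size $4$ is wrong as stated: by Herzog's classification there are simple groups with $|\pi(S)|=3$ outside the $\mathrm{PSL}_2(q)$ family (e.g.\ $\mathrm{PSL}_3(3)$, $\mathrm{PSU}_3(3)$, $\mathrm{PSU}_4(2)$) whose character-graphs are triangles; Lemma \ref{notri} then produces edges from a radical prime into $\pi(S)$, not a $K_4$ inside $\Delta(S)$, and the actual contradiction in such cases comes from Proposition B via the mechanism of Lemma \ref{claim}, which your sketch does not set up. Second, and more seriously, the counting machinery that the paper uses to finish the split cases --- Lemma \ref{rovi}, which underlies Lemmas \ref{k3graph} through \ref{even} --- requires at least \emph{three} primes of $\rho(R(G))\setminus\pi(S)$ inside one part $V_j$. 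For $n\geqslant 4$ this is automatic because a part containing only one prime of $\pi(S)$ still has $n-1\geqslant 3$ further vertices; for $n=3$ a part has only three vertices, so at most two such primes exist and the hypothesis can never be met. So the paper's own toolkit, which you propose to reuse, provably does not extend to $n=3$; that is precisely why the authors quarantine this case behind the citation to Tong-Viet. The gap in your proposal sits exactly at the point carrying all the content of the lemma.
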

\begin{proof}
 If $|\rho(G)|< 8$, then $|\rho(G)|=6$ and as $\Delta(G)$ is a $3$-regular graph, we have a contradiction with  \cite[Theorem $A$]{tongreg}. 
\end{proof}

\begin{lemma}\label{claim}
 Let  $S:=\frac{M}{R(G)}$ be a simple chief factor of $G$. If $\pi(S)\subseteq V_i$ for some $i\in\{1,2\}$, then $G$ does not exist.
\end{lemma}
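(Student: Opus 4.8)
The plan is to show that the hypothesis $\pi(S)\subseteq V_i$ forces a contradiction with the rigid structure of $F(n)$, in which every vertex of $V_i$ has exactly one neighbour outside $V_i$. Without loss of generality take $i=1$, so $\pi(S)\subseteq V_1$ and $S$ is a simple chief factor of $G$ with $S\cong M/R(G)$. The key geometric observation is that $\Delta(S)$, being determined by the character degrees of the simple group $S$, must embed as an induced subgraph of $\Delta(G)[V_1]\cong K_n$ (together with possibly some edges dragged into $V_2$ via inflation). Since $\Delta(G)[V_1]$ is complete, the internal structure of $\Delta(S)$ on $V_1$ is severely constrained: any two primes of $\pi(S)$ are automatically adjacent in $\Delta(G)$.

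**Exploiting the cut-edge structure.** The heart of the argument should invoke Lemma~\ref{simcutedge}. First I would argue that the single external edge structure of $F(n)$ propagates a cut-edge-like phenomenon into $\Delta(S)$: because each prime $a_{1_s}\in V_1$ meets only the one prime $a_{2_s}\in V_2$ outside $V_1$, character degrees of $G$ cannot glue the primes of $\pi(S)$ to $V_2$ in the way a genuinely connected, cut-edge-free $\Delta(S)$ would demand. Concretely, I expect to run Clifford theory over the chief factor $S$ using Lemma~\ref{notri}: for $\theta\in\mathrm{Irr}(R(G))$, either some $\chi\in\mathrm{Irr}(G|\theta)$ has $\chi(1)/\theta(1)$ divisible by two distinct primes of $\pi(S)\subseteq V_1$ — harmless inside the clique — or $S\cong A_5$ or $\mathrm{PSL}_2(8)$. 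Pushing this across all the degrees should force $\Delta(S)$ to be either disconnected or to possess a cut edge, so that Lemma~\ref{simcutedge} pins $S$ down to the short list $\mathrm{M}_{11}$, $\mathrm{PSL}_3(4)$, ${}^2\mathrm{B}_2(q^2)$, or $\mathrm{PSL}_2(q)$.

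**Eliminating the candidates.** For each surviving candidate I would derive a numerical contradiction. The cleanest lever is Proposition~B: since $\pi(S)\subseteq V_1$ but the prime divisors contributed by $\mathrm{Out}(S)$ and by the action of $G$ on $R(G)$ must land on vertices that, in $F(n)$, have their unique external neighbour in $V_2$, I can count primes. For the $\mathrm{PSL}_2(q)$ case I would use Lemma~\ref{deltaS} to read off the component structure of $\Delta(S)$ — the split into $\{u\}$, $\pi(q-1)$ and $\pi(q+1)$ — and check that this three-part (or cut-edge) pattern cannot be realized inside a clique $V_1$ whose only external connections are the perfect matching to $V_2$; Lemmas~\ref{index} and~\ref{frob.psl} then supply the extra character degrees of $G$ that would create forbidden edges spanning $V_1$ and $V_2$. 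The sporadic and Suzuki candidates fall to the same prime-counting and degree-divisibility checks.

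**Anticipated obstacle.** The main difficulty will be the bookkeeping in the $\mathrm{PSL}_2(q)$ branch, where $q$ ranges over all prime powers and the component structure of $\Delta(S)$ genuinely varies (cases 1, 2a, 2b of Lemma~\ref{deltaS}). Controlling how the degrees guaranteed by Lemmas~\ref{index} and~\ref{frob.psl} distribute their prime factors across $V_1$ and $V_2$ — and ruling out the exceptional small cases $A_5$, $\mathrm{PSL}_2(8)$ that escape Lemma~\ref{notri} — is where the real work lies, since those degrees can in principle create an edge from a prime of $\pi(S)\subseteq V_1$ to a second prime in $V_2$, violating the matching condition that each $a_{1_s}$ has a unique neighbour $a_{2_s}$.
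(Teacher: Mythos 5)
Your proposal has a genuine gap at its central step. You claim that running Lemma~\ref{notri} over the chief factor ``should force $\Delta(S)$ to be either disconnected or to possess a cut edge'', so that Lemma~\ref{simcutedge} reduces $S$ to a short list. No argument is given for this, and none exists: under the hypothesis $\pi(S)\subseteq V_1$, the graph $\Delta(S)$ sits inside the clique $\Delta(G)[V_1]\cong K_n$, which imposes no restriction whatsoever on its internal structure --- a connected, cut-edge-free (even complete) $\Delta(S)$ is entirely compatible with the hypothesis. Indeed you yourself observe that the first branch of Lemma~\ref{notri} is ``harmless inside the clique'', which is exactly why no contradiction about the shape of $\Delta(S)$ can emerge. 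This also inverts the paper's architecture: Lemma~\ref{simcutedge} is used later (Lemma~\ref{removes} and the lemmas following it) to reduce the main theorem \emph{to} the present lemma, not to prove it; your ``eliminating the candidates'' section, with its $\mathrm{PSL}_2(q)$ bookkeeping via Lemmas~\ref{deltaS}, \ref{index} and \ref{frob.psl}, therefore replicates the wrong part of the paper and attacks a case distinction that is irrelevant here.

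The actual difficulty when $\pi(S)\subseteq V_i$ is not the shape of $\Delta(S)$ but accounting for the $n$ vertices of $V_j$, and the first branch of Lemma~\ref{notri} is the engine of the proof, not a harmless case. The paper sets $\frac{C}{R(G)}:=C_{\frac{G}{R(G)}}\big(\frac{M}{R(G)}\big)$, so that $S\cong \frac{MC}{C}\lhd \frac{G}{C}\leq \mathrm{Aut}(S)$. For any prime $x\in\rho(C)\setminus\pi(S)$, Lemma~\ref{notri} applied to $\frac{MC}{C}$ together with Gallagher's theorem makes $x$ adjacent to at least two distinct primes of $\pi(S)\subseteq V_i$; since in $F(n)$ every vertex of $V_j$ has exactly one neighbour in $V_i$, this forces $x\in V_i$. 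Hence $\rho(C)\cup\pi(S)\subseteq V_i$, and then (using Lemma~\ref{11.29} applied to $C\unlhd G$) the vertices of $V_j$ can only be primes of $\pi\big(\frac{G}{C}\big)\setminus(\rho(C)\cup\pi(S))\subseteq \pi(\mathrm{Out}(S))\setminus\pi(S)$. Proposition~B now gives $n=|V_j|\leqslant |\pi(\mathrm{Out}(S))\setminus\pi(S)|<|\pi(S)|\leqslant |V_i|=n$, a contradiction --- with no case analysis on $S$ at all. You do name Proposition~B as ``the cleanest lever'', but without introducing the centralizer $C$, establishing $\rho(C)\subseteq V_i$, and deriving the containment $V_j\subseteq\pi(\mathrm{Out}(S))\setminus\pi(S)$, the prime count you envisage never gets off the ground.
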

\begin{proof}
Set $\frac{C}{R(G)}:=C_{\frac{G}{R(G)}} \Big(\frac{M}{R(G)} \Big)$. Then it is easy to see  that $S\cong \frac{MC}{C} \lhd \frac{G}{C} \leq \mathrm{Aut}(S)$. Now let $x\in \rho(C)\setminus\pi(S)$. Thus applying Lemma \ref{notri} for  $\frac{MC}{C}$ and using Gallagher's theorem,
  $x$ is adjacent to at least two distinct primes in $\pi (S)$. Therefore by the structure of $F(n)$, it is clear that 
   $\rho (C)\cup \pi(S)\subseteq V_{i}$ and  $V_j \subseteq \pi \Big( \frac{G}{C}\Big) \setminus (\rho (C) \cup \pi (S) )\subseteq \pi(\mathrm{Out}(S))\setminus\pi(S)$, where $\{i,j\}=\{1,2\}$. Hence using Proposition $B$,  $n=|V_{j}| \leqslant |\pi (\mathrm{Out}(S))\setminus \pi (S) | < |\pi (S)| \leqslant |V_i|=  n$. This is a contradiction and thus $G$ does not exist.
\end{proof}

\begin{lemma}\label{almost}
There exists a normal subgroup $R(G)\lneq M \unlhd G$, such that $\frac{G}{R(G)}$ is an almost simple group with socle  $S:=\frac{M}{R(G)}$.
\end{lemma}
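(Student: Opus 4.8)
Show that $G/R(G)$ is almost simple with socle a non-abelian simple group.

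Let me think about what we need to prove and what tools we have.\textbf{The plan.} The goal is to produce a single non-abelian simple chief factor $S=M/R(G)$ whose automorphic action governs all of $G/R(G)$, i.e.\ $C_{G/R(G)}(S)=1$, so that $G/R(G)$ embeds in $\mathrm{Aut}(S)$ as an almost simple group. Since $G$ is non-solvable (established right after the statement of Theorem~\ref{forbidengraph}), the quotient $G/R(G)$ has a minimal normal subgroup that is a direct product of copies of a non-abelian simple group. The first step is to produce a \emph{simple} chief factor $S=M/R(G)$ of $G$, rather than a product of several copies, and here Lemma~\ref{claim} is the key lever: it forbids any simple chief factor $S$ with $\pi(S)$ contained in one of the two sides $V_1$ or $V_2$.

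\textbf{First step: ruling out products of copies.} Suppose the minimal normal subgroup $\frac{L}{R(G)}$ of $\frac{G}{R(G)}$ is a direct product $S_1\times\cdots\times S_t$ of $t\geq 2$ isomorphic copies of a non-abelian simple group $T$. I would use the fact that such a product forces rich divisibility in the character degrees: by Clifford theory over $R(G)$ together with Lemma~\ref{notri} applied componentwise, the primes in $\pi(T)$ are mutually adjacent and moreover each prime in $\pi(T)$ acquires high degree, which conflicts with the very sparse edge-structure of $F(n)$ (each cross-edge ${a_1}_s{a_2}_s$ is the \emph{only} edge between $V_1$ and $V_2$ through that pair, so the graph has exactly $n$ edges across the cut and is otherwise two disjoint cliques). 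Concretely, a product of $t\ge 2$ copies would make some prime adjacent to too many vertices, or would place $\pi(T)$ entirely inside one clique $V_i$, at which point Lemma~\ref{claim} applies directly to give a contradiction. The cleanest route is to show $t=1$, so that $\frac{L}{R(G)}=S$ is itself simple, giving the required $M$ with $S=\frac{M}{R(G)}$.

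\textbf{Second step: $S$ straddles the two cliques.} Once $S$ is a simple chief factor, Lemma~\ref{claim} tells us immediately that $\pi(S)\not\subseteq V_i$ for either $i$, so $\pi(S)$ meets both $V_1$ and $V_2$. Then, exactly as in the proof of Lemma~\ref{claim}, set $\frac{C}{R(G)}:=C_{\frac{G}{R(G)}}\!\big(\frac{M}{R(G)}\big)$, so that $S\cong\frac{MC}{C}\lhd\frac{G}{C}\le\mathrm{Aut}(S)$, and every prime in $\rho(C)\setminus\pi(S)$ is adjacent, via Lemma~\ref{notri} and Gallagher, to at least two primes of $\pi(S)$. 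Because $\pi(S)$ now has vertices on both sides, any such prime would be adjacent to two vertices lying in different cliques or straddling a cross-edge, which the structure of $F(n)$ does not permit beyond the single matched partner. This should force $\rho(C)\subseteq\pi(S)$ and ultimately $C=R(G)$, i.e.\ $C_{\frac{G}{R(G)}}(S)=1$; hence $\frac{G}{R(G)}$ is almost simple with socle $S$, which is the assertion of Lemma~\ref{almost}.

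\textbf{Anticipated obstacle.} The delicate point is the first step, bounding the number $t$ of simple direct factors, since the general Clifford-theoretic analysis of $\mathrm{Irr}(G|\theta)$ over a product of simple groups can branch into several cases depending on extendibility. I expect the argument will lean hard on the extreme sparseness of $F(n)$ across the cut: the graph has a perfect matching but \emph{no} other cross-edges, so any mechanism that produces a prime adjacent to three or more vertices, or that fills in a second cross-edge, is immediately fatal. Managing the extendibility alternative in Lemma~\ref{notri} (the exceptional cases $S\cong A_5$ or $\mathrm{PSL}_2(8)$) for the product situation will require the small-group bookkeeping, but those have few prime divisors and can be dispatched against Lemma~\ref{cardgraph}, which guarantees $|\rho(G)|\ge 8$.
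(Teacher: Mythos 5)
Your setup (a chief factor $M/R(G)\cong S^k$ of the non-solvable group $G$, the centralizer $\frac{C}{R(G)}:=C_{G/R(G)}\bigl(\frac{M}{R(G)}\bigr)$, and the embedding $S^k\cong \frac{MC}{C}\lhd \frac{G}{C}\le\mathrm{Aut}(S^k)$) matches the paper, but both of your main steps have genuine gaps. For the reduction to $k=1$: your proposed mechanism is that $\pi(T)$ lands inside one clique $V_i$, ``at which point Lemma \ref{claim} applies directly.'' It does not: Lemma \ref{claim} is stated for a \emph{simple chief factor} of $G$, and when $k\ge 2$ the chief factor is $T^k$, not $T$, so its hypothesis simply fails. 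Nor can one rerun the proof of Lemma \ref{claim} with $T^k$ in place of $S$: that proof ends by comparing $n=|V_j|\le|\pi(\mathrm{Out}(S))\setminus\pi(S)|$ against $|\pi(S)|\le n$ via Proposition B, and for $T^k$ the relevant group is $\mathrm{Aut}(T^k)=\mathrm{Aut}(T)\wr S_k$, whose order picks up every prime dividing $k!$; these extra primes are not controlled by $|\pi(T)|$, so the count collapses. The paper closes exactly this hole with the Main Theorem of \cite{lewis.characteristically}: for $k\ge 2$, $\Delta\bigl(\frac{G}{C}\bigr)$ is a complete graph on at least three vertices, hence $\pi\bigl(\frac{G}{C}\bigr)\subseteq V_i$; then any $r\in V_j$ lies in $\rho(C)$ by Lemma \ref{11.29}, and Lemma \ref{notri} plus Gallagher produce a triangle $\{r,p,q\}$ or $\{r,2,3\}$ with one vertex in $V_j$ and two in $V_i$, which $F(n)$ forbids. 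Your sketch contains no substitute for this external theorem, and you yourself flag this as the step you cannot complete.

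Your second step is not merely incomplete but incorrect as argued. Once $\pi(S)$ meets both $V_1$ and $V_2$, a prime $x\in\rho(C)\setminus\pi(S)$ adjacent to two primes of $\pi(S)$ creates no conflict with $F(n)$: Lemma \ref{notri} produces $p,q\in\pi(S)$ dividing a common degree, so $\{x,p,q\}$ is a \emph{triangle}, and triangles sit happily inside either clique --- nothing forces $p$ and $q$ onto opposite sides of the cut. Hence ``$\rho(C)\subseteq\pi(S)$'' is not forced; and even if it were, it would not imply $C=R(G)$, since $C$ could be non-solvable with all its character degrees supported on primes of $\pi(S)$. The paper's actual argument is different: assuming $C\ne R(G)$, it takes a second chief factor $\frac{L}{R(G)}\cong T$ inside $\frac{C}{R(G)}$ (non-abelian because $R(G)$ is the solvable radical, and simple by the $k=1$ step), notes that $\Delta(S\times T)$ is a subgraph of $\Delta(G)$, and invokes \cite[Theorem $3.1$]{directcharactergraph}: since $2\in\pi(S)\cap\pi(T)$, every triple $\{2,s,t\}$ with $s\in\pi(S)\setminus\{2\}$ and $t\in\pi(T)\setminus\{2\}$ is a triangle, pinning $\pi(S)\cup\pi(T)$ into the clique containing $2$; then $\pi(S)\subseteq V_i$ contradicts Lemma \ref{claim}. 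So both halves of your proposal require ideas (the Lewis--McVey theorem, and the direct-product character-graph theorem) that are absent from it.
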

\begin{proof}
Let $\frac{M}{R(G)}$ be a chief factor of $G$. Then there exists a non-abelian simple group $S$ and some positive integer $k$ such that $\frac{M}{R(G)}\cong S^k$. Set $\frac{C}{R(G)}:=C_{\frac{G}{R(G)}} \Big( \frac{M}{R(G)}\Big)$. It is easy to see that $S^{k}\cong \frac{MC}{C} \vartriangleleft \frac{G}{C} \leq \mathrm{Aut(S^{k})}$.

We claim that $k=1$. On the contrary suppose that $k > 1$. Hence by \cite[Main Theorem]{lewis.characteristically},  $\Delta\Big(\frac{G}{C}\Big)$ is a complete graph with at least three vertices. Then using 
 the structure of $F(n)$,  we can assume that $\pi\Big( \frac{G}{C} \Big) \subseteq V_{i}$ and so $ V_{j}\subseteq \rho(G) \setminus\pi\Big( \frac{G}{C} \Big)$, where $\{i,j\}=\{1,2\}$.  Let $r\in V_{j}$. Hence there exists $\theta\in\mathrm{Irr}(C)$ such that $r | \theta(1)$. Also there exists $L\unlhd MC$ such that $\frac{L}{C}\cong S$. Thus using Lemma \ref{notri}, either for some $\chi\in\mathrm{Irr}(L|\theta)$, $\frac{\chi(1)}{\theta(1)}$ is divisible by two distinct primes  $p, q \in \pi (S)$  or $\theta$ is extendible to $\theta_{0}\in\mathrm{Irr}(L)$ and $S \cong A_{5}$ or $\mathrm{PSL}_{2}(8)$. If the later case holds then by Gallagher's theorem $r$ is adjacent to  all vertices in $\pi(S)$. Therefore, the induced subgraph of $\Delta(G)$ on either  $\{r, p, q\}$ or  $\{r, 2,3\}$ is a triangle. It is a contradiction with the structure of $F(n)$. Hence $k=1$ and $\frac{M}{R(G)}\cong S$ is a simple group.

Now we claim that $C=R(G)$. On the contrary suppose that $C\neq R(G)$. Choose $R(G)\lneq L\leq C$ such that $\frac{L}{R(G)}$ is a chief factor of $G$. Hence similar to the above paragraph, $\frac{L}{R(G)}\cong T$, for some non-abelian simple group $T$. Thus $\Delta\big(\frac{ML}{R(G)}\big)\cong \Delta(S\times T)\subseteq\Delta(G)$. Since $2\in\pi(S)\cap\pi(T)$, using \cite[Theorem $3.1$]{directcharactergraph}, for distinct primes  $s\in \pi(S)\setminus \{2\}$  and  $t\in\pi(T)\setminus \{2\}$,   $\Delta(S\times T)[\{2,s,t\}]\cong K_3$. Hence by the structure of $F(n)$, it is clear that  $\pi(S)\subseteq \rho(S\times T) \subseteq V_i$, for some $i\in \{1,2\}$. Thus using Lemma \ref{claim}, we have a contradiction. Therefore $C=R(G)$ and this completes the proof.
\end{proof}
In the sequal of this section, using Lemma \ref{almost}, we have $\frac{G}{R(G)}$ is an almost simple group with socle $S:=\frac{M}{R(G)}$, where $R(G)\lneq M \unlhd G$ is a normal subgroup of $G$. 

\begin{lemma}\label{rovi}
Suppose that for some $p\in \pi(S)$,  $\pi(S)\setminus \{p\}\subseteq V_{i}$ and  $| (\rho(R(G))\setminus\pi(S)) \cap V_j|\geqslant 3$, where  $\{i,j\}=\{1,2\}$. Then $G$ does not exist.
\end{lemma}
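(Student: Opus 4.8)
The plan is to exploit the extreme rigidity of $F(n)$. Since $V_1$ and $V_2$ each induce a $K_n$ and are joined only by a perfect matching, a vertex lying in $V_j$ has a \emph{unique} neighbour in $V_i$, and consequently every triangle of $\Delta(G)\cong F(n)$ lies entirely inside $V_1$ or entirely inside $V_2$. I would record these two elementary observations first, as they are immediate from the definition of $F(n)$. The hypothesis $\pi(S)\setminus\{p\}\subseteq V_i$ then translates into the convenient statement that $|\pi(S)\cap V_j|\leqslant 1$ (any prime of $\pi(S)$ not equal to $p$ lies in $V_i$).

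Next I would fix a prime $r\in(\rho(R(G))\setminus\pi(S))\cap V_j$, which exists since this set has at least three elements, and choose $\theta\in\mathrm{Irr}(R(G))$ with $r\mid\theta(1)$. By Lemma \ref{almost}, $M/R(G)\cong S$ is a non-abelian simple group, so I can apply Lemma \ref{notri} to the pair $R(G)\unlhd M$ and the character $\theta$, which gives two alternatives. In the generic alternative there is $\chi\in\mathrm{Irr}(M\,|\,\theta)$ whose degree quotient $\chi(1)/\theta(1)$ is divisible by two distinct primes $s_1,s_2\in\pi(S)$; lifting $\chi$ to some $\psi\in\mathrm{Irr}(G\,|\,\chi)$, so that $\chi(1)\mid\psi(1)$ by Clifford's theorem, shows that $r,s_1,s_2$ are pairwise adjacent in $\Delta(G)$ (note $r\notin\pi(S)$ forces $r\neq s_1,s_2$). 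Thus $\{r,s_1,s_2\}$ is a triangle containing the $V_j$-vertex $r$, so by the triangle observation $s_1,s_2\in V_j$; but then $s_1,s_2\in\pi(S)\cap V_j$, contradicting $|\pi(S)\cap V_j|\leqslant 1$.

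The delicate part is the exceptional alternative of Lemma \ref{notri}, where $\theta$ extends to $M$ and $S\cong A_{5}$ or $\mathrm{PSL}_{2}(8)$. Here Gallagher's theorem gives $\mathrm{cd}(M\,|\,\theta)=\{\theta(1)\beta(1):\beta\in\mathrm{Irr}(S)\}$, and the obstacle is that the primes of $\pi(S)$ need \emph{not} be pairwise adjacent --- indeed $\Delta(A_{5})$ and $\Delta(\mathrm{PSL}_{2}(8))$ are edgeless --- so the clean triangle argument is unavailable. Instead I would use that $\mathrm{cd}(A_{5})=\{1,3,4,5\}$, respectively $\mathrm{cd}(\mathrm{PSL}_{2}(8))=\{1,7,8,9\}$, already exhibits each of the three primes of $\pi(S)$ as a divisor of some degree, so after multiplying by $\theta(1)$ (and lifting to $G$ as above) the vertex $r$ becomes adjacent in $\Delta(G)$ to every prime of $\pi(S)$. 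Since $|\pi(S)|=3$ and $|\pi(S)\cap V_j|\leqslant 1$, at least two primes of $\pi(S)$ lie in $V_i$, whence $r\in V_j$ is adjacent to two distinct vertices of $V_i$, contradicting the uniqueness of the $V_i$-neighbour of $r$.

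In both alternatives we reach a contradiction, so $G$ cannot exist. The main obstacle, as indicated, is exactly the exceptional case: one must abandon the triangle argument and argue directly from the ``unique cross-matching neighbour'' property together with the explicit degree sets of $A_{5}$ and $\mathrm{PSL}_{2}(8)$. A secondary point worth checking is the passage from a degree of $M$ to a degree of $G$, which rests on the divisibility $\chi(1)\mid\psi(1)$ for $\psi\in\mathrm{Irr}(G\,|\,\chi)$; this follows from Clifford's theorem (or from Lemma \ref{11.29}). I also note that the hypothesis on the size of $(\rho(R(G))\setminus\pi(S))\cap V_j$ is used only to guarantee that at least one such prime $r$ exists, so the argument in fact needs far less than the three primes assumed.
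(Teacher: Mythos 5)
Your proof is correct, but it takes a genuinely different route from the paper's. The paper first disposes of the case $p\in V_i$ by citing Lemma \ref{claim}, and for $p\in V_j$ it uses the full strength of the hypothesis $|(\rho(R(G))\setminus\pi(S))\cap V_j|\geqslant 3$: since $R(G)$ is solvable, the complement of $\Delta(R(G))$ is bipartite by \cite[Theorem A]{a1}, so among the three given primes two of them, say $a,b\in V_j$, divide a common degree $\theta(1)$ for some $\theta\in\mathrm{Irr}(R(G))$. A single application of Lemma \ref{notri} together with Gallagher's theorem then yields $c\in\pi(S)\setminus\{p\}\subseteq V_i$ adjacent to both $a$ and $b$, and both alternatives of Lemma \ref{notri} collapse to the same contradiction: the vertex $c\in V_i$ would have two neighbours in $V_j$, violating the matching structure of $F(n)$. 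You instead work with a single prime $r\in(\rho(R(G))\setminus\pi(S))\cap V_j$ and extract the contradiction separately in the two alternatives: by triangle-confinement in the generic case, and by the explicit degree sets of $A_5$ and $\mathrm{PSL}_2(8)$ combined with the unique-cross-neighbour property in the exceptional case (correctly noting that the triangle argument is unavailable there, which is precisely the obstruction the paper's two-prime device is designed to circumvent). What the paper's approach buys is uniformity of the contradiction, at the price of the $3$-prime hypothesis, the solvability input from \cite[Theorem A]{a1}, and a case split on $p$; what your approach buys is a strictly stronger lemma --- as you observe, nonemptiness of $(\rho(R(G))\setminus\pi(S))\cap V_j$ suffices --- with no case distinction and no appeal to the bipartiteness of $\overline{\Delta(R(G))}$. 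Both arguments rest on the same core tools (Lemma \ref{notri}, Gallagher's theorem, the divisibility $\phi(1)\mid\chi(1)$ from Clifford, and the rigidity of $F(n)$), so yours is a clean strengthening rather than a new mechanism.
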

\begin{proof}
If $p\in V_i$, then by Lemma \ref{claim} we have nothing to prove. Hence we can assume that $p\in V_j$. As $| (\rho(R(G))\setminus\pi(S)) \cap V_j|\geqslant 3$, using \cite[Theorem $A$]{a1}, there exist $\theta\in\mathrm{Irr}(R(G))$  and $a,b\in (\rho(R(G))\setminus\pi(S)) \cap V_j$ such that $ab | \theta (1)$. Thus using  Lemma \ref{notri} for $\frac{M}{R(G)}$ and Gallagher's theorem, there exists $c\in\pi(S)\setminus\{p\}$, such that $\Delta(G)[\{a,b,c\}]$ is a triangle. It is a contradiction with 
the structure of $F(n)$ and this completes the proof.
\end{proof}
 
\begin{lemma}\label{removes}
Suppose that either $\Delta(S)$ is connected and has no cut edges, or $S$ is isomorphic to one of the groups $\mathrm{M}_{11}$, $\mathrm{PSL}_{3}(4)$ or $^{2}\mathrm{B}_{2}(q^2)$, where $q^2=2^{2m+1}$ for some integer $m\geqslant 1$ and  $r:=q^2-1$ is a Mersenne prime. Then $G$ does not exist.
\end{lemma}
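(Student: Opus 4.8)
The plan is to exploit the embedding of $\Delta(S)$ into $\Delta(G)=F(n)$ together with the rigid local structure of $F(n)$, reducing every case to Lemma~\ref{claim} or Lemma~\ref{rovi}. First I would record the structural facts. By Lemma~\ref{almost}, $\overline{G}:=G/R(G)$ is almost simple with socle $S$; for each $\psi\in\mathrm{Irr}(S)$ Clifford's theorem produces $\chi\in\mathrm{Irr}(\overline{G}\mid\psi)$ with $\psi(1)\mid\chi(1)$, and inflating $\chi$ to $G$ shows that every edge of $\Delta(S)$ is an edge of $\Delta(G)=F(n)$; thus $\Delta(S)$ is a subgraph of $F(n)$ on the vertex set $\rho(S)=\pi(S)$. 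On the $F(n)$ side the crucial points are that the $V_1$--$V_2$ edges form a perfect matching, that every triangle of $F(n)$ lies inside $V_1$ or inside $V_2$, and that \emph{each matching edge lies in no triangle} (a vertex of $V_j$ has a single neighbour in $V_i$, namely its matching partner). Finally, since $\Delta(\mathrm{PSL}_2(q))$ is disconnected by Lemma~\ref{deltaS} and the groups of case~(b) are listed explicitly, here $S\not\cong\mathrm{PSL}_2(q)$, so Lemma~\ref{notri} always yields two distinct primes of $\pi(S)$ and its exceptional $A_5,\mathrm{PSL}_2(8)$ alternative never occurs.

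For case~(a), where $\Delta(S)$ is connected with no cut edge, I would use White's determination of $\Delta(S)$ \cite{whitesim} to see that $\Delta(S)$ is complete, equivalently that every edge of $\Delta(S)$ lies in a triangle. Were $\Delta(S)$ to contain an edge crossing between $V_1$ and $V_2$, that edge would be a matching edge of $F(n)$ and hence lie in no triangle of $F(n)\supseteq\Delta(S)$ --- impossible. So $\Delta(S)$ has no crossing edge; being connected, it is confined to a single part, giving $\pi(S)=\rho(S)\subseteq V_i$, and Lemma~\ref{claim} yields the contradiction. As a consistency check, a crossing edge could not be a bridge of the bridgeless $\Delta(S)$, and a prime that is the unique $\pi(S)$-vertex of $V_j$ would have degree at most $1$ in $\Delta(S)$; hence $\pi(S)$ meets each part in $0$ or at least $2$ primes.

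For case~(b) I would read $\Delta(S)$ off the known degrees: in each of $\mathrm{M}_{11}$, $\mathrm{PSL}_3(4)$ and ${}^2\mathrm{B}_2(q^2)$ the graph is a clique $C:=\pi(S)\setminus\{p\}$ together with a single pendant prime $p$ (with $p=3$ for $\mathrm{M}_{11}$ and $p=2$ otherwise) joined to $C$ by the unique triangle-free cut edge; for ${}^2\mathrm{B}_2(q^2)$ the Mersenne hypothesis on $r=q^2-1$ is exactly what makes $\pi(q^2-1)=\{r\}$ a single vertex, so that $C=\{r\}\cup\pi(q^4+1)$ is complete and $2$ is pendant. The clique $C$ lies in one part $V_i$. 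If $p\in V_i$ then $\pi(S)\subseteq V_i$ and Lemma~\ref{claim} finishes; so suppose $p\in V_j$. Then the pendant edge is a matching edge, whence $\pi(S)\cap V_j=\{p\}$ and $\pi(S)\setminus\{p\}=C\subseteq V_i$, which is the first hypothesis of Lemma~\ref{rovi}. To secure its second hypothesis I must place three primes of $\rho(R(G))\setminus\pi(S)$ in $V_j$: since $|V_j|=n\geq 4$ by Lemma~\ref{cardgraph} and $\pi(S)\cap V_j=\{p\}$, the remaining $n-1\geq 3$ primes of $V_j$ avoid $\pi(S)$, and it suffices to exclude outer-automorphism primes from $V_j$. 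For $\mathrm{M}_{11}$ and $\mathrm{PSL}_3(4)$ this is immediate since $\pi(\mathrm{Out}(S))\subseteq\pi(S)$. For ${}^2\mathrm{B}_2(q^2)$ a field-automorphism prime $y\in\rho(G)$ (one dividing $2m+1$, where $q^2=2^{2m+1}$) would, upon inducing along the field automorphism a moved character of $S$ whose degree is divisible by the matching pair $pr$, produce a degree divisible by $p\,r\,y$ --- a triangle on the matching edge $pr$, contradicting that matching edges lie in no triangle. Hence $V_j\setminus\{p\}\subseteq\rho(R(G))\setminus\pi(S)$ and Lemma~\ref{rovi} applies.

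I expect two places to demand care. First, in case~(a), one must extract from White's classification \cite{whitesim} that $\Delta(S)$ carries no triangle-free edge, so that it cannot contribute a matching edge of $F(n)$; this is the step on which the clean reduction to Lemma~\ref{claim} rests. Second, and more seriously, in case~(b) with $p\in V_j$ the outer-automorphism bookkeeping for the Suzuki family is delicate, since the field-automorphism primes need not lie in $\pi(S)$ and a naive count against Proposition~B is too weak. The argument above isolates the decisive mechanism --- such a prime forces a forbidden triangle on the matching edge $pr$ --- but carrying it out rigorously requires the explicit action of the field automorphism on $\mathrm{Irr}({}^2\mathrm{B}_2(q^2))$, and the smallest configurations (for instance $n=4$ with $S={}^2\mathrm{B}_2(8)$, where the only candidate field prime is $3$) are the tightest and may warrant direct inspection.
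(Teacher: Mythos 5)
Your overall reduction is the paper's: embed $\Delta(S)$ into $F(n)$, use the facts that every triangle of $F(n)$ lies inside one part while the crossing (matching) edges lie in no triangle, confine the big clique of $\Delta(S)$ to one part, and finish with Lemmas~\ref{claim} and~\ref{rovi} (via Lemma~\ref{cardgraph}). For $\mathrm{M}_{11}$ and $\mathrm{PSL}_3(4)$ your argument is complete and agrees with the paper. The genuine gap is exactly where you yourself flag trouble: the Suzuki sub-case. Your mechanism --- a field-automorphism prime $y\mid 2m+1$ would move a character of $S={}^2\mathrm{B}_2(q^2)$ whose degree is divisible by the matching pair $pr=2r$, and induction would give a degree divisible by $2ry$, a forbidden triangle on the matching edge --- rests on a false premise. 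The only irreducible characters of ${}^2\mathrm{B}_2(q^2)$ of even degree are the Steinberg character of degree $q^4$ and the two characters of degree $2^m(q^2-1)$; hence the characters with degree divisible by $2r$ are precisely that pair. Since the field automorphisms form a cyclic group of odd order $2m+1$, their orbits on this two-element set are singletons, so both characters are invariant, extend to every group between $S$ and $\mathrm{Aut}(S)$, and by Gallagher's theorem every degree over them is exactly $2^m(q^2-1)$. No degree divisible by $2ry$ ever appears. Concretely, $\mathrm{cd}(\mathrm{Sz}(8).3)=\{1,14,64,91,105,195\}$: the primes $2$ and $3$ are not even adjacent there, so the triangle $\{2,7,3\}$ you want to force does not exist. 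The mechanism therefore fails in precisely the tightest configuration you single out, and nothing in your sketch replaces it.

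The paper closes this sub-case the other way around: by \cite[Theorem $1.1$]{ghafar}, any prime $y$ dividing $2m+1$ that lies in $\rho(G)$ is adjacent in $\Delta(G)$ to \emph{all} primes of $A=\pi(S)\setminus\{2\}$. Since $A$ is a clique with $|A|\geqslant 3$, it lies inside one part $V_i$, and a vertex of $V_j$ has exactly one neighbour in $V_i$; having at least two neighbours in $V_i$, the prime $y$ is forced into $V_i$. That is exactly the statement you need before applying Lemma~\ref{rovi}, and it is obtained from adjacency of $y$ to the clique $A$, not from a triangle through the pendant vertex $2$ (to which $y$ is typically \emph{not} adjacent). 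Separately, a smaller inaccuracy in your case (a): White's classification does not make $\Delta(S)$ complete there --- $J_1$, $M_{23}$, $A_8$, and ${}^2\mathrm{B}_2(q^2)$ with $q^2-1$ composite all have connected, bridgeless, non-complete graphs --- so "complete" and "every edge lies in a triangle" are not equivalent. Fortunately the weaker property, which is all your argument uses, does hold for every group in case (a), so that step survives once restated; it is in effect the paper's tacit "it is clear" step.
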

\begin{proof}
If $\Delta(S)$ is connected and has no cut edges, then by \cite{whitesim} and Lemma \ref{simcutedge}, it is clear that $\pi(S)\subseteq V_i$, for some $i\in\{1,2\}$. It is a contradiction with Lemma \ref{claim}. Hence we can assume that $S$ is isomorphic to one of groups $\mathrm{M}_{11}$, $\mathrm{PSL}_{3}(4)$ or $^{2}\mathrm{B}_{2}(q^2)$, where $q^2=2^{2m+1}$ for some integer $m\geqslant 1$ and  $r:=q^2-1$ is a Mersenne prime. Looking at the structure of $\Delta(S)$ \cite{whitesim}, there exists $x\in \pi(S)$ such that $\Delta(G)[\pi(S)\setminus\{x\}]$ is a complete graph with at least three vertices. Set $A:=\pi(S)\setminus\{x\}$. By \cite{atlas},  $|\pi(\mathrm{Out}(S))\setminus\pi(S)|\leqslant 1$. Note that when $|\pi(\mathrm{Out}(S))\setminus\pi(S)| = 1$, then    $S\cong$  $^{2}B_{2}(q^2)$ and using \cite[Theorem $1.1$]{ghafar}, $2m+1$ is adjacent in $\Delta(G)$ to all primes in $A$. Hence the structure of $F(n)$  implies that $A\cup\pi(|G:M|)\subseteq V_i$, for some $i\in\{1,2\}$.  Thus using Lemmas \ref{cardgraph} and \ref{rovi}, we are done.
\end{proof}
Applying Lemmas  \ref{simcutedge} and \ref{removes}, the remaining case is $S\cong \mathrm{PSL}_{2}(q)$, where $q=u^{f}$, for some prime $u$ and positive integer $f$. Note that we frequently use Lemmas \ref{deltaS}, \ref{cardgraph} and the structure of $F(n)$ without any further references.
\begin{lemma}\label{k3graph}
If $|\pi(S)|=3$, then $G$ does not exist.
\end{lemma}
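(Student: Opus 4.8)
The plan is to show that the three primes of $\pi(S)$ cannot be laid out on $V_1\dot\cup V_2$ in a way compatible with $F(n)$: the split forces at least three ``extra'' primes coming from $R(G)$ onto the side carrying only one prime of $\pi(S)$, which is exactly the situation ruled out by Lemma \ref{rovi}. By Lemma \ref{claim}, $\pi(S)$ lies in neither $V_1$ nor $V_2$, so since $|\pi(S)|=3$ it splits as two primes on one side and one on the other; say $\pi(S)\cap V_2=\{p\}$ and $\pi(S)\setminus\{p\}\subseteq V_1$. The whole argument then reduces to showing $|(\rho(R(G))\setminus\pi(S))\cap V_2|\geqslant 3$ and applying Lemma \ref{rovi} with $i=1,\ j=2$.

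To make this counting valid I first need that no new prime enters through $\frac{G}{R(G)}$. Since by Lemma \ref{almost} $\frac{G}{R(G)}$ is almost simple with $S\cong\mathrm{PSL}_2(q)\leqslant \frac{G}{R(G)}\leqslant\mathrm{Aut}(S)$, I would determine all prime powers $q$ with $|\pi(S)|=3$. Writing $q=u^f$, the condition forces $q^2-1$ to be divisible by only two primes (a product of two coprime prime powers when $u=2$, and of the shape $2^a p^b$ when $u$ is odd, since $\gcd(q-1,q+1)\leqslant 2$); a short analysis of Mersenne/Fermat-type factorizations yields $q\in\{4,5,7,8,9,17\}$. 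In each of these finitely many cases one checks directly that $\pi(\mathrm{Out}(S))\subseteq\pi(S)$: the diagonal automorphism contributes only the prime $2\in\pi(S)$, and the field exponent satisfies $f\in\{1,2,3\}$ with $\pi(f)\subseteq\{2,3\}\subseteq\pi(S)$. Hence $\pi\big(\tfrac{G}{R(G)}\big)=\pi(S)$, so every prime divisor of $|G:R(G)|$ lies in $\pi(S)$.

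Next I would verify $\rho(G)\setminus\pi(S)\subseteq\rho(R(G))\setminus\pi(S)$. If $r\in\rho(G)\setminus\pi(S)$, choose $\chi\in\mathrm{Irr}(G)$ with $r\mid\chi(1)$ and a constituent $\theta\in\mathrm{Irr}(R(G))$ of $\chi_{R(G)}$; by Lemma \ref{11.29}, $\frac{\chi(1)}{\theta(1)}$ divides $|G:R(G)|$, whose prime divisors all lie in $\pi(S)$, so $r\nmid \frac{\chi(1)}{\theta(1)}$ and thus $r\mid\theta(1)$, giving $r\in\rho(R(G))$. Now the vertices of $V_2$ are primes of $\rho(G)$, and $V_2\cap\pi(S)=\{p\}$, so $V_2\setminus\{p\}=V_2\setminus\pi(S)\subseteq \rho(G)\setminus\pi(S)\subseteq\rho(R(G))\setminus\pi(S)$. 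Since $|V_2|=n$ and $n\geqslant 4$ by Lemma \ref{cardgraph}, we get $|(\rho(R(G))\setminus\pi(S))\cap V_2|\geqslant n-1\geqslant 3$, and Lemma \ref{rovi} produces the desired contradiction, so $G$ does not exist.

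The main obstacle is precisely the input needed to make the counting forceful for small graphs: one must know that the singleton side $V_2$ is not ``polluted'' by a prime entering only through $\mathrm{Out}(S)$ rather than through $R(G)$. Proposition B alone gives merely $|\pi(\mathrm{Out}(S))\setminus\pi(S)|\leqslant 2$, which would leave $|(\rho(R(G))\setminus\pi(S))\cap V_2|\geqslant n-3$ and hence settle only $n\geqslant 6$; the remaining cases $n\in\{4,5\}$ are exactly why the finite determination of the groups with $|\pi(S)|=3$, together with the verification that their outer-automorphism primes already lie in $\pi(S)$, is the crux of the proof.
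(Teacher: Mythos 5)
Your proof is correct and follows essentially the same route as the paper: establish $\pi\big(\tfrac{G}{R(G)}\big)=\pi(S)$ (the paper does this in one stroke by citing Herzog's classification of simple groups with exactly three prime divisors, where you re-derive the list $q\in\{4,5,7,8,9,17\}$ and check $\pi(\mathrm{Out}(S))\subseteq\pi(S)$ directly), then use Lemma \ref{claim} together with $|\rho(G)|\geqslant 8$ to force a $2{+}1$ split of $\pi(S)$ across $V_1,V_2$ with at least three primes of $\rho(R(G))\setminus\pi(S)$ on the singleton side, and conclude by Lemma \ref{rovi}. Your closing remark correctly identifies why Proposition B alone would not suffice here, which is precisely the role the Herzog citation plays in the paper's proof.
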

\begin{proof}
Using {\cite{herzog}}, it is clear that $\pi\Big(\frac{G}{R(G)}\Big) = \pi(S)$. Hence as $|\rho(G)|\geqslant 8$,  by Lemma \ref{claim} there exists $i\in\{1,2\}$ such that $|\pi(S)\cap V_i |=1$ and  $|(\rho(R(G))\setminus\pi(S))\cap V_i|\geqslant 3$. Thus using Lemma \ref{rovi}, we are done.
\end{proof}

\begin{lemma}
If $|\pi(S)|\geqslant 4$ and $\overline{\Delta(S)}$ is bipartite, then $G$ does not exist.
\end{lemma}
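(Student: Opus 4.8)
The plan is to first use the hypothesis that $\overline{\Delta(S)}$ is bipartite to pin down the isomorphism type of $\Delta(S)$, and then to transport the resulting clique structure into $F(n)$. By Lemma \ref{deltaS}, if $q$ is even then $\Delta(S)$ is the disjoint union of the three nonempty complete graphs on $\{2\}$, $\pi(q-1)$ and $\pi(q+1)$, so $\overline{\Delta(S)}$ is complete tripartite and contains a triangle; this contradicts bipartiteness. Likewise, if $q>5$ is odd and neither $q-1$ nor $q+1$ is a power of $2$, then the isolated vertex $u$ together with one prime of $M=\pi(q-1)\setminus\{2\}$ and one prime of $P=\pi(q+1)\setminus\{2\}$ spans a triangle in $\overline{\Delta(S)}$, which is again impossible. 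Hence the only surviving configuration is that $q>5$ is odd, $q-1$ or $q+1$ is a power of $2$, the set $\pi(q^2-1)$ induces a clique in $\Delta(S)$, and $u$ is an isolated vertex; here $m:=|\pi(q^2-1)|=|\pi(S)|-1\geqslant 3$.

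Next I would locate this clique inside $F(n)$. Since every degree in $\mathrm{cd}(S)$ divides some degree in $\mathrm{cd}(G)$, adjacency in $\Delta(S)$ forces adjacency in $\Delta(G)\cong F(n)$, so $\pi(q^2-1)$ is a clique of size $m\geqslant 3$ in $F(n)$. A short argument shows that any clique of $F(n)$ of size at least $3$ lies entirely within $W_1$ or within $W_2$: a clique meeting both parts can only be a matched pair $\{i,i'\}$, which has size $2$. Consequently $\pi(S)\setminus\{u\}=\pi(q^2-1)\subseteq V_i$ for some $i\in\{1,2\}$, which is precisely the configuration needed to invoke Lemma \ref{rovi} with $p=u$ and $\{i,j\}=\{1,2\}$.

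It then remains to verify the second hypothesis of Lemma \ref{rovi}, namely that $V_j$ contains at least three primes of $\rho(R(G))\setminus\pi(S)$. The key step, and the main obstacle, is to control $\pi(\mathrm{Out}(S))$, and for this I claim that $q$ is forced to be prime. Indeed, writing $q=u^f$ with $f\geqslant 2$ and using that $q-1$ or $q+1$ is a power of $2$, an elementary factorisation argument (or Mih\u{a}ilescu's theorem) leaves $q=9$ as the only possibility, and this is excluded because then $|\pi(S)|=3<4$. Thus $f=1$, so $\mathrm{Out}(S)\cong C_2$ and $\pi(\mathrm{Out}(S))\subseteq\pi(S)$. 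Now Lemma \ref{11.29}, combined with $\pi(G/R(G))\subseteq\pi(\mathrm{Aut}(S))=\pi(S)\cup\pi(\mathrm{Out}(S))$, shows that every prime of $\rho(G)\setminus\pi(S)$ already lies in $\rho(R(G))$. Since $\pi(S)\cap V_j\subseteq\{u\}$ and $|V_j|=n\geqslant 4$ by Lemma \ref{cardgraph}, we obtain $|(\rho(R(G))\setminus\pi(S))\cap V_j|\geqslant n-1\geqslant 3$. Lemma \ref{rovi} then yields that $G$ does not exist, which is the desired contradiction.
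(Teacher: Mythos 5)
Your proof is correct, and it follows the paper's skeleton in its first two steps: Lemma \ref{deltaS} together with bipartiteness of $\overline{\Delta(S)}$ forces $q$ odd with $\pi(q+\epsilon)=\{2\}$ and $|\pi(q-\epsilon)|=|\pi(S)|-1\geqslant 3$, and the clique $\pi(q^2-1)$ of size at least $3$ must lie inside a single part $V_i$ of $F(n)$, setting up Lemma \ref{rovi} with $p=u$. Where you genuinely diverge is in handling the primes arising from outer automorphisms, i.e.\ $\pi(|G:M|)$, which is exactly what stands between this configuration and the counting hypothesis $|(\rho(R(G))\setminus\pi(S))\cap V_j|\geqslant 3$. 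The paper deals with them character-theoretically via Lemma \ref{index}: the degrees $(q\pm 1)\,|G:G\cap\mathrm{PGL}_2(q)|$ of $G/R(G)$ glue every prime of $\pi(|G:M|)$ to the big clique, so $(\pi(S)\setminus\{u\})\cup\pi(|G:M|)\subseteq V_i$ and the count on $V_j$ follows. You instead prove an arithmetic claim the paper never states: since one of $q\pm 1$ is a power of $2$ and $q=9$ is excluded by $|\pi(S)|\geqslant 4$, the exponent $f$ must equal $1$ (by Mih\u{a}ilescu's theorem, or elementarily: for $f$ odd the cofactor $(q\pm 1)/(u\pm 1)$ is odd and exceeds $1$, while for $f$ even the factorization $q-1=(u^{f/2}-1)(u^{f/2}+1)$ forces $q=9$); hence $\mathrm{Out}(S)\cong C_2$, $\pi(G/R(G))\subseteq\pi(S)$, and Lemma \ref{11.29} alone yields the count. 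Your route is arguably cleaner at this point: Lemma \ref{index} formally requires $\alpha\geqslant 2$ and $q\neq 9$, so the paper's one-line invocation silently leaves the prime-$q$ case (where the conclusion is trivial, as $\pi(|G:M|)\subseteq\{2\}$) to the reader, whereas your argument shows the prime-$q$ case is in fact the only one that can occur. What the paper's route buys is independence from this arithmetic: it would go through verbatim in settings where $q$ remains a proper prime power, while yours exploits the special number theory forced by bipartiteness.
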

\begin{proof}
Obviously,  $q$ is odd, and for some $\epsilon\in\{\pm 1\}$, $\pi(q+\epsilon)=\{2\}$ and $|\pi(q-\epsilon)|\geqslant3$. Hence using the structure of $F(n)$, Lemma \ref{index} implies that $(\pi(S)\setminus \{u\}) \cup \pi(|G:M|)\subseteq V_i$, for some $i\in\{1,2\}$. Thus as   $|\rho(G)|\geqslant 8$, using Lemma \ref{rovi}, we have nothing to prove.
\end{proof}

\begin{lemma}\label{starlemma}
Let for some $i\in\{1,2\}$ and $s\in\{1,2,\ldots , n\}$,  ${a_{i}}_{s}\in (\rho(R(G))\setminus\pi(S)) \cap V_i$ and ${a_{i}}_{s}, {a_{j}}_{s}\notin \pi(|G:M|)$, where $\{i,j\}=\{1,2\}$. Also suppose that either:\\
{\bf 1.} $q$ is odd, $|\pi(q\pm 1)|\geqslant 2$ and for some $\epsilon\in\{\pm 1\}$, $\pi(q+\epsilon)\subseteq V_j$, or\\
{\bf 2.} $q$ is even and for some $\epsilon\in\{\pm 1\}$, $|\pi(q+\epsilon)|\geqslant 2$ and $\pi(q+\epsilon)\subseteq V_j$.\\
Then $G$ does not exist.
\end{lemma}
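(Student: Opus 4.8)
The plan is to fix a character of the radical that detects the prime ${a_i}_s$ and to manufacture from it an irreducible character of $G$ whose degree is divisible by ${a_i}_s$ together with a prime of $\pi(q+\epsilon)$ different from ${a_j}_s$; since in $F(n)$ the vertex ${a_i}_s\in V_i$ has ${a_j}_s$ as its only neighbour in $V_j$, such a degree is forbidden. Concretely, I would choose $\theta\in\mathrm{Irr}(R(G))$ with ${a_i}_s\mid\theta(1)$. For every $\chi\in\mathrm{Irr}(G\mid\theta)$ Clifford's theorem gives $\theta(1)\mid\chi(1)$, hence ${a_i}_s\mid\chi(1)$, so no prime of $V_j\setminus\{{a_j}_s\}$ can divide any such $\chi(1)$. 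As $\pi(q+\epsilon)\subseteq V_j$ with $|\pi(q+\epsilon)|\geqslant 2$, the set $\pi(q+\epsilon)\setminus\{{a_j}_s\}$ is non-empty, and it suffices to force one of its primes into some degree of $\mathrm{cd}(G\mid\theta)$.

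Next I would analyse the inertia group $I:=I_G(\theta)$ through $H:=I_M(\theta)/R(G)\leqslant S\cong\mathrm{PSL}_2(q)$ using Dickson's classification of subgroups. The first engine is an index bound: taking a Clifford correspondent of $\theta$ in $I_M(\theta)$, inducing to $M$ and then to $G$ shows that $|S:H|$ divides $\chi(1)$ for a suitable $\chi\in\mathrm{Irr}(G\mid\theta)$, so every prime dividing $|S:H|$ is adjacent to ${a_i}_s$. Thus if some $t\in\pi(q+\epsilon)\setminus\{{a_j}_s\}$ divides $|S:H|$ we are done. To avoid this, $H$ must contain a full Sylow $t$-subgroup of $S$ for each such $t$; choosing a primitive prime divisor of $q+\epsilon$ (Zsigmondy) excludes the subfield subgroups, leaving only $H=S$ and the subgroups of the normaliser of the $(q+\epsilon)$-torus that contain that torus. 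When $H=S$ and $\theta$ is $M$-invariant and extends, Gallagher's theorem places $\theta(1)(q-1)$ and $\theta(1)(q+1)$ in $\mathrm{cd}(M\mid\theta)$, so all of $\pi(q+\epsilon)$ divides one degree and the forbidden edge appears; the invariant non-extending subcase I would fold into the torus analysis below via Lemma \ref{notri}.

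For $\epsilon=-1$ the split torus $T_{q-1}$ sits inside a Borel subgroup, and here I would invoke Lemma \ref{frob.psl}, since then $I/R(G)$ is a Frobenius group with elementary abelian $u$-kernel. If $\theta$ extends, a degree $\theta(1)b$ with $\frac{q^2-1}{\gcd(2,q-1)}\mid b$ occurs, so $\pi(q-1)\subseteq\pi(\theta(1)b)$ and two vertices of $V_j$ become adjacent to ${a_i}_s$. If $\theta$ does not extend, every degree of $\mathrm{cd}(G\mid\theta)$ is divisible by $u(q+1)\theta(1)$, so ${a_i}_s$ is adjacent to all of $\pi(q+1)$; since $2\in\pi(q+1)\cap\pi(q-1)\subseteq V_j$, this forces ${a_j}_s=2$, whence $\pi(q+1)\setminus\{2\}\subseteq V_i$. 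But by Lemma \ref{deltaS} the vertex $2={a_j}_s$ is adjacent in $\Delta(S)$ to every prime of $\pi(q+1)\setminus\{2\}$, whose only admissible $V_i$-neighbour is its match ${a_i}_s\notin\pi(S)$; hence $\pi(q+1)=\{2\}$, contradicting $|\pi(q+1)|\geqslant 2$. For $\epsilon=+1$ any $H$ inside a Borel gives $q+1\mid|S:H|$, so the index bound already wins.

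The hard part will be the genuinely non-Frobenius configuration, where $H$ contains the $(q+\epsilon)$-torus but lies in $N_S(T_{q+\epsilon})$ rather than in a Borel (for $\epsilon=+1$ this is the non-split torus normaliser). There the index forces ${a_i}_s$ adjacent only to $u$ and to $\pi(q-\epsilon)\setminus\{2\}$, never to $\pi(q+\epsilon)$, so Lemma \ref{frob.psl} is unavailable. My plan is to transport the contradiction onto the prime $2\in\pi(q+\epsilon)\subseteq V_j$: using that $2$ is adjacent in $\Delta(S)$ to every prime of $\pi(q-\epsilon)\setminus\{2\}$ (Lemma \ref{deltaS}), that ${a_i}_s$ drives all but at most one of $\{u\}\cup(\pi(q-\epsilon)\setminus\{2\})$ into $V_i$, and that $2$ has a unique neighbour in $V_i$, I would squeeze $|\pi(q-\epsilon)|$ against the hypothesis $|\pi(q\pm 1)|\geqslant 2$, pinning the positions of $u$, $2$ and ${a_j}_s$ with the help of ${a_i}_s\notin\pi(S)$ and ${a_j}_s\notin\pi(|G:M|)$. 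The even case runs in parallel, with $u=2$ and the three complete components of $\Delta(S)$ playing the above role. Discharging this torus-normaliser configuration cleanly, especially the boundary subcase $|\pi(q-\epsilon)|=2$, is the most delicate step, and I expect to need the precise placement enforced by Lemmas \ref{claim} and \ref{rovi} to close it.
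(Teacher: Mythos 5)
Your opening reduction is fine, and two of your partial arguments (the index-divisibility observation, and the Borel/Frobenius analysis for odd $q$ via Lemma \ref{frob.psl}) are correct. But the proposal has a structural gap that you yourself flag and never close: the torus-normaliser case. The root cause is your choice of $\theta$. You take an \emph{arbitrary} $\theta\in\mathrm{Irr}(R(G))$ with ${a_i}_s\mid\theta(1)$ and track only ${a_i}_s$. The paper instead starts from the edge ${a_i}_s{a_j}_s$ of $F(n)$: there is $\chi\in\mathrm{Irr}(G)$ with ${a_i}_s{a_j}_s\mid\chi(1)$; since ${a_i}_s,{a_j}_s\notin\pi(|G:M|)$, Lemma \ref{11.29} pushes \emph{both} primes onto a constituent $\phi\in\mathrm{Irr}(M)$ of $\chi_M$, and then ${a_i}_s$ onto a constituent $\theta$ of $\phi_{R(G)}$. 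This stronger invariant makes every case of Dickson's list terminal: a matched pair in $F(n)$ has \emph{no common neighbour}, so \emph{any} third prime $y$ with ${a_i}_s{a_j}_s y\mid\phi(1)$ is an immediate contradiction, no matter whether $y$ lands in $V_i$ or $V_j$. In particular the dihedral (torus-normaliser) case dies in one line: the index $|S:N|$ forces some $y\in\pi\bigl(u\frac{q+\delta}{d}\bigr)\setminus\{{a_j}_s\}$ into $\phi(1)$. With your weaker invariant, the index primes $u$ and $\pi(q-\epsilon)\setminus\{2\}$ may all legitimately sit in $V_i$ (they are then just neighbours of ${a_i}_s$ inside its own clique), and no contradiction results; this is exactly why your sketch stalls. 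Note also that your claim that ``the even case runs in parallel'' is false at the one place where you do give a complete argument: the non-extending Frobenius subcase uses $2\in\pi(q+1)\cap\pi(q-1)$, which requires $q$ odd; for even $q$ the clique $\{{a_i}_s,2\}\cup\pi(q+1)$ can simply lie in $V_i$ and your contradiction evaporates.

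Two further concrete defects. First, your Dickson sieve (``Zsigmondy excludes the subfield subgroups, leaving only $H=S$ and subgroups of the normaliser of the $(q+\epsilon)$-torus'') is leaky: the primitive prime divisor of $q+\epsilon$ may fail to exist or may equal ${a_j}_s$, in which case it is unusable, and in any event $A_4$, $S_4$, $A_5$, $\mathrm{PGL}_2(u^m)$ and the subfield groups $\mathrm{PSL}_2(u^m)$ can contain full Sylow $t$-subgroups for the remaining primes $t\in\pi(q+\epsilon)\setminus\{{a_j}_s\}$ (e.g.\ $q=11$, $\pi(q+1)=\{2,3\}$, ${a_j}_s=3$: any subgroup containing a Sylow $2$-subgroup, such as $A_4$ or $A_5$, survives your sieve). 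The paper must and does treat all of these as separate cases, using the Schur representation groups $\mathrm{SL}_2(5)$ and $\mathrm{SL}_2(u^m)$ together with Zsigmondy's theorem inside those cases. Second, for $H=S$ with $\theta$ invariant but not extendible, ``fold into the torus analysis via Lemma \ref{notri}'' is not an argument: Lemma \ref{notri} only produces two primes of $\pi(S)$ dividing some $\chi(1)/\theta(1)$, and both may lie in $V_i$, which is perfectly consistent with the structure of $F(n)$. The paper disposes of this subcase by the fact that $\mathrm{SL}_2(u^f)$ is the Schur representation group of $S$, so $\mathrm{cd}(M|\theta)$ contains a degree divisible by $\theta(1)(q+\epsilon)$ even when $\theta$ does not extend, putting two primes of $\pi(q+\epsilon)\subseteq V_j$ on a degree divisible by ${a_i}_s$. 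Until the torus-normaliser case, the small-subgroup cases, and the non-extending $H=S$ case are actually discharged, the proposal is a plan rather than a proof.
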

\begin{proof}
By assumption, there exists $\chi\in\mathrm{Irr}(G)$ such that ${a_{i}}_{s} {a_{j}}_{s}\, |\, \chi(1)$. Let $\phi\in\mathrm{Irr}(M)$ be a constituent of $\chi_{M}$ and $\theta\in\mathrm{Irr}(R(G))$ be a constituent of $\phi_{R(G)}$. As ${a_{i}}_{s}, {a_{j}}_{s}\notin \pi(|G:M|)$, Lemma \ref{11.29} implies that ${a_{i}}_{s} {a_{j}}_{s} | \phi(1)$. Hence as ${a_{i}}_{s} \notin \pi(S)$, again using Lemma \ref{11.29}, ${a_{i}}_{s}  | \theta(1)$. Let $I:=I_{M}(\theta)$ and $N:=\frac{I}{R(G)}$. We use Dickson's list for the structure of $N$. First assume that $N=S$. Then as $\mathrm{SL}_{2}(u^f)$ is the Schur representation group of $S$, for some $m\in\mathrm{cd}(M|\theta)$ we have ${a_{i}}_{s} (q+\epsilon) | m$. Hence  $\Delta(G)[\pi(m)]$ is a complete graph with at least three vertices. It is a contradiction. Thus $N\lneq S$ and one of the following cases occurs. Set $d:=gcd (2,q-1)$.\\
{\it Case 1) } $N$ is an elementary abelian $u$-group. Then by Clifford's theorem, for some $y\in \pi\Big (\frac{q^2-1}{d}\Big)\setminus \{{a_{j}}_{s}\}$, we have ${a_{i}}_{s} {a_{j}}_{s} y | \phi(1)$ which is a contradiction.\\
{\it Case 2) } $N$ is contained in a dihedral group. Then using Clifford's theorem, for some $\delta\in\{\pm 1\}$ there exists $y\in \pi\Big (u\frac{q+\delta}{d}\Big) \setminus \{{a_{j}}_{s}\}$, such that ${a_{i}}_{s} {a_{j}}_{s} y | \phi(1)$. It is again a contradiction.\\
{\it Case 3) } $N$ is a Frobenius group whose kernel is an elementary abelian $u$-group. Then using Lemma \ref{frob.psl}, either ${a_{i}}_{s} {a_{j}}_{s} y | \phi(1)$, for some $y\in \pi(u(q+1))\setminus\{{a_{j}}_{s}\}$,  or we have ${a_{i}}_{s} \Big( \frac{q^2-1}{d} \Big) | m$, for some $m\in\mathrm{cd}(M|\theta)$. It is a contradiction with the structure of   $F(n)$.\\
{\it Case 4) } $N\cong A_5$. Then as $\mathrm{SL}_{2}(5)$ is the Schur representation group of $A_5$, using Clifford's theorem one of the following occurs:\\
{\it a) }  $q$ is even. Then for some $m\in\mathrm{cd}(M|\theta)$, $m$ is divisible by $\theta(1) \frac{q(q^2-1)}{20}$. Hence for some $y\in \pi(q+\epsilon)\setminus\{5\}$, we have $2 {a_{i}}_{s} y | m$. Thus $\Delta(G)[\{2, {a_{i}}_{s}, y\}]\cong K_3$ which is a contradiction. \\
{\it b) } $q$ is odd. Then for some $m\in\mathrm{cd}(M|\theta)$, $m$ is divisible by $\theta(1) \frac{q(q^2-1)}{30}$. Hence $2 {a_{i}}_{s} y | m$, for some $y\in \pi(u(q^2-1))\setminus \{2,3,5\}$. This is a contradiction.\\
{\it Case 5) } Either $N\cong A_4$ and $8| q^2-1$  or $N\cong S_4$ and $16 | q^2-1$. Then using Clifford's theorem $\theta(1) \frac{q(q^2-1)}{2 |N|} \Big| \phi(1)$. Hence as $|\pi(S)|\geqslant 4$, there exists $y\in \pi(S)\setminus\{2,3,{a_{j}}_{s}\}$ such that ${a_{i}}_{s} {a_{j}}_{s} y | \phi(1)$. This is again a contradiction.\\
{\it Case 6) } $N\cong \mathrm{PSL}_{2}(u^m)$, where $m\neq f$ is a positive divisor of $f$, $u^m\geqslant 7$ and $u^m\neq 9$. Then as $\mathrm{SL}_{2}(u^{m})$ is the Schur representation group of $N$, using Clifford's theorem $l:=\theta(1) u^{f-m} (u^{m}+1) \frac{(u^f+1)(u^f-1)}{u^{2m}-1}\in\mathrm{cd}(M|\theta)$. Thus by Zsigmondy's theorem there exists $y\in \pi(q+\epsilon)\cap \pi\Big(\frac{u^{2f}-1}{u^m -1} \Big)$ such that ${a_{i}}_{s} uy\, |\, l$ which is a contradiction.\\
{\it Case 7) } $N\cong \mathrm{PSL}_{2}(9)$, such that $u=3$ and  $f\geqslant 4$ is even. Then by Clifford's theorem $\phi(1)$ is divisible by $\theta(1) \frac{3(q^2-1)}{80}$.  Hence as $|\pi(S)|\geqslant 4$, there exists $y\in \pi(S)\setminus\{2,5,{a_{j}}_{s}\}$ so that ${a_{i}}_{s} {a_{j}}_{s} y | \phi(1)$, which is impossible.\\
{\it Case 8) } $N\cong \mathrm{PGL}_{2}(u^m)$, where $u$ is odd and $2m$ is a positive divisor of $f$. Then using Clifford's theorem $\phi(1)$ is divisible by $\theta(1) |S:N|=\theta(1)  u^{f-m} (u^f+1) \frac{(u^{f}-1)/2}{(u^{2m}-1)}$. Hence as $2m | f$, by Zsigmondy's theorem we deduce that there exists $y\in\Big(\pi(q+\epsilon)\cap\pi\Big( \frac{(u^f+1) (u^f-1)}{u^{2m}-1} \Big)\Big)\setminus\{2\}$ such that ${a_{i}}_{s} uy | \phi(1)$. This is a contradiction and completes the proof.
\end{proof}

\begin{lemma}\label{star2}
Let for some $\epsilon\in\{\pm 1\}$, $|\pi(q+\epsilon )|\geqslant 2$. Also let  $\pi(q+\epsilon )\cup \pi(|G:M|)\subseteq V_j$ and  $(\rho(R(G))\setminus\pi(S))\cap V_i\neq\emptyset$, where $\{i,j\}=\{1,2\}$. Then $G$ does not exist.
\end{lemma}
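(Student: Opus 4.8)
The plan is to show that the prime $x:={a_i}_s$ provided by the hypothesis $(\rho(R(G))\setminus\pi(S))\cap V_i\neq\emptyset$ is adjacent in $\Delta(G)$ to \emph{two} distinct primes of $\pi(q+\epsilon)$. Since $\pi(q+\epsilon)\subseteq V_j$ and the only edges of $F(n)$ between $V_i$ and $V_j$ form a perfect matching, each vertex of $V_i$ has exactly one neighbour in $V_j$; two such adjacencies are therefore impossible and $G$ cannot exist. The mechanism is clean: if $x\,r_1\,r_2\mid\chi(1)$ for some $\chi\in\mathrm{Irr}(M)$ with $r_1,r_2\in\pi(q+\epsilon)$ distinct, then lifting $\chi$ to $G$ shows $x,r_1,r_2$ are pairwise adjacent, and as $x\notin\pi(S)$ these primes are genuinely distinct. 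Observe first that $x\in V_i$ together with $\pi(|G:M|)\subseteq V_j$ already forces $x\notin\pi(|G:M|)$. When the matched partner ${a_j}_s$ also avoids $\pi(|G:M|)$ this is exactly the setting of Lemma~\ref{starlemma} (immediately for $q$ even), so the essential new case is ${a_j}_s\in\pi(|G:M|)$, where that lemma's descent step is blocked; my goal is to bypass it by producing the two adjacencies directly.

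To do this I would fix $\theta\in\mathrm{Irr}(R(G))$ with $x\mid\theta(1)$ and work inside $\mathrm{Irr}(M\mid\theta)$, where $M/R(G)\cong S\cong\mathrm{PSL}_2(q)$. By Lemma~\ref{k3graph} we may assume $|\pi(S)|\geqslant 4$, so $S\not\cong A_5,\mathrm{PSL}_2(8)$ and the extendibility branch of Lemma~\ref{notri} is excluded; in particular the two $\pi(S)$-primes it produces both land in $V_i$, giving $|\pi(S)\cap V_i|\geqslant 2$. Setting $I:=I_M(\theta)$ and $N:=I/R(G)\leqslant S$, I would then run Dickson's list for $N$ as in Lemma~\ref{starlemma}, but aim in each case to exhibit a degree in $\mathrm{cd}(M\mid\theta)$ divisible by $x$ together with a full torus factor $q+\epsilon$, rather than by $x$, the matched partner, and a single further prime. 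When $N=S$ the degree $\theta(1)(q+\epsilon)$ does this at once; when $N$ is elementary abelian, $|S:N|$ is divisible by $(q^2-1)/\gcd(2,q-1)\supseteq\pi(q+\epsilon)$; when $N$ lies in a Frobenius group with abelian $u$-kernel, Lemma~\ref{frob.psl} supplies a degree divisible either by $(q^2-1)/\gcd(2,q-1)$ or by $u(q+1)$; and the small exceptional subgroups $A_4,S_4,A_5,\mathrm{PSL}_2(9)$ are eliminated using $|\pi(S)|\geqslant 4$ and Clifford's theorem exactly as before.

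The hard part will be the dihedral case and the large cases $N\cong\mathrm{PSL}_2(u^m)$, $\mathrm{PGL}_2(u^m)$ with $m\mid f$, $m\neq f$. Two difficulties arise: the torus factor surviving in $|S:N|$ may be $q-\epsilon$ rather than $q+\epsilon$, whose position in the bipartition is only partly known, and the denominator $u^{2m}-1$ may absorb a prime of $\pi(q+\epsilon)$, so that \emph{a priori} only one such prime is guaranteed (which is all Lemma~\ref{starlemma} required). I would control this with Zsigmondy's theorem applied to $u^{2f}-1$, whose primitive prime divisor lies in $\pi(q+\epsilon)$ and in $\pi\!\left(\frac{u^{2f}-1}{u^{2m}-1}\right)$, combined with the rigidity forced in the odd case by Lemma~\ref{deltaS}: since $2$ is adjacent to every prime of $\pi(q^2-1)$ and $2\in\pi(q+\epsilon)\subseteq V_j$, at most one prime of $\pi(q^2-1)$ can lie in $V_i$, so all but one prime of $q^2-1$ sits in $V_j$. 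The tightest point is $|\pi(q+\epsilon)|=2$, and verifying there that a second $V_j$-prime genuinely divides the chosen degree — that it is neither swallowed by $u^{2m}-1$ nor equal to the single exceptional prime of $V_i$ — is the crux of the argument; once this is established, $x$ acquires two neighbours in $V_j$, the matching structure of $F(n)$ is violated, and the proof is complete.
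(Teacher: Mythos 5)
Your reduction is set up correctly: $x={a_i}_s\notin\pi(|G:M|)$ is automatic, the triangle/matching contradiction mechanism is sound, and you rightly isolate ${a_j}_s\in\pi(|G:M|)$ as the only situation not already covered by Lemma~\ref{starlemma}. But your plan for that situation has a genuine gap --- one you flag yourself as ``the crux'' and never close. When $N$ is dihedral, or $N\cong\mathrm{PSL}_2(u^m)$ or $\mathrm{PGL}_2(u^m)$, the torus factor surviving in the Clifford divisor can be $q-\epsilon$; if $\pi(q-\epsilon)$ has at most one odd prime, or if the relevant Zsigmondy prime happens to be the single prime of $\pi(q^2-1)$ permitted in $V_i$, your construction yields only primes that may all sit in $V_i$, hence no forbidden configuration. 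Worse, the rigidity you invoke to repair this --- ``at most one prime of $\pi(q^2-1)$ lies in $V_i$, since $2$ is adjacent to all of $\pi(q^2-1)$'' --- is a $q$-odd fact (Lemma~\ref{deltaS}(2)); for $q$ even, $2=u$ is an isolated vertex of $\Delta(S)$, so that constraint is simply unavailable. Since the paper applies Lemma~\ref{star2} precisely in the even case (Lemma~\ref{even1}) as well as the odd one, the proposal as written does not prove the lemma.

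The paper's proof sidesteps the whole difficulty with an argument you did not consider. First split on $\pi(q-\epsilon)$. If $\pi(q-\epsilon)\subseteq V_j$, then $(\pi(S)\setminus\{u\})\cup\pi(|G:M|)\subseteq V_j$, so $V_i$ contains at least $n-1\geqslant 3$ primes of $\rho(R(G))\setminus\pi(S)$, and Lemma~\ref{rovi} finishes. Otherwise fix $v\in\pi(q-\epsilon)\cap V_i$. If ${a_j}_s\in\pi(|G:M|)$, then Lemma~\ref{index}, applied to the almost simple group $G/R(G)$, produces an irreducible character of $G$ of degree $(q-\epsilon)\,|G:G\cap\mathrm{PGL}_2(q)|$, which is divisible by $v\cdot {a_j}_s$; thus $v$ and ${a_j}_s$ would be adjacent in $\Delta(G)$, impossible because the unique $V_i$-neighbour of ${a_j}_s$ in $F(n)$ is ${a_i}_s\neq v$. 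Hence ${a_j}_s\notin\pi(|G:M|)$, both endpoints of the matching edge avoid $\pi(|G:M|)$, and Lemma~\ref{starlemma} applies verbatim. In short, the missing idea is that the outer part of $G/R(G)$ itself contributes degrees containing the \emph{full} torus factors $q\pm 1$ (Lemma~\ref{index}), and this kills exactly the case your Dickson-list rerun cannot reach; no new Clifford analysis is needed at all.
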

\begin{proof}
Let  ${a_{i}}_{s}\in (\rho(R(G))\setminus\pi(S))\cap V_i$, for some  $s\in\{1, 2, \ldots , n\}$. If $\pi(q-\epsilon)\subseteq V_j$, then  $(\pi(S)\setminus\{u\})\cup\pi(|G:M|)\subseteq V_j$, and  as   $|\rho(G)|\geqslant 8$, by Lemma \ref{rovi} we are done. Thus we can assume that $\pi(q-\epsilon )\cap V_i\neq \emptyset$. Let $v\in \pi(q-\epsilon )\cap V_i$. If ${a_{j}}_{s}\in\pi(|G:M|)$, then using Lemma \ref{index}, ${a_{j}}_{s}$ and $v$ are adjacent vertices in $\Delta(G)$  which is a contradiction with the choice of ${a_{j}}_{s}$. Hence ${a_{j}}_{s}\notin \pi(|G:M|)$ and as ${a_{i}}_{s}\notin\pi(|G:M|)$,  
Lemmas \ref{rovi} and \ref{starlemma} 
complete the proof.
\end{proof}

\begin{lemma}\label{odd}
If $q\neq 5$ is odd and $\overline{\Delta(S)}$ is non-bipartite, then $G$ does not exist.
\end{lemma}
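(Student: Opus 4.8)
The plan is to reduce the statement to Lemma~\ref{star2} by pinning down the exact shape of $\Delta(S)$ and then locating the two cliques $\pi(q-1)$ and $\pi(q+1)$ inside the halves $V_1,V_2$ of $F(n)$. First I would determine $\Delta(S)$. Since $q\neq 5$ is odd we have $q>5$, so Lemma~\ref{deltaS}, part~2, applies. If either $q-1$ or $q+1$ were a power of $2$, then $\Delta(S)$ would be the disjoint union of the isolated vertex $\{u\}$ with a complete graph, whose complement is a star and hence bipartite; as $\overline{\Delta(S)}$ is assumed non-bipartite, we must be in part~2(b). Thus $\pi((q-1)(q+1))=\{2\}\cup M\cup P$ with $M=\pi(q-1)\setminus\{2\}$ and $P=\pi(q+1)\setminus\{2\}$ both nonempty, where $\{2\}\cup M$ and $\{2\}\cup P$ induce cliques of $\Delta(S)\subseteq\Delta(G)$ and every prime of $M\cup P$ is adjacent to $2$.

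Second, I would exploit the rigid clique structure of $F(n)$: every clique of order at least $3$ lies entirely inside $V_1$ or inside $V_2$, and each vertex has exactly one neighbour in the opposite part. Fix the part containing $2$, say $2\in V_1$. Since $2$ is adjacent to all of $M\cup P$ but has only one neighbour in $V_2$, at most one prime of $M\cup P$ can lie in $V_2$, and such a prime is necessarily the matching partner of $2$. A short case split then produces $\epsilon\in\{\pm1\}$ with $\pi(q+\epsilon)\subseteq V_1$ and $|\pi(q+\epsilon)|\geq 2$: if $M\cup P\subseteq V_1$ either choice works, and if a single prime $w\in M\cup P$ lies in $V_2$, then the clique built on the \emph{other} of $q\pm1$ stays inside $V_1$ and still has order at least $2$, because the corresponding set is nonempty.

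Third, I would show $\pi(|G:M|)\subseteq V_1$. If $q$ is prime the outer primes are contained in $\{2\}\subseteq V_1$ and there is nothing to do; moreover $q=9$ need not be treated, since there $|\pi(S)|=3$ is already handled by Lemma~\ref{k3graph}. Otherwise $f\geq 2$ and $q\neq 9$, so by Lemma~\ref{index} every odd prime of $\pi(|G:M|)$ divides $|\tfrac{G}{R(G)}:\tfrac{G}{R(G)}\cap\mathrm{PGL}_2(q)|$ and is therefore adjacent to every prime of $\pi(q-1)\cup\pi(q+1)$; as at least two of those primes lie in $V_1$, such a prime cannot sit in $V_2$, where it would have a single neighbour across the cut. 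Hence $\pi(|G:M|)\subseteq V_1$.

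Finally, I would verify the last hypothesis of Lemma~\ref{star2}. By Lemma~\ref{cardgraph} we have $n\geq 4$, so $V_2$ has at least four vertices, of which at most two (the prime $u$ and the possible $w$) lie in $\pi(S)$ and none lie in $\pi(|G:M|)$; consequently $(\rho(R(G))\setminus\pi(S))\cap V_2\neq\emptyset$. Applying Lemma~\ref{star2} with this $\epsilon$, $j=1$ and $i=2$ then yields a contradiction, so $G$ does not exist. I expect the delicate point to be the second step, namely arguing that the two cliques $\pi(q-1)$ and $\pi(q+1)$, which overlap only in $2$, are forced to align inside one half of $F(n)$, together with the bookkeeping that keeps the outer primes out of $V_2$; everything else is a direct appeal to the machinery already assembled in Lemmas~\ref{index}, \ref{star2} and \ref{cardgraph}.
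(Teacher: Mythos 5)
Your proposal is correct and follows essentially the same route as the paper's proof: identify case 2(b) of Lemma \ref{deltaS}, use the clique structure of $F(n)$ together with Lemma \ref{index} to force $\pi(q+\epsilon)\cup\pi(|G:M|)$ into one half of the graph, locate a prime of $\rho(R(G))\setminus\pi(S)$ in the other half, and conclude by Lemma \ref{star2}. Your bookkeeping is slightly cleaner (anchoring everything at the vertex $2$ and its unique cross-neighbour avoids the paper's case splits on $|\pi(q\pm 1)|$ and on $\pi(|G:M|)\setminus\pi(S)$, and you explicitly address the applicability of Lemma \ref{index} when $q$ is prime or $q=9$), but the strategy and the key lemmas invoked are identical.
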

\begin{proof}
It is clear that $|\pi(q\pm 1)|\geqslant 2$. We claim that for some $j\in\{1,2\}$ and $\epsilon\in\{\pm 1\}$, $\pi(q+\epsilon )\subseteq V_j$. If for some $\epsilon\in\{\pm 1\}$,  $|\pi(q+\epsilon )|\geqslant 3$, then using the structure of $F(n)$, we obtain the claim. Hence $|\pi(q\pm 1)|=2$. Thus as $gcd(q+1,q-1)=2$, by the structure of $F(n)$, for some $j\in\{1,2\}$ and $\epsilon\in\{\pm 1\}$, we have $\pi(q+\epsilon )\subseteq V_j$, as we claimed. Hence  Lemma \ref{index} implies that $\pi(|G:M|)\cup\pi(q+\epsilon )\subseteq V_j$. If either $\pi(|G:M|)\setminus\pi(S)\neq \emptyset$ or $\pi(|G:M|)\subseteq\pi(S)$ and $|\pi(q-\epsilon)\setminus\{2\}|\geqslant 2$, then using Lemma \ref{index}, $(\pi(S)\setminus\{u\})\cup\pi(|G:M|)\subseteq V_j$. Therefore as    $|\rho(G)|\geqslant 8$, by Lemma \ref{star2}, we have nothing to prove. Hence $\pi(|G:M|)\subseteq\pi(S)$ and $|\pi(q-\epsilon)|=2$. Thus as $\pi(q+\epsilon )\cup\pi(|G:M|)\subseteq V_j$ and $|\rho(G)|\geqslant 8$, Lemma \ref{star2} completes the proof.
\end{proof}

\begin{lemma}\label{even1}
Let $q$ be even and $|\pi(S)|\geqslant 4$. Also when $\pi(|G:M|)\subseteq \pi(S)$, we have $\rho(R(G))\setminus\pi(S)\neq \emptyset$. Then $G$ does not exist.
\end{lemma}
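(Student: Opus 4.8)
Since the earlier lemmas have reduced us to $S\cong\mathrm{PSL}_2(q)$ with $q=2^f$ even and $|\pi(S)|\geq 4$, Lemma \ref{deltaS} (part 1) describes $\Delta(S)$: the prime $2$ is isolated, while $\pi(q-1)$ and $\pi(q+1)$ are disjoint cliques with no edges between them. As $|\pi(S)|=1+|\pi(q-1)|+|\pi(q+1)|\geq 4$, we have $|\pi(q-1)|+|\pi(q+1)|\geq 3$, so at least one of $\pi(q\pm 1)$ has two or more primes; moreover $q\geq 16$, so Lemma \ref{index} applies and $G$ has characters of degrees $(q+1)|G:M|$ and $(q-1)|G:M|$, whence each of $\pi(q+1)\cup\pi(|G:M|)$ and $\pi(q-1)\cup\pi(|G:M|)$ is a clique of $\Delta(G)$. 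Throughout I use the basic fact that, because the only edges of $F(n)$ joining $V_1$ to $V_2$ form a matching and so lie in no triangle, every clique of $\Delta(G)\cong F(n)$ of size at least $3$ is contained in $V_1$ or in $V_2$; also $n\geq 4$ by Lemma \ref{cardgraph}.

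The plan is to reduce to Lemma \ref{star2}. First I would establish the structural claim that, for some $\epsilon\in\{\pm 1\}$ and $j\in\{1,2\}$, one has $|\pi(q+\epsilon)|\geq 2$ and $\pi(q+\epsilon)\subseteq V_j$. If some $\pi(q\pm 1)$ has at least three primes it is a clique of size $\geq 3$, hence lies in one part and the claim holds; otherwise the parts of size $\geq 2$ are single edges, and the claim holds as soon as one such edge lies inside a single part. The only way the claim can fail is the residual situation in which every $\pi(q\pm 1)$ of size $\geq 2$ is a matching edge of $F(n)$. Granting the claim, Lemma \ref{index} upgrades it to $\pi(q+\epsilon)\cup\pi(|G:M|)\subseteq V_j$, since any prime of $\pi(|G:M|)$ is adjacent to the two or more primes of $\pi(q+\epsilon)\subseteq V_j$ and so cannot cross the partition. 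If $(\rho(R(G))\setminus\pi(S))\cap V_i\neq\emptyset$ (with $\{i,j\}=\{1,2\}$) then Lemma \ref{star2} finishes. If not, every prime of $\rho(R(G))\setminus\pi(S)$ and of $\pi(|G:M|)$ lies in $V_j$, forcing $V_i\subseteq\{2\}\cup\pi(q-\epsilon)$; as $|V_i|=n\geq 4$ this makes $\pi(q-\epsilon)\subseteq V_i$ a clique of size $\geq n-1\geq 3$. Now either $\pi(|G:M|)\neq\emptyset$, in which case $\pi(|G:M|)\cup\pi(q-\epsilon)$ is a clique of size $\geq 3$ meeting both parts, which is impossible, or $\pi(|G:M|)=\emptyset$, in which case the claim also holds with $(\epsilon,j)$ replaced by $(-\epsilon,i)$ and, since $\rho(R(G))\setminus\pi(S)\subseteq V_j$ is nonempty by the hypothesis of the lemma, Lemma \ref{star2} applies with $V_i,V_j$ interchanged. (The degenerate possibility $\rho(R(G))\setminus\pi(S)=\emptyset$ forces $\pi(|G:M|)\not\subseteq\pi(S)$ by the hypothesis, hence $\pi(|G:M|)\neq\emptyset$, and is absorbed into the first alternative.)

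The residual case is where the real work lies, and it is exactly the phenomenon the extra hypothesis is designed for: unlike the odd case of Lemma \ref{odd}, where $2$ is shared by $\pi(q-1)$ and $\pi(q+1)$ and pins their location, here $2$ is isolated, so both nontrivial cliques may be matching edges. In this case Lemma \ref{index} forces $\pi(|G:M|)$ into the prime sets of these edges, so $\pi(|G:M|)\subseteq\pi(S)$ (and $\pi(|G:M|)=\emptyset$, $G=M$, when two disjoint such edges occur); the hypothesis then supplies a prime $x\in\rho(R(G))\setminus\pi(S)$. Fix a matching edge $\pi(q-\epsilon_0)=\{p,p'\}$ with $p\in V_1$, $p'\in V_2$, and choose $\theta\in\mathrm{Irr}(R(G))$ with $x\mid\theta(1)$, setting $N:=I_G(\theta)/R(G)\leq S$. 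Running the Dickson-subgroup analysis exactly as in the proof of Lemma \ref{starlemma} --- using that $\mathrm{SL}_2(q)$ is the Schur cover of $S$, Lemma \ref{frob.psl} in the Frobenius case, and Zsigmondy's theorem in the subfield cases --- I would produce in each case a degree $m\in\mathrm{cd}(G\mid\theta)$ divisible by $x\cdot pp'$ (or by $x$ times the full $q^2-1$ part). Then $\{x,p,p'\}$ is a triangle of $\Delta(G)$ containing the matching edge $pp'$, which is impossible.

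I expect this Dickson case analysis to be the main obstacle: it must be re-run using the straddling matching edge in place of the convenient inclusion $\pi(q+\epsilon)\subseteq V_j$ of Lemma \ref{starlemma}, and one must verify that every Dickson type of $N$ yields a degree carrying $x$ together with enough of $q^2-1$ to cross the partition. Everything else is bookkeeping: the identification of $\Delta(S)$, the clique-location principle for $F(n)$, and the cardinality count $|V_i|=n\geq 4$ that drives the main-line reduction to Lemma \ref{star2}.
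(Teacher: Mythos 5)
Your reduction to Lemma~\ref{star2} (the first two paragraphs) is sound and is essentially the paper's own argument, just organized around a clique-location claim instead of the paper's case split on $\pi(|G:M|)$. The genuine gap is in the residual case, exactly at the step you flagged as "the main obstacle," and it cannot be repaired in the form you propose. Anchoring at a single prime $x\in\rho(R(G))\setminus\pi(S)$ with $x\mid\theta(1)$ and trying to produce a degree in $\mathrm{cd}(G|\theta)$ divisible by $x\cdot pp'$ fails for several Dickson types, because Clifford theory only guarantees divisibility by $\theta(1)\,|S:N|$ (plus what Lemma~\ref{frob.psl} gives), and $|S:N|$ need not involve the primes of the straddling edge at all. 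Concretely: take the sub-case $S\cong\mathrm{PSL}_2(16)$, where $\pi(q-1)=\{3,5\}$ is the straddling edge and $\pi(q+1)=\{17\}$ is a singleton, and suppose $N:=I_G(\theta)/R(G)$ is (contained in) the dihedral subgroup of order $30$. Then $|S:N|=136=2^3\cdot 17$, and if $\theta$ extends to $I_G(\theta)$, Gallagher gives $\mathrm{cd}(G|\theta)=\{136\,\theta(1)\beta(1):\beta\in\mathrm{Irr}(N)\}$, whose prime support is contained in $\{2,17\}\cup\pi(\theta(1))$. Nothing forces $3$ or $5$ to appear, and if $2$, $17$, $x$ all lie in one part $V_i$, the resulting clique sits inside a single side of $F(n)$ --- no contradiction. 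The same failure occurs for a Borel-type Frobenius $N$ with $\theta$ not extendible: Lemma~\ref{frob.psl} guarantees only divisibility by $2(q+1)\theta(1)$, i.e.\ the clique $\{2,17,x\}$ again. So "enough of $q^2-1$ to cross the partition" is precisely what you cannot always extract from your choice of $\theta$.

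The missing idea --- and the paper's actual move --- is to anchor at the matching edge through $x$, not at $x$ alone. Writing $x={a_i}_s$, its partner ${a_j}_s$ is adjacent to $x$ in $F(n)$, so some $\chi\in\mathrm{Irr}(G)$ satisfies ${a_i}_s{a_j}_s\mid\chi(1)$; take $\theta$ under $\chi$, and Lemma~\ref{11.29} forces ${a_i}_s\mid\theta(1)$ since ${a_i}_s\notin\pi(S)=\pi(|G:R(G)|)$. Because a matching edge of $F(n)$ lies in no triangle, it now suffices in each Dickson case to exhibit \emph{one} extra prime $y\notin\{{a_i}_s,{a_j}_s\}$ dividing $\chi(1)$ --- any prime, located anywhere in the graph --- since $\{{a_i}_s,{a_j}_s,y\}$ is then a forbidden triangle. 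This much weaker requirement is met in every Dickson case (in the dihedral example above, any $y\in\{2,17\}\setminus\{{a_j}_s\}$ works), and it is how the paper closes the residual case. A secondary point: to invoke Dickson's list you need $G/R(G)=S$ in the residual case, which your sketch establishes only when both $\pi(q\pm1)$ are straddling edges; when one of them is a singleton, the paper obtains $G/R(G)=S$ from \cite[Lemma $2.4$]{four} together with Lemma~\ref{index}, and some such argument is also needed in your write-up before you may assume $N\leq S$.
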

\begin{proof}
As $|\pi(S)|\geqslant 4$, there exists $\epsilon\in\{\pm 1\}$ such that $|\pi(q+\epsilon)|\geqslant 2$. First assume that $\pi(|G:M|)\setminus\pi(S)\neq \emptyset$. Then using Lemma \ref{index}, for some $j\in\{1,2\}$, $\pi(q+\epsilon )\cup \pi(|G:M|)\subseteq V_j$ and  either for some prime $v\in V_i$, we have  $\pi(q-\epsilon ) =\{v\}$ or $\pi(q-\epsilon ) \subseteq V_j$, where $\{i,j\}=\{1,2\}$. If the first case occurs, then as   $|\rho(G)|\geqslant 8$, using lemma \ref{star2} we have nothing to prove. Suppose the later case holds. Then $(\pi(S)\setminus\{u\})\cup\pi(|G:M|)\subseteq V_j$ and  as   $|\rho(G)|\geqslant 8$,  by Lemma \ref{star2}, we are done. Now let $\pi(|G:M|)\subseteq\pi(S)$. Then by assumption, for some $i\in\{1,2\}$, $X:=(\rho(R(G))\setminus\pi(S))\cap V_i\neq \emptyset$. Let  ${a_{i}}_{s}\in X$, for some $s\in\{1, 2, \ldots , n\}$. Suppose $\{i,j\}=\{1,2\}$.  Now one of the following cases occurs: \\
{\bf 1.}  $\pi(q+\epsilon)\subseteq V_j$. Then using Lemma \ref{index},  $\pi(q+\epsilon)\cup\pi(|G:M|)\subseteq V_j$ and so by Lemma \ref{star2},  $G$ does not exist. \\
{\bf 2.} $\pi(q+\epsilon)\subseteq V_i$. If $\pi(q-\epsilon )\subseteq V_j$ and $|\pi(q-\epsilon )|\geqslant 2$, then by Lemma \ref{index}, $\pi(q-\epsilon )\cup\pi(|G:M|)\subseteq V_j$ and using Lemma \ref{star2}, we have a contradiction. Hence as $|V_j|\geqslant 4$, in the remaining cases, using Lemma \ref{star2},  $G$ does not exist. \\
{\bf 3.} $\pi(q+\epsilon )\cap V_k\neq \emptyset$, for every $ k\in\{1,2\}$. Then $|\pi(q+\epsilon )|=2$. If $|\pi(q-\epsilon )|\geqslant 2$ and for some $k\in\{1,2\}$, $\pi(q-\epsilon )\subseteq V_k$, then using cases {\bf (a)} and {\bf (b)}, we have nothing to prove. Thus we can assume that either $|\pi(q-\epsilon )|=1$ or $|\pi(q-\epsilon )|=2$ and $\pi(q-\epsilon )\cap V_k\neq \emptyset$, for every $ k \in\{1,2\}$. Note that using {\rm(}\cite[Lemma $2.4$]{four}{\rm)} and Lemma \ref{index}, $\frac{G}{R(G)}=S$. By assumption, there exists $\chi\in\mathrm{Irr}(G)$ such that ${a_{i}}_{s} {a_{j}}_{s} | \chi(1)$. Let $\theta\in\mathrm{Irr}(R(G))$ be a constituent of $\chi_{R(G)}$. Using Lemma \ref{11.29}, ${a_{i}}_{s} | \theta(1)$. Let $I:=I_{G}(\theta)$ and $N:=\frac{I}{R(G)}$. We use Dickson's list for the structure of $N$. First suppose that $N=S$. Then as the Schur multiplier of $S$ is trivial, $\theta$ is extendible to $G$. Hence using Gallagher's theorem $\Delta(G)[\{{a_{i}}_{s}\}\cup \pi(q+\epsilon)]\cong K_3$, which is a  contradiction with the structure of $F(n)$. Hence $N\lneq S$ and one of the following cases occurs:\\
{\it Case 1) } $N$ is either an elementary abelian $2$-group, contained in a dihedral group, or a Frobenius group whose kernel is an elementary abelian $2$-group. If $N$ is a Frobenius group and $\theta$ is extendible to $I$, then using Lemma \ref{frob.psl}, $\Delta(G)[\{{a_{i}}_{s}\}\cup\pi(q+\epsilon)]\cong K_3$ which is impossible. Thus in the remaining cases, by Clifford's theorem and Lemma \ref{frob.psl}, we can choose  $y\in\pi(S)\setminus\{{a_{j}}_{s}\}$ such that ${a_{i}}_{s} {a_{j}}_{s} y | \chi(1)$. It is a contradiction with the structure of $F(n)$. \\
{\it Case 2) } $N\cong A_5$. Then as $\mathrm{SL}_{2}(5)$ is the Schur representation group of $A_5$, using Clifford's theorem, for some $m\in\mathrm{cd}(G|\theta)$, $m$ is divisible by $\theta(1) \frac{q(q^2-1)}{20}$. If $|\pi(S)|=5$, there exists  $y\in (\pi(q^2-1)\setminus\{5\})\cap V_j$  such that $2 {a_{i}}_{s} y | m$  which is a contradiction. Hence $|\pi(S)|=4$ and by {\rm(}\cite[Lemma $2.4$]{four}{\rm)}, $S\cong \mathrm{PSL}_{2}(16)$. Thus $X:=\{2,3,17\}\subseteq\pi(m)$. Hence by the structure of $F(n)$ there exists $k\in\{1,2\}$ such that $X\subseteq V_k$ and  using Lemma \ref{rovi}, we have a contradiction.\\
{\it Case 3) } $N\cong \mathrm{PSL}_{2}(2^m)$, where $m\neq f$ is a positive divisor of $f$ and $2^m \geqslant 8$. Thus using   {\rm(}\cite[Lemma $2.4$]{four}, \cite[Lemma $3.2$]{e4}{\rm)},     $|\pi(q\pm 1)|=2$. Then by Clifford's theorem, $l:= \theta(1)  2^{f-m} (2^f +1) \frac{(2^f-1)}{2^{m}-1}\in\mathrm{cd}(G|\theta)$. Hence $2 {a_{i}}_{s} (2^f+1) \, |\, l$. Thus $\Delta(G)[\pi(l)]$ is a complete graph with at least three vertices. It is a contradiction.
\end{proof}
The following Lemma completes the proof of Theorem \ref{forbidengraph}.
\begin{lemma}\label{even}
Let $q$ be even and $|\pi(S)|\geqslant 4$. If $\rho(G)=\pi(S)$, then $G$ does not exist.
\end{lemma}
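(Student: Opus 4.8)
The plan is to locate the sets $\pi(q-1)$, $\pi(q+1)$ and $\{2\}$ inside $F(n)$, then to kill the outer automorphisms, and finally to run the Clifford--Dickson analysis of Lemma \ref{starlemma} to rule out the forced matching edges.

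First I would pin down the combinatorics. Since $q$ is even, Lemma \ref{deltaS} says $\Delta(S)$ is the disjoint union of the three complete graphs on $\{2\}$, $\pi(q-1)$ and $\pi(q+1)$, and these sets partition $\pi(S)$. As $\rho(G)=\pi(S)=\rho(S)$, $\Delta(S)$ is a spanning subgraph of $\Delta(G)=F(n)$, so $\pi(q-1)$ and $\pi(q+1)$ are cliques of $F(n)$. Because $\omega(F(n))=n$ and $|\pi(S)|=2n$, while $|\pi(q-1)|+|\pi(q+1)|=2n-1$ with each summand at most $n$, I would conclude $\{|\pi(q-1)|,|\pi(q+1)|\}=\{n-1,n\}$; by Lemma \ref{cardgraph} we have $n\geq 4$, so both sets have at least three primes and hence each lies inside a single part. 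The size-$n$ clique must fill a whole part, say $V_1=\pi(q+\epsilon)$, and then $V_2=\pi(q-\epsilon)\cup\{2\}$ with $|\pi(q-\epsilon)|=n-1$, for a suitable $\epsilon\in\{\pm 1\}$.

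Next I would show $\frac{G}{R(G)}=S$. Since $f\geq 3$, Lemma \ref{index} gives $G$ irreducible characters of degrees $(q+1)|G:M|$ and $(q-1)|G:M|$. If a prime $\ell$ divided $|G:M|$, then $\ell\in\rho(G)=\pi(S)$, and the block behaviour of $F(n)$ forces a contradiction: if $\ell\in\pi(q+\epsilon)=V_1$ or $\ell=2$, the degree $(q-\epsilon)|G:M|$ joins $\ell$ to the entire clique $\pi(q-\epsilon)$, giving a vertex of $V_1$ (resp.\ the vertex $2$) at least $n-1\geq 3$ neighbours in $V_2$; symmetrically $\ell\in\pi(q-\epsilon)$ joins $\ell$ to all of $\pi(q+\epsilon)=V_1$. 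In every case a vertex acquires more than one neighbour across the matching, which is impossible in $F(n)$. Hence $|G:M|=1$ and $M=G$.

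The crux is then the Clifford--Dickson step. Now $2$ is matched in $F(n)$ to a unique $p_0\in\pi(q+\epsilon)$, so some $\chi\in\mathrm{Irr}(G)$ has $2p_0\mid\chi(1)$. Choosing $\theta\in\mathrm{Irr}(R(G))$ under $\chi$ and setting $N:=\frac{I_G(\theta)}{R(G)}$, I would first rule out $N=S$: as the Schur multiplier of $S$ is trivial, $\theta$ extends to $G$, and Gallagher's theorem yields the degrees $\theta(1)(q-1)$, $\theta(1)q$, $\theta(1)(q+1)$; since the prime divisors of $\theta(1)$ lie in $\rho(R(G))\subseteq\pi(S)$, the argument of the previous paragraph forces $\theta(1)=1$, so $\chi(1)\in\{q-1,q,q+1\}$, none of which is divisible by $2p_0$. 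Thus $N\lneq S$, and I would run Dickson's list for $N$ exactly as in the proof of Lemma \ref{starlemma} (elementary abelian, contained in a dihedral group, Frobenius via Lemma \ref{frob.psl}, $A_5$, $A_4/S_4$, and subfield $\mathrm{PSL}_2(2^m)$), each time invoking Zsigmondy's theorem to produce a degree in $\mathrm{cd}(G\,|\,\theta)$ divisible by $2$, by $p_0$ and by a further prime of $\pi(S)$ --- equivalently, a degree carrying $p_0$ together with the full block $\pi(q-\epsilon)$. Such a degree yields either a triangle straddling $V_1$ and $V_2$ or a vertex with too many cross-matching neighbours, contradicting the structure of $F(n)$. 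The main obstacle is precisely this case analysis: one must verify that every admissible $N$ splits the factors $q\pm 1$ in a way that unavoidably drags in an extra prime, so that the single matching edge at $2$ can never be realised in isolation.
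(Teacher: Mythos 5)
Your skeleton is the paper's own: pin the partition down to $V_i=\{2\}\cup\pi(q+\epsilon)$ and $V_j=\pi(q-\epsilon)$, force $G/R(G)=S$, dispose of $N=S$ via the trivial Schur multiplier plus Gallagher, then run Dickson's list for $N\lneq S$. The combinatorial step and the $N=S$ step are correct. There is one small slip in the middle step: when $\ell=2$ divides $|G:M|$, the degree $(q-\epsilon)|G:M|$ only joins $2$ to $\pi(q-\epsilon)$, which lies in the \emph{same} part as $2$, so no contradiction arises as you state it; you must instead use the other degree $(q+\epsilon)|G:M|$, which joins $2$ to all $n\geqslant 4$ vertices of the opposite part, or argue as the paper does that the two cliques $\pi(q\pm 1)\cup\pi(|G:M|)$ share the vertex $\ell$ and hence force $\pi(q^2-1)\cup\pi(|G:M|)$, a set of size at least $2n-1$, into a single part of size $n$.

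The genuine gap is the final case analysis, which you delegate to ``exactly as in the proof of Lemma \ref{starlemma}.'' That proof cannot be imported here: it is driven throughout by a prime ${a_{i}}_{s}\in\rho(R(G))\setminus\pi(S)$ dividing $\theta(1)$, and under the present hypothesis $\rho(R(G))\subseteq\rho(G)=\pi(S)$ no such prime exists (moreover its cases $A_4$ and $S_4$ require $8\mid q^2-1$, which is impossible for even $q$, so they do not occur here). Your stated target is also not always attainable: a degree in $\mathrm{cd}(G\,|\,\theta)$ divisible by $2$, by $p_0$ and by a third prime need not exist; for instance if $N\cong A_5$ and the matching happens to pair $2$ with $p_0=5$, the Clifford degree $\theta(1)q(q^2-1)/20$ need not be divisible by $5$ at all. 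What actually closes these cases — and is what the paper does — is a $p_0$-free mechanism: for each proper $N$ in Dickson's list (elementary abelian, contained in a dihedral group, Frobenius handled by Lemma \ref{frob.psl}, $A_5$, or a subfield group $\mathrm{PSL}_2(2^m)$ handled by Zsigmondy), some member of $\mathrm{cd}(G|\theta)$ is divisible simultaneously by primes of $\pi(q-1)$ and by primes of $\pi(q+1)$, hence by vertices lying in both parts of $F(n)$; any such degree produces a vertex with at least two neighbours across the matching (equivalently, a triangle meeting both parts), which is impossible. So the architecture of your plan is right, but its last third must be reworked around this divisibility-across-both-blocks argument rather than cited from Lemma \ref{starlemma}.
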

\begin{proof}
First we claim that $|\pi(q\pm 1)|\geqslant 3$. On the contrary suppose  $|\pi(q+\epsilon)|\leqslant 2$, for some $\epsilon\in\{\pm 1\}$. Then as $|\rho(G)|\geqslant 8$, we have $|\pi(q-\epsilon)|\geqslant 5$, and so $\pi(q-\epsilon)\subseteq V_i$ and  $V_j\subseteq\{2\}\cup\pi(q+\epsilon)$, where $\{i,j\}=\{1,2\}$.  It is a contradiction.  Hence $|\pi(q\pm 1)|\geqslant 3$, as we claimed. If $\pi(|G:M|)\neq \emptyset$, then using Lemma \ref{index}, there exists $k\in\{1,2\}$ such that $\pi(q^2-1)\cup\pi(|G:M|)\subseteq V_k$. Thus as $\rho(G)=\pi(S)$ and $|\rho(G)|\geqslant 8$, we have a contradiction. Therefore $\pi(|G:M|)=\emptyset$.  Using Lemma \ref{claim}, for some $i\in\{1,2\}$, $V_i=\{2\} \cup\pi(q+\epsilon) $ and $V_j=\pi(q-\epsilon)$. Let ${a_{i}}_{s}\in V_i$, for some  $s\in\{1, 2, \ldots , n\}$. Then for some $\chi\in\mathrm{Irr}(G)$, ${a_{i}}_{s} {a_{j}}_{s}\, |\, \chi(1)$. Let $\theta\in\mathrm{Irr}(R(G))$ be a constituent of $\chi_{R(G)}$. Also let $I:=I_{G}(\theta)$ and $N:=\frac{I}{R(G)}$. Now we consider Dickson's list on the structure of $N$. First assume that $N=S$. Then as the Schur multiplier of $S$ is trivial, $\theta$ is extendible to $G$. Thus $\mathrm{cd}(G|\theta)=\{\theta(1)\, m\, |\, m\in\mathrm{cd}(S)\}$. Therefore by the structure of $F(n)$, $\theta(1)=1$ and $\mathrm{cd}(G|\theta)=\mathrm{cd}(S)$. This is a contradiction as $\chi(1)\in\mathrm{cd}(G|\theta)$. Hence $N\lneq S$ and   one of the following cases holds:\\
{\it Case 1)} $N$ is either an elementary abelian $2$-group, contained in a dihedral group, or a Frobenius group whose kernel is an elementary abelian $2$-group. If $N$ is a Frobenius group and $\theta$ is extendible to $I$, then using Lemma \ref{frob.psl}, $\Delta(G)[\pi(S)\setminus\{2\}]$ is a complete graph with at least three vertices. It is a contradiction. Then using Clifford's theorem and Lemma \ref{frob.psl},  ${a_{i}}_{s} {a_{j}}_{s} y | \chi(1)$, for some $y\in \pi(S)\setminus \{{a_{i}}_{s},{a_{j}}_{s}\}$ which is a contradiction with the structure of $F(n)$. \\
{\it Case 2) } $N\cong A_5$. Then as $\mathrm{SL}_{2}(5)$ is the Schur representation group of $A_5$, using Clifford's theorem, for some $m\in\mathrm{cd}(G|\theta)$, $m$ is divisible by $\theta(1) \frac{q(q^2-1)}{20}$. Hence there exists $y\in (\pi(q-\epsilon)\setminus\{5\})\cap V_j$ such that $\Delta(G)[\{y\}\cup\pi(q+\epsilon)]$ is a complete graph with at least three vertices. It is a contradiction with the structure of $F(n)$.\\
{\it Case 3) } $N\cong \mathrm{PSL}_{2}(2^m)$, where $m\neq f$ is a positive divisor of $f$ and $2^m \geqslant 8$. Then using Clifford's theorem, there exists $l:=\theta(1)  2^{f-m}   (2^f +1)  \frac{2^f-1}{2^{m}-1}\in\mathrm{cd}(G|\theta)$. Now as $m|f$, using Zsigmondy's theorem there exists $y\in \pi\Big(\frac{2^f-1}{2^{m}-1} \Big)$  such that  $\Delta(G)[2y(2^f+1)]$ is a complete graph with at least three vertices. This is again a contradiction and completes the proof.
\end{proof}

\section{Proof of  Theorem $A$}
We begin this section with the following observation.
\begin{lemma}\label{parta}
Let $G$ be a finite group with disconnected regular character-graph. Then $\Delta(G)$ is isomorphic to either $\overline{K_{3}}$ or $\overline{K_2}$.
\end{lemma}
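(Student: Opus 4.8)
The plan is to reduce, via Lemma \ref{akh.reg}, to a single troublesome shape and then eliminate it. If $|\rho(G)|\leq 2$, a disconnected graph on at most two vertices must be $\overline{K_2}$, and we are done. So assume $|\rho(G)|\geq 3$. Since $\Delta(G)$ is regular, Lemma \ref{akh.reg} leaves two possibilities: either $\Delta(G)\cong\overline{K_3}$, which is already one of the desired conclusions, or $\overline{\Delta(G)}$ is bipartite. Suppose the latter, with bipartition $\rho(G)=A\,\dot{\cup}\,B$. Then $A$ and $B$ are independent in $\overline{\Delta(G)}$, hence cliques in $\Delta(G)$. As each of $A,B$ spans a connected (complete) subgraph and $\Delta(G)$ is disconnected, there can be no edge between $A$ and $B$ and both are nonempty; thus $\Delta(G)$ is the disjoint union of the complete graphs $\Delta(G)[A]$ and $\Delta(G)[B]$. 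Regularity forces $|A|=|B|=:m$, and $|\rho(G)|=2m\geq 3$ gives $m\geq 2$. It therefore suffices to prove that $\Delta(G)\cong K_m\,\dot{\cup}\,K_m$ with $m\geq 2$ cannot occur.

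I first dispose of the solvable case. If $G$ is solvable and $\Delta(G)$ is disconnected, P\'alfy's inequality asserts that the two connected components, of sizes $n_1\geq n_2$, satisfy $n_1\geq 2^{n_2}-1$. For two components of equal size $m$ this reads $m\geq 2^{m}-1$, which fails for every $m\geq 2$; hence no solvable group realizes $K_m\,\dot{\cup}\,K_m$. (For $m\geq 3$ one may instead quote \cite[Theorem $A$]{morre.reg}, since a $k$-regular solvable character-graph with $k\geq 2$ is complete or a cocktail party graph, both connected.)

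Now assume $G$ is nonsolvable and write $\rho(G)=V_1\,\dot{\cup}\,V_2$ for the two cliques, $|V_1|=|V_2|=m\geq 2$. The absence of edges between $V_1$ and $V_2$ means $\Delta(G)$ contains no triangle meeting both cliques, which is exactly the feature of $F(n)$ exploited in Section \ref{sec3}; consequently the arguments of Lemmas \ref{claim} and \ref{almost} apply verbatim, and $G/R(G)$ is almost simple with socle $S:=M/R(G)$. By Lemma \ref{simcutedge}, $\Delta(S)$ is disconnected only if $S\cong\mathrm{PSL}_2(q)$; in every other case $\Delta(S)$ is connected, so the subgraph $\Delta(S)\subseteq\Delta(G)$ lies inside a single clique, say $\pi(S)\subseteq V_i$. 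Arguing as in Lemma \ref{claim} — using Lemma \ref{notri} and Gallagher's theorem to push every prime of $\rho(R(G))\setminus\pi(S)$ into $V_i$ as well — we obtain $V_j\subseteq\pi(\mathrm{Out}(S))\setminus\pi(S)$, whence Proposition B yields the contradiction $m=|V_j|<|\pi(S)|\leq|V_i|=m$.

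The remaining and hardest case is $S\cong\mathrm{PSL}_2(q)$, and I expect it to be the main obstacle. Here Lemma \ref{deltaS} shows that $\Delta(S)$ always has a singleton component (the vertex $u$ when $q$ is odd, or $2$ when $q$ is even) alongside the complete component(s). In the regular graph $K_m\,\dot{\cup}\,K_m$ every vertex has degree $m-1\geq 1$, so the task is to show that the primes of the solvable radical cannot simultaneously absorb this singleton, keep the two cliques separated, and preserve the balance $|V_1|=|V_2|$. I would resolve this by the same Clifford-theoretic casework on the structure of an inertia subgroup modulo $R(G)$ via Dickson's list (Lemmas \ref{index} and \ref{frob.psl}) employed throughout Section \ref{sec3}: in each configuration one either produces a character degree divisible by three primes lying on both sides of the partition — a triangle meeting $V_1$ and $V_2$, which is forbidden — or forces a vertex of degree strictly below $m-1$, contradicting regularity. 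This eliminates $K_m\,\dot{\cup}\,K_m$ and completes the proof.
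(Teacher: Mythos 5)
Your reduction to $\Delta(G)\cong K_m\,\dot{\cup}\,K_m$ with $m\geq 2$ and your handling of the solvable case via P\'alfy's inequality are correct and coincide with the paper, which cites \cite[Theorem 3]{palfy} for exactly that step. The genuine gap is the nonsolvable case. The paper disposes of it in one line: a nonsolvable group with disconnected character-graph cannot have a regular character-graph by \cite[Theorem 6.3]{l6} (Lewis--White, connectedness of degree graphs of nonsolvable groups), so $G$ must be solvable and P\'alfy finishes. You instead propose to rebuild the forbidden-graph machinery of Section \ref{sec3}. The first half of that plan is sound, and in fact easier than in the paper: a connected $\Delta(S)$ must lie inside a single component of the disconnected graph $\Delta(G)$, after which the Proposition B argument of Lemma \ref{claim} gives the contradiction. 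But the decisive case $S\cong \mathrm{PSL}_2(q)$ --- the only simple groups whose character-graph is disconnected, hence the only ones whose prime set can straddle both components --- is never actually proved. ``I expect it to be the main obstacle'' and ``I would resolve this by the same Clifford-theoretic casework'' constitute a plan, not an argument; in the paper the analogous casework for $F(n)$ occupies Lemmas \ref{starlemma} through \ref{even}, several pages of delicate analysis via Dickson's list, and there is no reason to accept that it transfers without comparable work being exhibited.

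There is a second defect in the claim that the Section \ref{sec3} arguments ``apply verbatim'': your statement must cover all $m\geq 2$, hence also $|\rho(G)|=4$ and $|\rho(G)|=6$, whereas Section \ref{sec3} operates under the standing hypotheses $n\geq 3$ and $|\rho(G)|\geq 8$ (Lemma \ref{cardgraph}), which several of its lemmas (e.g.\ Lemmas \ref{rovi} and \ref{k3graph}, and through them the later ones) use essentially. So for $K_2\,\dot{\cup}\,K_2$ and $K_3\,\dot{\cup}\,K_3$ none of the quoted machinery is available as stated, and these small cases are not addressed at all in your proposal. The repair is simply to replace the entire nonsolvable discussion by the citation the paper uses: regularity together with disconnectedness is incompatible with nonsolvability by \cite[Theorem 6.3]{l6}.
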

\begin{proof}
If $|\rho(G)|\leqslant 2$, then $\Delta(G)\cong \overline{K_2}$ and we are done. Now assume that $|\rho(G)|\geqslant 3$. Using Lemma \ref{akh.reg}, $\Delta(G)\cong \overline{K_3}$ or $\Delta(G)$ has precisely two complete connected components. If the first case occurs, we have nothing to prove. Suppose the later case holds.  Then as $\Delta(G)$ is regular, by  \cite[Theorem $6.3$]{l6}, $G$ is a solvable group. Hence using \cite[Theorem $3$]{palfy}, $\Delta(G)\cong \overline{K_{2}}$, which is a contradiction. This completes the proof.
\end{proof}
Now we are ready to prove our main theorem.
\begin{proof}[Proof of Theorem A]
Let $G$ be a finite group with $k$-regular character-graph $\Delta(G)$, for some integer $k\geqslant 2$. Also let the eigenvalues of $\Delta(G)$ be in the interval $[-2,\infty )$. As $k\geqslant 2$, using Lemma \ref{parta}, 
 we assume that $\Delta(G)$ is connected. Then Lemma \ref{akh.reg} implies that $\overline{\Delta(G)}$ is bipartite. If $\Delta(G)$ is an exceptional graph, then by Lemma \ref{exceptional2}, $\overline{\Delta(G)}$ is non-bipartite which is a contradiction. Hence using \cite{cameron} and Lemma 
  \ref{reg.gen.con}, $\Delta(G)$ is either a cocktail party graph or a line graph. If $\Delta(G)$ is a cocktail party graph, then we are done. Thus we can assume that $\Delta(G)=L(\Gamma)$, for some graph $\Gamma$.
 Hence as $\Delta(G)$ is connected, the graph $\Gamma$ is connected. Also Lemma \ref{lineh} implies that  $\Gamma$ has no subgraph isomorphic to $C_{5}$ and $\alpha^{'}(\Gamma)\leqslant 2$. Now by Lemma \ref{reg.lin.con}, one of the following cases occurs:\\
{\it Case 1) } For some positive integer $r$, the  graph $\Gamma$ is $r$-regular. If $\alpha^{'}(\Gamma)=1$, then as $k\geqslant 2$, $\Delta(G)=L(\Gamma)\cong K_3$. 
 Suppose  $\alpha^{'}(\Gamma)=2$ and $m:=|V(\Gamma)|$. Also let $X:=\{x_1, x_2,x_3,x_4\}$ be the set of vertices of a maximum matching in $\Gamma$ with edges $x_1 x_2$ and $x_3 x_4$. Then  $4r=\sum_{i=1}^{4} deg (x_i )\geqslant \sum_{v\in V(\Gamma)\setminus X}\, deg (v)= (m-4)r$. Thus $ 4 \leqslant m\leqslant 8$. Note that  $\Gamma\setminus X$ is an empty graph and hence  $2 \leqslant r\leqslant 4$. If $r=2$, then using the structure of $\Gamma$, we deduce that $\Gamma\cong C_4 \cong \Delta(G)$. Thus we can assume that $r=3$ or $4$. Now one of the following cases occurs: \\
{\bf 1.} $m=8$. Then $|V(\Gamma)\setminus X|=4$. Note that every vertex $v\in V(\Gamma)\setminus X$ is adjacent to $r$ vertices in $X$. Thus there exists a vertex $x_{i}\in X$, for some $1 \leqslant i \leqslant 4$ such that $deg (x_{i})\geqslant r$ and it is a contradiction.\\
{\bf 2.} $m=7$. Then $r=4$ and $|V(\Gamma)\setminus X|=3$. Thus we obtain $\alpha^{'}(\Gamma)=3$ which is again a contradiction.\\
{\bf 3.} $m=6$. Then $|V(\Gamma)\setminus X|=2$. Hence as every vertex  $v\in V(\Gamma)\setminus X$ is adjacent to $r$ vertices in $X$, in all cases, $\Gamma$ has a subgraph isomorphic to $C_5$ which is impossible. \\
{\bf 4.} $m=5$. Then  $r=4$ and $\Gamma\cong K_5$. It is a contradiction.  \\
{\bf 5.} $m=4$. Then $r=3$ and $\Delta(G)=L(\Gamma)\cong L(K_4)\cong \mathrm{CP}(3)$. This completes the proof in this case. \\
{\it Case 2) } The  graph $\Gamma$ is a semi-regular bipartite graph with parameters $(n_1, n_2, r_1, r_2)$ where $n_1  r_1 = n_2  r_2$. Without loss of generality, assume that $1 \leqslant n_{1}\leqslant n_{2}$. Let $\alpha^{'}(\Gamma)=1$. Then it is clear that  $n_1=1$ and  $\Gamma\cong K_{1,n_2}$. Thus as $k\geqslant 2$, $\Delta(G)=L(\Gamma)\cong K_{n_2}$ where $n_2\geqslant 3$. Hence we can assume that $\alpha^{'}(\Gamma)=2$. Thus  $n_1\geqslant 2$. Let $n_1=2$. If $n_2\geqslant 3$, then we can see that $\Delta(G)=L(\Gamma)\cong L(K_{2, n_2})\cong F(n_2)$, which is a contradiction with Theorem \ref{forbidengraph}. Thus $n_2=2$ and $\Delta(G)=L(\Gamma)\cong \mathrm{CP}(2)$. If $n_1\geqslant 3$, then as $\Gamma$ is a connected semi-regular bipartite graph, we deduce that $\alpha^{'}(\Gamma)\geqslant 3$, which is a contradiction. This completes the proof.
\end{proof}

\bibliographystyle{amsplain}

\end{document}